\newtheorem{theorem}{Theorem}[section]
\newtheorem{lemma}[theorem]{Lemma}
\newtheorem{proposition}[theorem]{Proposition}
\theoremstyle{definition}
\theoremstyle{remark}
\newtheorem{remark}[theorem]{Remark}
\numberwithin{equation}{section}
\newcommand\R{\mathbb{R}}
\newcommand\C{\mathbb{C}}
\newcommand\E{\mathbb{E}}
\newcommand\nl{\operatorname{nlin}}
\newcommand\lin{\operatorname{lin}}
\begin{document}

\title[Quantitative bounds for Navier-Stokes]{Quantitative bounds for critically bounded solutions to the Navier-Stokes equations}

\author{Terence Tao}
\address{UCLA Department of Mathematics, Los Angeles, CA 90095-1555}
\email{tao@math.ucla.edu}
\thanks{The author is supported by NSF grant DMS-1266164 and by a Simons Investigator Award.  We also thank Stan Palasek and Jiayan Wu for corrections.}

\subjclass{Primary 35Q35, 37N10, 76B99}
\date{}


\keywords{Navier-Stokes, blowup criterion}

\begin{abstract}
We revisit the regularity theory of Escauriaza, Seregin, and \v{S}ver\'ak for solutions to the three-dimensional Navier-Stokes equations which are uniformly bounded in the critical $L^3_x(\R^3)$ norm.  By replacing all invocations of compactness methods in these arguments with quantitative substitutes, and similarly replacing unique continuation and backwards uniqueness estimates by their corresponding Carleman inequalities, we obtain quantitative bounds for higher regularity norms of these solutions in terms of the critical $L^3_x$ bound (with a dependence that is triple exponential in nature).  In particular, we show that as one approaches a finite blowup time $T_*$, the critical $L^3_x$ norm must blow up at a rate $(\log\log\log \frac{1}{T_*-t})^c$ or faster for an infinite sequence of times approaching $T_*$ and some absolute constant $c>0$.
\end{abstract}

\maketitle

\section{Introduction}

This paper is concerned with quantitative bounds for solutions $u: [0,T] \times \R^3 \to \R^3$, $p: [0,T] \times \R^3 \to \R$ to the Navier-Stokes equations
\begin{equation}\label{ns}
\begin{split}
\partial_t u + (u \cdot \nabla) u &= \Delta u - \nabla p\\
\nabla \cdot u &= 0.
\end{split}
\end{equation}
Here we have normalised the viscosity to equal one for simplicity.
To avoid technicalities, we shall restrict attention to \emph{classical} solutions, by which we mean solutions that are smooth and such that all derivatives of $u, p$ lie in the space $L^\infty_t L^2_x([0,T] \times \R^3)$.  As our bounds are quantitative and do not depend on any smooth norms of the solution, it is possible to extend the results here to weaker notions of solution, such as mild solutions of Kato \cite{kato}, the weak Leray-Hopf solutions studied in \cite{ess}, or the suitable weak solutions from \cite{ckn}, by using the regularity theory of such solutions; we leave the details to the interested reader.  As is well known, such solutions have a maximal Cauchy development $u: [0,T_*) \times \R^3 \to \R^3$, $p: [0,T_*) \times \R^3 \to \R$ for some $0 < T_* \leq \infty$, with the restriction to $[0,T] \times \R^3$ a classical solution for all $T < T_*$, but for which no smooth extension to time $T_*$ is possible if $T_* < \infty$.  We refer to $T_*$ as the \emph{maximal time of existence} of such a classical solution.

The Navier-Stokes system enjoys the scaling symmetry $(u,p,T) \mapsto (u^\lambda,p^\lambda,\lambda^2 T)$ for any $\lambda>0$, where
$$ u^\lambda(t,x) \coloneqq \lambda u(\lambda^2 t, \lambda x)$$
and
$$ p^\lambda(t,x) \coloneqq \lambda^2 p(\lambda^2 t, \lambda x),$$
Among other things, this means that the norm
$$ \| u \|_{L^\infty_t L^3_x([0,T] \times \R^3)}$$
is scale-invariant (or \emph{critical}) for this equation.  In \cite{ess} it was shown that as long as this norm stays bounded, solutions to Navier-Stokes remain regular.  In particular, they showed an endpoint of the classical Prodi-Serrin-Ladyshenskaya blowup criterion \cite{prodi}, \cite{serrin}, \cite{lady-0} or the Leray blowup criterion \cite{leray}:

\begin{theorem}[Qualitative blowup criterion]\label{opp} \cite{ess} Suppose $(u,p)$ is a classical solution to Navier-Stokes whose maximal time of existence $T_*$ is finite.  Then
$$ \limsup_{t \to T_*^+} \| u(t) \|_{L^3_x(\R^3)} = +\infty.$$
\end{theorem}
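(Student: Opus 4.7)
The plan is to argue by contradiction: suppose there exists $M < \infty$ with $\|u(t)\|_{L^3_x(\R^3)} \leq M$ for all $t \in [0, T_*)$, and derive a contradiction with the assumption that no smooth extension past $T_*$ exists. Since the solution is assumed to be singular at $T_*$, the local $\varepsilon$-regularity theory for Navier-Stokes (in the style of Caffarelli--Kohn--Nirenberg, or its scale-invariant variants adapted to $L^3$) forces a concentration of critical $L^3$ mass near some point $(T_*, x_*)$: there must exist a sequence of space-time points $(t_n, x_n)$ with $t_n \to T_*^-$ and $x_n \to x_*$, together with scales $\lambda_n \to 0^+$, such that the rescaled fields
$$ u_n(t,x) \coloneqq \lambda_n u(t_n + \lambda_n^2 t, x_n + \lambda_n x) $$
satisfy a scale-invariant nontriviality lower bound at the origin (for instance, a uniform lower bound on local $L^3$ mass in a parabolic ball of unit radius), while still inheriting the uniform bound $\|u_n(t)\|_{L^3_x(\R^3)} \leq M$.

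Next, I would extract a subsequential limit $\bar u$ using parabolic smoothing (available away from concentration points) combined with the uniform $L^3$ control. The limit would be a smooth, mild, suitable \emph{ancient} solution to Navier-Stokes on $(-\infty, 0] \times \R^3$ with $\|\bar u(t)\|_{L^3_x(\R^3)} \leq M$ and a nontriviality condition at the origin inherited from the rescaled sequence, so that $\bar u \not\equiv 0$.

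The main obstacle, and the technical heart of the proof, is a Liouville-type rigidity theorem asserting that any such ancient solution must in fact vanish identically. Following Escauriaza--Seregin--\v{S}ver\'ak, I would pass to the vorticity $\bar\omega \coloneqq \nabla \times \bar u$ (which avoids the nonlocality of the pressure) and use the global $L^3$ bound, together with standard decay estimates for mild solutions, to obtain pointwise decay of $\bar u$ and $\bar\omega$ at spatial infinity for each $t \leq 0$. A \emph{spatial} Carleman inequality of Hardy type, applied to the parabolic system satisfied by $\bar\omega$, then upgrades this decay to identical vanishing of $\bar\omega$ outside a large ball at the terminal time $t = 0$. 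A \emph{backward uniqueness} Carleman inequality (in the spirit of Agmon--Nirenberg) then propagates this vanishing backward in time on the exterior of a parabolic region, and a further application of unique continuation forces $\bar\omega \equiv 0$ throughout $(-\infty,0] \times \R^3$; this exhibits $\bar u$ as a divergence-free harmonic vector field lying in $L^3$, so $\bar u \equiv 0$, contradicting nontriviality.

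The two Carleman inequalities are the deep analytic input and constitute the hardest part; the remainder is a combination of compactness, scaling, and standard $\varepsilon$-regularity. I anticipate that to prove the quantitative version advertised in the abstract, the present paper will replace the qualitative compactness step (extraction of $\bar u$) with an effective pigeonholing-plus-scaling argument, and replace both the unique continuation and backward uniqueness theorems by their underlying Carleman estimates used with explicit parameters, so that each reduction is accompanied by a quantitative loss — the compounding of these losses being the source of the triple-logarithmic bound.
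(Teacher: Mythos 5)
Your outline is essentially the original compactness proof of Escauriaza--Seregin--\v{S}ver\'ak, which is \emph{not} the route this paper takes: here Theorem \ref{opp} is cited from \cite{ess} and is only reproved as a corollary of the strictly stronger quantitative statements (Theorems \ref{main-1} and \ref{main-2}), whose proof deliberately avoids every step of your second and third paragraphs. In place of rescaling, subsequential limits, an ancient solution, and a Liouville theorem, the paper runs a direct scheme on the solution itself: a frequency concentration $|P_{N_0}u(t_0,x_0)|\gtrsim A_1^{-1}N_0$ is propagated backwards in time (Proposition \ref{basic}(iv),(v)) into epochs and annuli of regularity (Proposition \ref{basic}(iii),(vi)), and the two Carleman inequalities (Propositions \ref{carl-first} and \ref{carl-second}) are applied with explicit parameters to the actual vorticity on those regions, producing lower bounds for $\int|u(t_0,x)|^3$ on many disjoint annuli whose sum contradicts the $L^3$ bound unless $TN_0^2\leq\exp\exp\exp(A^{O(1)})$ (Theorem \ref{main-est}). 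Your last paragraph correctly anticipates this, and the trade-off is exactly as you guess: the compactness route gives no rate, while the quantitative route pays a triple exponential.

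Taken on its own terms as a proof of the qualitative theorem, your sketch has one step that would fail as written: you claim that a ``spatial Carleman inequality of Hardy type'' upgrades pointwise \emph{decay} of $\bar\omega$ at spatial infinity to \emph{identical vanishing} outside a large ball at $t=0$. No Carleman or unique continuation estimate can do this --- decay, however fast, never forces vanishing (consider a Gaussian), and backward uniqueness needs genuinely zero data at the terminal time as input. In \cite{ess} this vanishing comes from an entirely different mechanism: the terminal data $u(T_*,\cdot)$ is a fixed $L^3_x(\R^3)$ function, so its rescalings $\lambda_n u(T_*,x_n+\lambda_n\cdot)$ have local $L^3$ mass $\int_{B(x_n,\lambda_n R)}|u(T_*)|^3\to 0$ by absolute continuity of the integral as $\lambda_n\to 0$, whence the blow-up limit satisfies $\bar u(0,\cdot)=0$ identically. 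Only then does the backward uniqueness Carleman machinery of \cite{ess-back} (applied to $\bar\omega$ in an exterior region where CKN regularity gives boundedness) have something to propagate, followed by spatial unique continuation to conclude $\bar\omega\equiv 0$. With that substitution your argument becomes the standard one; the rest of your sketch (nontriviality from $\eps$-regularity, reduction to the vorticity, harmonicity and the $L^3$ Liouville step) is correct in outline.
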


There are now many proofs, variants and generalisations \cite{ess}, \cite{kk}, \cite{gkk}, \cite{gkk2}, \cite{seregin}, \cite{phuc}, \cite{gip} \cite{dong}, \cite{albritton}, \cite{barker}, \cite{ss}, \cite{wz} of this theorem, including extensions to higher dimensions or other domains than Euclidean spaces, replacing $L^3$ with another critical Besov or Lorenz space, or replacing the limit superior by a limit.  However, in contrast to the more quantitative arguments of Leray, Prodi, Serrin and Ladyshenskaya, the proofs in the above references all rely at some point on a compactness argument to extract a limiting profile solution to which qualitative results such as unique continuation and backwards uniqueness for heat equations (as established in particular in \cite{ess-back}) can be applied.  As such, the above proofs do not easily give any quantitative rate of blowup for the $L^3$ norm.

On the other hand, the proofs of unique continuation and backwards uniqueness rely on explicit Carleman inequalities which are fully quantitative in nature.  Thus, one would expect it to be possible, at least in principle, to remove the reliance on compactness methods and obtain a quantitative version of Theorem \ref{opp}.  This is the purpose of the current paper.  More precisely, in Section \ref{final-sec} we will establish the following two results:

\begin{theorem}[Quantitative regularity for critically bounded solutions]\label{main-1}  Let $u: [0,T] \times \R^3 \to \R^3$, $p: [0,T] \times \R^3 \to \R$ be a classical solution to the Navier-Stokes equations with
\begin{equation}\label{u3}
 \| u \|_{L^\infty_t L^3_x([0,T] \times \R^3)} \leq A 
\end{equation}
for some $A \geq 2$.  Then we have the derivative bounds
$$ |\nabla^j_x u(t,x)| \leq \exp\exp\exp(A^{O(1)})  t^{-\frac{j+1}{2}}$$
and
$$ |\nabla^j_x \omega(t,x)| \leq \exp\exp\exp(A^{O(1)})  t^{-\frac{j+2}{2}}$$
whenever $0 < t \leq T$, $x \in \R^3$, and $j = 0,1$, where $\omega \coloneqq \nabla \times u$ is the vorticity field.  (See Section \ref{notation-sec} for the asymptotic notation used in this paper.)
\end{theorem}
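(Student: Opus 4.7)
The plan is to follow the qualitative strategy of Escauriaza, Seregin, and \v{S}ver\'ak that underlies Theorem \ref{opp}, but to replace every compactness argument with a quantitative substitute and every appeal to unique continuation or backwards uniqueness by the underlying Carleman inequality, carefully tracking the resulting constants.

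First I would reduce, via the scaling symmetry, to establishing the bounds at $t = T = 1$. The subcritical local well-posedness theory, combined with the hypothesis $\|u\|_{L^\infty_t L^3_x} \le A$, yields short-time propagation of higher Sobolev regularity and ``epochs of regularity'' on which the solution is already smooth with bounds polynomial in $A$. The goal is then to upgrade these scattered pockets of regularity to a uniform pointwise bound at time $1$, by ruling out concentration of $u$ and $\omega$ at any spatial point.

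Next I would argue by contradiction: suppose $|\nabla^j u(1, x_0)|$ is much larger than $\exp\exp\exp(A^{O(1)})$ for some $x_0 \in \R^3$ and $j \in \{0,1\}$. By subcritical regularity, such a large derivative forces a concentration ``bubble'' for $u$, meaning a small scale $r_0$ around $(1,x_0)$ on which $|u|$ is of order $r_0^{-1}$, which therefore carries a definite amount of critical $L^3_x$ mass (bounded below by an absolute constant). The next step is to propagate this concentration backwards in time. Here Carleman inequalities replace the qualitative backwards uniqueness and unique continuation of \cite{ess}: viewing the vorticity equation $\partial_t \omega - \Delta \omega = (\omega \cdot \nabla) u - (u \cdot \nabla) \omega$ as a heat equation with drift $u$, one uses quantitative Carleman estimates to convert the largeness of $\omega$ on a bubble into pointwise lower bounds on $\omega$ in earlier space-time regions, thereby forcing a new concentration bubble at an earlier time, at the cost of a factor exponential in $A$.

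The main obstacle is that the standard Carleman estimates for a heat equation require a \emph{pointwise} bounded drift, whereas here $u$ is controlled only in $L^3_x$. To apply them I would first excise the small spatial set where $|u|$ is abnormally large; the $L^3$ hypothesis bounds the volume of this bad set, but the excision and the necessary re-localizations are the ultimate source of the tower of exponentials. The backward iteration terminates because each bubble consumes a definite amount of $L^3_x$ mass, so at most $A^{O(1)}$ bubbles can fit inside the bound $\|u(t)\|_{L^3_x} \le A$ at any time; compounding $A^{O(1)}$ such iterations of a single-exponential Carleman loss then yields the triple-exponential dependence claimed. The vorticity estimates follow in parallel, with one extra derivative cost throughout.
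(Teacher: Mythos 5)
Your high-level strategy (quantify the ESS argument, back-propagate concentration, replace unique continuation and backwards uniqueness by Carleman inequalities) matches the paper's, but the proposal has a genuine gap at the point where the contradiction is supposed to close. You terminate the backward iteration by asserting that ``each bubble consumes a definite amount of $L^3_x$ mass, so at most $A^{O(1)}$ bubbles can fit inside the bound $\|u(t)\|_{L^3_x}\le A$ at any time.'' This does not work as stated: the bubbles produced by back propagation live at \emph{different} times $t_n$, and spatially they all sit inside the same parabolic domain of dependence $x_n=x_0+O((t_0-t_n)^{1/2})$, so their $L^3$ masses are never simultaneously present at a single time and cannot be summed against the $L^\infty_t L^3_x$ hypothesis. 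The paper's essential extra step --- which is absent from your outline --- is to run the Carleman machinery in \emph{both} directions: first a unique-continuation-type inequality (Proposition \ref{carl-second}) spreads the vorticity lower bound at $(t_n,x_n)$ outward in space within an epoch of regularity, and then a backwards-uniqueness-type inequality (Proposition \ref{carl-first}) propagates that lower bound \emph{forward in time} to the fixed time $t_0$, landing in spatial annuli $\{R_n\le|x-x_0|\le R'_n\}$ that become disjoint for widely separated $n$. Only then does summing over $n$ contradict \eqref{u3} at the single time $t_0$; that summation over $\gtrsim \log(TN_0^2)$ disjoint dyadic scales is also the source of the third exponential, which your accounting (compounding $A^{O(1)}$ single-exponential losses) does not reproduce.

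A second, related gap is your treatment of the unbounded drift. You propose to excise the spatial set where $|u|$ is large, using the $L^3$ bound to control its volume; but the Carleman inequalities of Section \ref{carleman-sec} are applied on fixed space-time cylinders and annuli and require pointwise bounds on $u,\nabla u$ throughout those regions (hypothesis \eqref{lu}), so removing an uncontrolled bad set destroys the geometry they need, and a small-volume bound on the bad set does not rescue them. The paper instead \emph{locates} good regions by pigeonholing: epochs of regularity in time (Proposition \ref{basic}(iii), via energy dissipation) and annuli of regularity in space (Proposition \ref{basic}(vi), via a Bourgain-type energy pigeonholing plus propagation of local enstrophy using the bounded total speed property). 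You would also need the final step that converts the resulting frequency dispersion ($\|P_N u\|_{L^\infty_x}\le A_1^{-1}N$ for $N\ge N_*$) into the claimed pointwise bounds on $u,\nabla u,\omega,\nabla\omega$; the paper does this by a Littlewood--Paley trichotomy in the enstrophy energy inequality followed by Gronwall, which is not the same as the local well-posedness reduction you sketch and cannot simply be omitted.
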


\begin{remark}
It is not difficult to iterate using Schauder estimates in H\"older spaces and extend the above regularity bounds to higher values of $j$ than $j=0,1$ (allowing the implied constants in the $O()$ notation to depend on $j$), and also control time derivatives (conceding a factor of $t^{-1}$ for each time derivative); we leave this extension of Theorem \ref{main-1} to the interested reader.
\end{remark}

\begin{theorem}[Quantitative blowup criterion]\label{main-2}  Let $u: [0,T_*) \times \R^3 \to \R^3$, $p: [0,T_*) \times \R^3 \to \R$ be a classical solution to the Navier-Stokes equations which blows up at a finite time $0 < T_* < \infty$.  Then 
$$ \limsup_{t \to T_*^-} \frac{\|u(t)\|_{L^3_x(\R^3)}}{(\log\log\log \frac{1}{T_*-t})^c} = +\infty$$
for an absolute constant $c>0$.
\end{theorem}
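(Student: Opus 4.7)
My plan is to argue by contradiction: I would fix an absolute constant $c>0$ (whose exact value is to be calibrated in terms of the implied $O(1)$ in Theorem \ref{main-1}), and suppose there is a classical solution $(u,p)$ on $[0,T_*)$ with $T_* < \infty$ and a constant $B$ such that
\[ \|u(t)\|_{L^3_x(\R^3)} \leq B \Bigl(\log\log\log \tfrac{1}{T_*-t}\Bigr)^c \]
for all $t$ sufficiently close to $T_*$. By the scaling symmetry I may rescale time so that $T_* \geq 2$. The goal is then to produce enough pointwise control of the vorticity up to $t = T_*$ to invoke a standard blowup criterion, contradicting the maximality of $T_*$.

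The core step is to apply Theorem \ref{main-1} to unit-length time windows ending near $T_*$. For each $t \in (T_*-1, T_*)$ I form the shifted solution $\tilde u(s,x) \coloneqq u(t-1+s,x)$ on $s \in [0,1]$, whose critical norm obeys
\[ A \coloneqq \sup_{s \in [0,1]} \|\tilde u(s)\|_{L^3_x(\R^3)} \lesssim B \Bigl(\log\log\log \tfrac{1}{T_*-t}\Bigr)^c, \]
and I evaluate the vorticity bound from Theorem \ref{main-1} at $s=1$ to conclude
\[ \|\omega(t)\|_{L^\infty_x(\R^3)} \leq \exp\exp\exp(A^{O(1)}). \]

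Now I would tame this triple exponential by a direct arithmetic calculation. Write $L \coloneqq \log\log\log\tfrac{1}{T_*-t}$, so $A^{O(1)} \leq C_1 L^{Kc}$ for absolute constants $C_1,K$ coming from the exponent in Theorem \ref{main-1}. Choosing $c < 1/K$ ensures $A^{O(1)} \leq L-1$ once $t$ is close enough to $T_*$, and a direct threefold exponentiation then gives
\[ \|\omega(t)\|_{L^\infty_x(\R^3)} \leq \exp\Bigl((\log \tfrac{1}{T_*-t})^{\alpha}\Bigr) \]
for some absolute $\alpha \in (0,1)$; in particular $\|\omega(t)\|_{L^\infty_x(\R^3)} = O((T_*-t)^{-1/2})$ as $t \to T_*^-$. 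Hence $\int^{T_*}\|\omega(t)\|_{L^\infty_x(\R^3)}\,dt < \infty$, and the classical Beale--Kato--Majda criterion forbids blowup at $T_*$, contradicting the hypothesis.

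The only genuine work is this arithmetic calibration: the triple exponential appearing in Theorem \ref{main-1} and the triple logarithm appearing in Theorem \ref{main-2} must be balanced against each other, which pins down $c$ as a small enough fraction of $1/K$ where $K$ is the implicit exponent in $A^{O(1)}$. Apart from that bookkeeping, the proof is a direct reduction: Theorem \ref{main-1} produces quantitative pointwise vorticity bounds, which by the chosen calibration are integrable in time up to $T_*$, and a classical blowup criterion then closes the argument. No additional compactness, Carleman, or PDE input is required.
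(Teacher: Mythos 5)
Your proposal is correct and follows essentially the same route as the paper: assume the contrapositive bound $\|u(t)\|_{L^3_x} \lesssim (\log\log\log\frac{1}{T_*-t})^c$, feed it into Theorem \ref{main-1} so that the triple exponential applied to $A^{O(1)} \lesssim L^{Kc}$ (with $c$ calibrated small against the implicit exponent $K$) yields vorticity bounds growing slower than $(T_*-t)^{-1/2}$, and then invoke a classical criterion to contradict blowup. The paper's write-up is the same calibration and explicitly offers both the Prodi--Serrin route (via $u \in L^2_t L^\infty_x$) and the Beale--Kato--Majda route you chose.
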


We now discuss the method of proof of these theorems, which uses many of the same key inputs as in previous arguments (most notably the Carleman estimates used to prove backwards uniqueness and unique continuation), but also introduces some other ingredients in order to avoid having to make some rather delicate results from the qualitative theory (such as profile decompositions) quantitative, as doing so would almost certainly lead to much poorer bounds than the ones given here.  

The main estimate focuses on bounding the scale-invariant quantity
\begin{equation}\label{pntx}
 N_0^{-1} |P_{N_0} u(t_0,x_0)|
\end{equation}
for various points $(t_0,x_0)$ in spacetime, and various frequencies $N_0$, where $P_{N_0}$ is a Littlewood-Paley projection operator to frequencies $\sim N_0$ (see Section \ref{notation-sec} for a precise definition). Using \eqref{u3} and the Bernstein inequality, one can bound this quantity by $O( A )$.  It is well known that if one could improve this bound somewhat for sufficiently large $N_0$, for instance to $O(A^{-C_0} )$ for a large constant $C_0$, then (assuming $A$ is large enough) the $L^3$ norm becomes sufficiently ``dispersed'' in space and frequency that one could adapt the local well-posedness theory for the Navier-Stokes equation (or the local regularity theory from \cite{ckn}) to obtain good bounds.  Hence we will focus on establishing such a bound for \eqref{pntx} for $N_0$ large\footnote{Strictly speaking, it is the scale-invariant quantity $N_0^2 T$ that needs to be large, rather than $N_0$ itself, where $T$ is the amount of time to the past of $x_0$ for which the solution exists and obeys the bounds \eqref{u3}.} enough (see Theorem \ref{main-est} for a precise statement).

The first step in doing so is to observe (basically from the Duhamel formula and some standard Littlewood-Paley theory) that if the quantity \eqref{pntx} is large for some $N_0,t_0,x_0$ with $t_0$ not too close to the initial time $0$, then the quantity
\begin{equation}\label{ptnx-2}
N_1^{-1} |P_{N_1} u(t_1,x_1)|
\end{equation}
is also large (with exactly the same lower bound) for some $(t_1,x_1)$ a little bit to the past of $(t_0,x_0)$ (but more or less within the ``parabolic domain of dependence'', in the sense that $x_1 = x_0 + O( (t_0-t_1)^{1/2})$) and with $N_1$ comparable to $N_0$; see Proposition \ref{basic}(iv) for a precise statement.  If one takes care to have exactly the same lower bounds for both \eqref{pntx} and \eqref{ptnx-2}, then this claim can be iterated, creating a chain of ``bubbles of concentration'' at various points $(t_n,x_n)$ and frequencies $N_n$, propagating backwards in time, and for which
$$
N_n^{-1} |P_{N_n} u(t_n,x_n)|
$$
is bounded from below uniformly in $n$.  Furthermore, by using a ``bounded total speed'' property first observed in \cite{tao-local}, one can ensure that $(t_n,x_n)$ stays in the ``parabolic domain of dependence'' in the sense that $x_n= x_0 + O((t_0-t_n)^{1/2})$. Due to the well known fact (dating back to the classical work of Leray \cite{leray}) that solutions to Navier-Stokes enjoy large ``epochs of regularity'' in which one has control of high regularity norms of the solution in large time intervals outside of a small dimensional singular set of times (see Proposition \ref{basic}(iii) for a precise quantification of this statement), one can show that there are a large number of points $(t_n,x_n)$ for which the frequency $N_n$ is basically as small as possible, in the sense that
$$ N_n \sim |t_0-t_n|^{-1/2}.$$
The (Littlewood-Paley component $P_{N_n} u$ of) the solution $u$ is large near $(t_n,x_n)$, and it is not difficult to then obtain analogous lower bounds on the vorticity
$$ \omega \coloneqq \nabla \times u$$
near $(t_n,x_n)$.  The importance of working with the vorticity comes from the fact that it obeys the \emph{vorticity equation}
\begin{equation}\label{vorticity}
 \partial_t \omega = \Delta \omega - (u \cdot \nabla) \omega + (\omega \cdot \nabla) u
\end{equation}
which can be viewed as a variable coefficient heat equation (in which the lower order coefficients $u, \nabla u$ depend on the velocity field) for which the non-local effects of the pressure $p$ do not explicitly appear.  Using a quantitative version of unique continuation for backwards parabolic equations (see Proposition \ref{carl-second} for a precise statement) that can be established using Carleman inequalities, one can then obtain exponentially small, but still non-trivial, lower bounds\footnote{One can think of this as applying (a quantitative version) of unique continuation ``in the contrapositive''.  Similarly for the invocation of backwards uniqueness below.  Actually in practice the Carleman inequalities also require an additional term such as $|\nabla \omega(t,x)|^2$ in the integrand, but we ignore this term for sake of discussion.} for enstrophy-type quantities such as
$$ \int_{I_n} \int_{R_n \leq |x-x_n| \leq R'_n} |\omega(t,x)|^2\ dx dt $$
for various cylindrical annuli $I_n \times \{ x: R_n \leq |x-x_n| \leq R'_n \}$ surrounding $(t_n,x_n)$, with $R'_n$ a large multiple of $R_n$.  Crucially, one can set $R_n$ to be as large as one pleases (although the lower bound exhibits Gaussian decay in $R_n$).  In order to apply the Carleman inequalities, it is important that the time interval $I$ lies within one of the ``epochs of regularity'' in which one has good $L^\infty$ estimates for $u, \nabla u, \omega, \nabla \omega$, but this can be accomplished without much difficulty (mainly thanks to the energy dissipation term in the energy inequality).

For many choices of scale $R_n$ (a bit larger than $|t_0-t_n|^{1/2}$), one can use an ``energy pigeonholing argument'' (as used for instance by Bourgain \cite{bourgain}) to make the energy (or more precisely, a certain component of the enstrophy) small in an annular region $\{ x: R_n \leq |x-x_n| \leq R'_n \}$ at some time $t'_n$ a little bit to the past of $t_n$; by modifying the somewhat delicate analysis of local enstrophies from \cite{tao-local} that again takes advantage of the ``bounded total speed'' property, one can then propagate this smallness forward in time (at the cost of shrinking the annular region $\{ R_n \leq |x-x_n| \leq R'_n\}$ slightly), and in particular back up to time $t_0$, and parabolic regularity theory can then be used to obtain good $L^\infty$ estimates for $u, \nabla u, \omega, \nabla \omega$ in these regions.  This allows us to again use Carleman inequalities.  Specifically, by using the Carleman inequalities used to prove the backwards uniqueness result in \cite{ess} (see Section \ref{carleman-sec} for precise statements), one can then propagate the lower bounds on $I_n \times \{ x: R_n \leq |x-x_n| \leq R'_n \}$ forward in time until one returns to the original time $t_0$ of interest, eventually obtaining a small but nontrivial lower bound for quantities such as
$$ \int_{R_n \leq |x-x_n| \leq R'_n} |\omega(t_0,x)|^2\ dx $$
(ignoring for this discussion some slight adjustments to the scales $R_n,R'_n$ that occur during this argument), which after some routine manipulations (and using the fact that $(t_n,x_n)$ lies in the parabolic domain of dependence of $(t_0,x_0)$) also gives a lower bound on quantities like
$$ \int_{R_n \leq |x-x_0| \leq R'_n} |u(t_0,x)|^3\ dx.$$
Crucially, this lower bound is uniform in $n$.  If one now lets $n$ vary, the annuli $\{ R_n \leq |x-x_0| \leq R'_n \}$ end up becoming disjoint for widely separated $n$, and one can eventually contradict \eqref{u3} at time $t=t_0$ if $N_0$ is large enough.  

\begin{figure} [t]
\centering
\includegraphics[width=5in]{./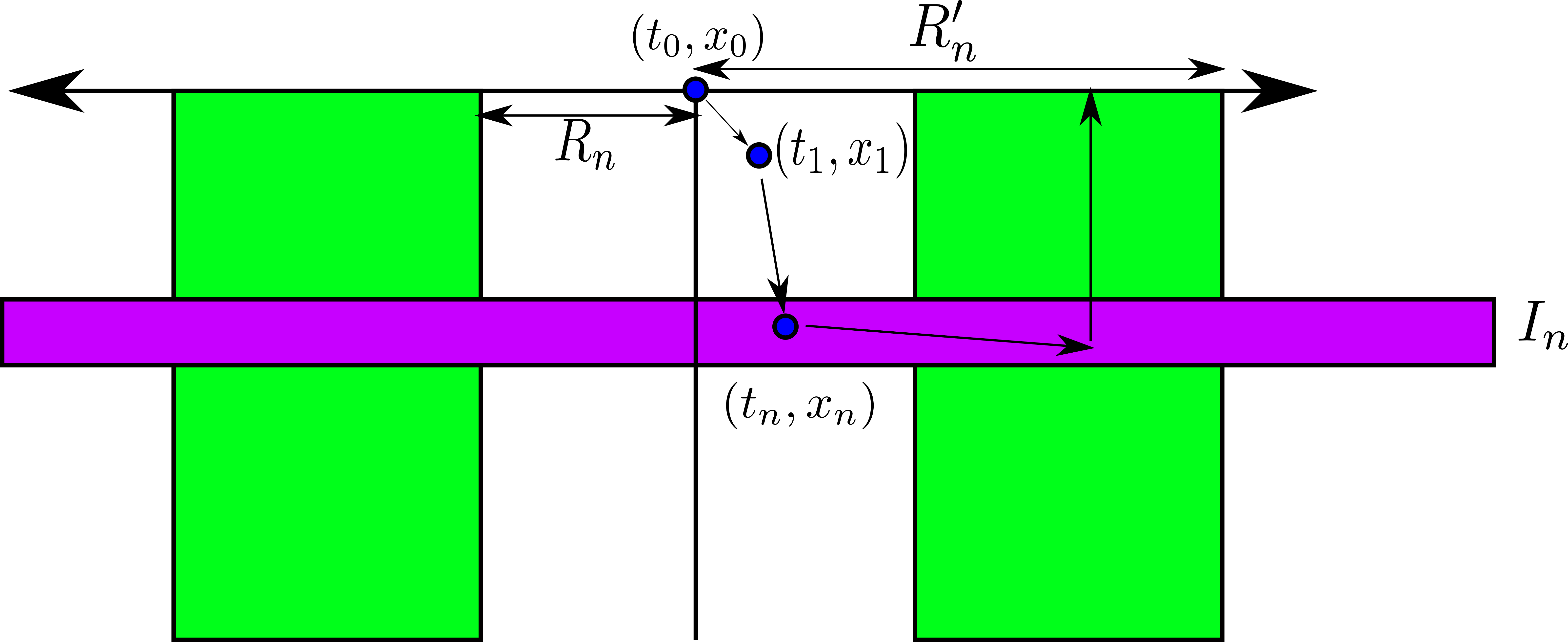}
\caption{A schematic depiction of the main argument.  Starting with a concentration of critical norm at a point $(t_0,x_0)$ in spacetime, one propagates this concentration backwards in time to generate concentrations at further points $(t_n,x_n)$ in spacetime.  Restricting attention to an epoch of regularity $I_n \times \R^3$ (depicted here in purple), Carleman estimates are then used to establish lower bounds on the vorticity at other locations in space, and in particular where the epoch intersects an ``annulus of regularity'' (depicted in green) arising from an energy (or enstrophy) pigeonholing argument.  A further application of Carleman estimates are then used to establish a lower bound on the vorticity (or velocity) in the annular region at time $t=t_0$, thus demonstrating a lack of compactness of the solution at this time which can be used to obtain a contradiction when $N_0$ (or more precisely the scale-invariant quantity $N_0^2T$, where $T$ is the lifespan of the solution) is large enough, by letting $n$ vary.}
\label{fig:scheme}
\end{figure}

\begin{remark}
The triply exponential nature of the bounds in Theorem \ref{main-1} (which is of course closely tied to the triply logarithmic improvement to Theorem \ref{opp} in Theorem \ref{main-2}) can be explained as follows.  One exponential factor comes from the Bourgain energy pigeonholing argument to locate a good spatial scale $R$.  A second exponential factor arises from the Carleman inequalities.  The third exponential arises from locating enough disjoint spatial scales $R_n$ to contradict \eqref{u3}.  It seems that substantially new ideas would be needed in order to improve significantly upon this triple exponential bound.
\end{remark}

\begin{remark}
Of course, by Sobolev embedding, the $L^3_x(\R^3)$ norm in the above theorems can be replaced by the critical homogeneous Sobolev norm $\dot H^{1/2}_x(\R^3)$.  It is likely that the arguments here can also be adapted to handle other critial Besov or Lorentz spaces (as long as the secondary exponent of such spaces is finite, so that the critical norm cannot simultaneously have a substantial presence at an unbounded number of scales), but we will not pursue this question here; based on Theorem \ref{main-2}, it is also reasonable to conjecture that the Orlicz norm $\| u(t) \|_{L^3 (\log\log\log L)^{-c}(\R^3)}$ of $u$ also must blow up as $t \to T_*^-$ for some absolute constant $c>0$.  On the other hand, our argument relies heavily in many places on the fact that we are working in three dimensions.  It may be possible to obtain a higher-dimensional analogue of our results by finding quantitative versions of the argument in \cite{dong}, but we do not pursue this question here.  Similarly, our arguments do not directly allow us to replace the limit superior in Theorem \ref{opp} with a limit, as is done in \cite{seregin} (see also \cite{albritton}); again, it may be possible to also find quantitative analogues of these results, but we do not pursue this matter here. 
\end{remark}


\section{Notation}\label{notation-sec}

We use the notation $X = O(Y)$, $X \lesssim Y$, or $Y \gtrsim X$ to denote the bound $|X| \leq CY$ for some absolute constant $C > 0$.  If we need the implied constant $C$ to depend on parameters we shall indicate this by subscripts, for instance $X \lesssim_j Y$ denotes the bound $|X| \leq C_j Y$ where $C_j$ depends only on $j$.  

Throughout this paper we will need a sufficiently large absolute constant $C_0$, which will remain fixed throughout the paper.  For instance $C_0 = 10^5$ would suffice throughout our paper, if one worked out all the implied constants in the exponents carefully.  

If $I \subset \R$ is a time interval, we use $|I|$ to denote its length.  If $x_0 \in \R^3$ and $R > 0$, we use $B(x_0,R)$ to denote the ball $\{ x \in \R^3: |x-x_0| \leq R \}$, and if $B = B(x_0,R)$ is such a ball, we use $kB = B(x_0,kR)$ to denote its dilates for any $k > 0$.

We use the mixed Lebesgue norms
$$ \| u \|_{L^q_t L^r_x(I \times \R^3)} \coloneqq \left( \int_I \| u(t) \|_{L^r_x(\R^3)}^q\ dt \right)^{1/q}$$
where
$$ \| u(t) \|_{L^r_x(\R^3)} \coloneqq  \left( \int_{\R^3} |u(t,x)|^r\ dx \right)^{1/r}$$
with the usual modifications when $q=\infty$ or $r=\infty$.  For any measurable subset $\Omega \subset I \times \R^3$, we write $\| u \|_{L^q_t L^r_x(\Omega)}$ for $\| u 1_\Omega \|_{L^q_t L^r_x(I \times \R^3)}$, where $1_\Omega$ is the indicator function of $\Omega$.

Given a Schwartz function $f: \R^3 \to \R$, we define the Fourier transform
$$ \hat f(\xi) \coloneqq \int_{\R^3} f(x) e^{-2\pi i \xi \cdot x}\ dx$$
and then for any $N>0$ we define the Littlewood-Paley projection $P_{\leq N}$ by the formula
$$ \widehat{P_{\leq N} f}(\xi) \coloneqq \varphi(\xi/N) \hat f(\xi)$$
where $\varphi: \R^3 \to \R$ is a fixed bump function supported on $B(0,1)$ that equals $1$ on $B(0,1/2)$.  We also define the companion Littlewood-Paley projections
\begin{align*}
P_N &\coloneqq P_N - P_{N/2} \\
P_{>N} &\coloneqq 1 - P_{\leq N} \\
\tilde P_N &\coloneqq P_{2N} - P_{N/4}
\end{align*}
where $1$ denotes the identity operator; thus for instance $P_{\leq N} f= \sum_{k=0}^\infty P_{2^{-k} N} f$ and $P_{>N} f = \sum_{k=1}^\infty P_{2^k N} f$ for Schwartz $f$ (with the convergence in a locally uniform sense).  Also we have $P_N = P_N \tilde P_N$.  
These operators can also be applied to vector-valued Schwartz functions by working component by component.  These operators commute with other Fourier multipliers such as the Laplacian $\Delta$ and its inverse $\Delta^{-1}$, partial derivatives $\partial_i$, heat propagators $e^{t\Delta}$, and the Leray projection $\mathbb{P} \coloneqq - \nabla \times \Delta^{-1} \nabla \times$ to divergence-free vector fields.  To estimate such multipliers, we use the following general estimate:

\begin{lemma}[Multiplier theorem]\label{mult}  Let $N>0$, and let $m: \R^3 \to \C$ be a smooth function supported on $B(0,N)$ that obeys the bounds
$$ |\nabla^j m(\xi)| \leq M N^{-j}$$
for all $0 \leq j \leq 100$ and some $M>0$.  Let $T_m$ denote the associated Fourier multiplier, thus
$$ \widehat{T_m f}(\xi) \coloneqq m(\xi) f(\xi).$$
Then one has
\begin{equation}\label{global}
 \| T_m f \|_{L^q(\R^3)} \lesssim M N^{\frac{3}{p}-\frac{3}{q}} \|f\|_{L^p(\R^3)}
\end{equation}
whenever $1 \leq p \leq q \leq \infty$ and $f: \R^3 \to \R$ is a Schwartz function.  More generally, if $\Omega \subset \R^3$ is an open subset of $\R^3$, $A \geq 1$, and $\Omega_{A/N} \coloneqq \{ x \in \R^3: \mathrm{dist}(x,\Omega) < A/N\}$ denotes the $A/N$-neighbourhood of $\Omega$, then we have a local version
\begin{equation}\label{local}
 \| T_m f \|_{L^{q_1}(\Omega)} \lesssim M N^{\frac{3}{p_1}-\frac{3}{q_1}} \|f\|_{L^{p_1}(\Omega_{A/N})} + A^{-50} M |\Omega|^{\frac{1}{q_1}-\frac{1}{q_2}} N^{\frac{3}{p_2}-\frac{3}{q_2}} \|f\|_{L^{p_2}(\R^3)}
\end{equation}
of the above estimate, whenever $1 \leq p_1 \leq q_1 \leq \infty$ and $1 \leq p_2 \leq q_2 \leq \infty$ are such that $q_2 \geq q_1$, and $|\Omega|$ denotes the volume of $\Omega$.
\end{lemma}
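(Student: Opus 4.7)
The plan is to realize $T_m$ as convolution with the kernel $K(x) \coloneqq \int_{\R^3} m(\xi) e^{2\pi i x \cdot \xi}\,d\xi$ and to deduce both \eqref{global} and \eqref{local} from pointwise bounds on $K$ via Young's convolution inequality. The main technical input is the estimate
$$ |K(x)| \lesssim_j M N^3 (1 + N|x|)^{-j} \qquad \text{for every } 0 \leq j \leq 100. $$
The bound $|K(x)| \lesssim M N^3$ is immediate from $\|m\|_{L^\infty} \leq M$ (the $j=0$ case of the hypothesis) and $|\mathrm{supp}\,m| \lesssim N^3$. The decay is obtained by $j$-fold integration by parts using the identity $(x \cdot \nabla_\xi) e^{2\pi i x \cdot \xi} = 2\pi i |x|^2 e^{2\pi i x \cdot \xi}$, which converts $j$ derivatives on $m$ into a factor $|x|^{-j}$; combined with the hypothesis $|\nabla^j m| \leq M N^{-j}$ and the support bound, this yields $|K(x)| \lesssim_j M N^{3-j} |x|^{-j}$, which merges with the trivial bound to produce the stated form.

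For \eqref{global}, I would apply Young's inequality $\|K * f\|_{L^q} \leq \|K\|_{L^r} \|f\|_{L^p}$ with $1/r = 1 - 1/p + 1/q \in [0,1]$ (valid since $p \leq q$). Rescaling $u = Nx$ in the integral $\int (1+N|x|)^{-jr}\,dx$ and choosing any $j$ with $jr > 3$ (e.g.\ $j=100$) gives $\|K\|_{L^r} \lesssim M N^{3(1-1/r)} = M N^{3/p - 3/q}$, which is exactly \eqref{global}.

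For \eqref{local}, I would decompose $f = f_1 + f_2$ with $f_1 \coloneqq f \mathbf{1}_{\Omega_{A/N}}$, and apply \eqref{global} with exponents $p_1, q_1$ to $T_m f_1$ to produce the first term on the right-hand side. For $f_2$, any $x \in \Omega$ and $y \in \mathrm{supp}\,f_2$ satisfy $|x-y| \geq A/N$, so $|T_m f_2(x)| \leq (|K|_A * |f|)(x)$ where $|K|_A \coloneqq |K| \mathbf{1}_{\{|z| \geq A/N\}}$. Computing $\||K|_A\|_{L^r}$ as above but restricting the integral to $|z| \geq A/N$ and taking $j=100$, the same rescaling yields
$$ \||K|_A\|_{L^r} \lesssim M N^{3/p_2 - 3/q_2} A^{3/r - j} \leq A^{-50} M N^{3/p_2 - 3/q_2}, $$
where $1/r = 1 - 1/p_2 + 1/q_2$ and the last inequality uses $j=100$ together with $r \geq 1$. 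Young's inequality then gives the bound $\|T_m f_2\|_{L^{q_2}(\R^3)} \lesssim A^{-50} M N^{3/p_2 - 3/q_2} \|f\|_{L^{p_2}}$, and the hypothesis $q_2 \geq q_1$ lets one pass from $L^{q_2}(\Omega)$ to $L^{q_1}(\Omega)$ via H\"older at the cost of the stated volume factor $|\Omega|^{1/q_1 - 1/q_2}$.

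No step here is genuinely subtle; this is standard multiplier/kernel theory. The only piece of bookkeeping to watch is choosing $j$ in the integration by parts large enough that (i) the spatial integral of $(1+N|x|)^{-jr}$ converges for the relevant $r$'s, and (ii) the factor $A^{3/r-j}$ in the local estimate beats $A^{-50}$ uniformly in $r \geq 1$. Both constraints are comfortably satisfied by taking $j = 100$, which is precisely the range of derivative control provided by the hypothesis.
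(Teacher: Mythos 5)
Your proposal is correct and follows essentially the same route as the paper: realize $T_m$ as convolution with $K$, get kernel decay $|K(x)|\lesssim_j MN^3(1+N|x|)^{-j}$ by integration by parts, apply Young's inequality for \eqref{global}, and for \eqref{local} split $f$ according to whether it lives near $\Omega$, restrict the kernel to $|z|\geq A/N$ for the far part, and use H\"older to pass from $q_2$ to $q_1$. The only cosmetic difference is that the paper first normalizes $M=N=1$ by scaling, whereas you track these constants explicitly.
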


By the usual limiting arguments, one can replace the hypothesis that $f$ is Schwartz with the requirement that $f$ lie in $L^p$.  Also one can extend this theorem to vector-valued $f: \R^3 \to \R^3$ by working component by component.   In practice, the $A^{-50}$ factor will ensure that the second term on the right-hand side of \eqref{local} is negligible compared to the first, and can be ignored on a first reading.

\begin{proof}  By homogeneity we can normalise $M=1$; by scaling (or dimensional analysis) we may also normalise $N=1$.  We can write $T_m f$ as a convolution $T_m f = f * K$ of $f$ with the kernel
$$ K(x) \coloneqq \int_{\R^3} m(\xi) e^{2\pi i \xi \cdot x}\ d\xi.$$
By repeated integration by parts we obtain the bounds $K(x) \lesssim (1+|x|)^{-90}$ (say), so in particular $\|K\|_{L^r(\R^3)} \lesssim 1$ for all $1 \leq r \leq \infty$.  From Young's convolution inequality we then conclude that
$$ \| T_m f \|_{L^q(\R^3)} \lesssim \|f\|_{L^p(\R^3)}$$
giving \eqref{global}.  To prove \eqref{local}, we see that the claim already follows from \eqref{global} when $f$ is supported in $L^p(\Omega_A)$, so by the triangle inequality we may assume that $f$ is supported on $\R^3 \backslash \Omega_A$.  In this case we may replace the convolution kernel $K$ by its restriction to the complement of $B(0,A)$, which allows us to improve the bound on the $L^r$ norm of the kernel to (say) $O(A^{-50})$.  The claim follows from Young's convolution inequality, after first using H\"older's inequality to bound $ \| T_m f \|_{L^{q_1}(\Omega)} \leq |\Omega|^{\frac{1}{q_1}-\frac{1}{q_2}} \| T_m f \|_{L^{q_2}(\Omega)}$.
\end{proof}

Thus for instance, we have the Bernstein inequalities
\begin{equation}\label{bern}
 \| \nabla^j f \|_{L^q(\R^3)} \lesssim_j N^{j + \frac{3}{p} - \frac{3}{q}} \|f\|_{L^p(\R^3)}
\end{equation}
whenever $1 \leq p \leq q \leq \infty$, $j \geq 0$, and $f$ is a Schwartz function whose Fourier transform is supported on $B(0,N)$, as can be seen by writing $f = P_{\leq 2N} f$ and applying Lemma \ref{mult}.  In a similar spirit, one has
\begin{equation}\label{bern-2}
 \| P_N e^{t\Delta} \nabla^j f \|_{L^q(\R^3)} \lesssim_j \exp( - N^2 t / 20 ) N^{j + \frac{3}{p} - \frac{3}{q}} \|f\|_{L^p(\R^3)}
\end{equation}
for any $t>0$ and any Schwartz $f$.  Summing this, we obtain the standard heat kernel bounds
\begin{equation}\label{bern-3}
 \| e^{t\Delta} \nabla^j f \|_{L^q(\R^3)} \lesssim_j t^{-\frac{j}{2} - \frac{3}{2p} + \frac{3}{2q}} \|f\|_{L^p(\R^3)}.
\end{equation}

\section{Basic estimates}

The purpose of this section is to establish the following initial bounds for $L^\infty_t L^3_x$-bounded solutions to the Navier-Stokes equations.

\begin{proposition}[Initial estimates]\label{basic}  Let $u: [t_0-T,t_0] \times \R^3 \to \R^3$, $p: [t_0-T,t_0] \times \R^3 \to \R$ be a classical solution to Navier-Stokes that obeys the bound
\begin{equation}\label{able}
 \| u \|_{L^\infty_t L^3_x([t_0-T,t_0] \times \R^3)} \leq A.
\end{equation}
for some $A \geq C_0$.  We adopt the notation
$$ A_j \coloneqq A^{C_0^j}$$
for all $j$, thus $A_0=A$ and $A_{j+1}=A_j^{C_0}$.
\begin{itemize}
\item[(i)]  (Pointwise derivative estimates) For any $(t,x) \in [t_0-T/2, t_0] \times \R^3$ and $N > 0$, we have
\begin{equation}\label{pant}
P_N u(t,x) = O( A N ); \quad \nabla P_N u(t,x) = O( A N^2 ); \quad \partial_t P_N u(t,x) = O( A^2 N^3 );
\end{equation}
similarly, the vorticity $\omega \coloneqq \nabla \times u$ obeys the bounds
\begin{equation}\label{pant-2}
P_N \omega(t,x) = O( A N^2 ); \quad \nabla P_N \omega(t,x) = O( A N^3 ); \quad \partial_t P_N \omega(t,x) = O( A^2 N^4 ).
\end{equation}
\item[(ii)]  (Bounded total speed) For any interval $I$ in $[t_0-T/2, t_0]$, one has
\begin{equation}\label{bts}
 \| u \|_{L^1_t L^\infty_x( I \times \R^3 )} \lesssim A^{4} |I|^{1/2}.
\end{equation}
\item[(iii)]  (Epochs of regularity) For any interval $I$ in $[t_0-T/2, t_0]$, there is a subinterval $I' \subset I$ with $|I'| \gtrsim A^{-8} |I|$ such that
$$ \| \nabla^j u\|_{L^\infty_t L^\infty_x(I' \times \R^3)} \lesssim A^{O(1)} |I|^{-(j+1)/2}$$
and
$$ \| \nabla^j \omega\|_{L^\infty_t L^\infty_x(I'  \times \R^3)} \lesssim A^{O(1)} |I|^{-(j+2)/2}$$
for $j=0,1$.
\item[(iv)]  (Back propagation)  Let $(t_1,x_1) \in [t_0-T/2,t_0] \times \R^3$ and $N_1 \geq A_3 T^{-1/2}$ be such that
\begin{equation}\label{n1}
 |P_{N_1} u(t_1,x_1)| \geq A_1^{-1} N_1.
\end{equation}
Then there exists $(t_2,x_2) \in [t_0-T,t_1] \times \R^3$ and $N_2 \in [A_2^{-1} N_1, A_2 N_1]$ such that
$$ A_3^{-1} N_1^{-2} \leq t_1-t_2 \leq A_3 N_1^{-2}$$
and
$$ |x_2-x_1| \leq A_4 N_1^{-1}$$
and
\begin{equation}\label{n2}
 |P_{N_2} u(t_2,x_2)| \geq A_1^{-1} N_2.
\end{equation}
\item[(v)] (Iterated back propagation)  Let $x_0 \in \R^3$ and $N_0 > 0$ be such that
$$ |P_{N_0} u(t_0,x_0)| \geq A_1^{-1} N_0.$$
Then for every $A_4 N_0^{-2} \leq T_1 \leq A_4^{-1} T$, there exists 
$$ (t_1,x_1) \in [t_0-T_1, t_0 - A_3^{-1} T_1] \times \R^3$$
and
$$ N_1 = A_3^{O(1)} T_1^{-1/2}$$
such that
$$ x_1 = x_0 + O( A_4^{O(1)} T_1^{1/2})$$
and
$$ |P_{N_1} u(t_1,x_1)| \geq A_1^{-1} N_1.$$
\item[(vi)]  (Annuli of regularity)  If $0 < T' < T/2$, $x_0 \in \R^3$, and $R_0 \geq (T')^{1/2}$, then there exists a scale
$$ R_0 \leq R \leq \exp(A_6^{O(1)}) R_0$$
such that on the region
$$ \Omega := \{ (t,x) \in [t_0-T',t_0] \times \R^3: R \leq |x-x_0| \leq A_6 R \}$$
we have
$$ \| \nabla^j u \|_{L^\infty_t L^\infty_x(\Omega)} \lesssim A_6^{-2} (T')^{-(j+1)/2}$$
and
$$ \| \nabla^j \omega \|_{L^\infty_t L^\infty_x(\Omega)} \lesssim A_6^{-2} (T')^{-(j+2)/2}$$
for $j=0,1$.
\end{itemize}
\end{proposition}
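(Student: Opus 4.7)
The plan is to handle the six parts in order of increasing difficulty, with parts (iv)--(v) providing the only genuinely new content.

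For part (i), everything reduces to Lemma \ref{mult} applied to the hypothesis \eqref{able}. The bound on $P_N u$ is literally Bernstein with $p=3$, $q=\infty$, and $\nabla P_N u, P_N\omega, \nabla P_N\omega$ follow by inserting a factor of $N$ or $N^2$. For $\partial_t P_N u$ we use the equation in the form $\partial_t u = \Delta u - P_N\mathbb{P}\nabla\cdot(u\otimes u)$; the linear term contributes $O(AN^3)$ and the nonlinear term, handled via $\|u\otimes u\|_{L^{3/2}_x}\leq A^2$ followed by \eqref{global} with $p=3/2$, $q=\infty$ (absorbing one derivative and $\mathbb{P}$ into the multiplier symbol), contributes $O(A^2 N^3)$. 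Part (ii) follows by splitting $u=P_{\leq N}u+P_{>N}u$ at a frequency $N\sim A^a|I|^{-1/2}$ chosen to balance the two pieces, estimating the low frequencies directly by Bernstein and controlling the high frequencies by Duhamel against the heat semigroup, the nonlinearity being tamed using the $L^\infty_tL^3_x$ hypothesis; integrating over $I$ costs the factor $|I|^{1/2}$ and the tracked constants come out to $A^4$.

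For part (iii), I would argue via a mollified Leray energy inequality localized to $I$. The hypothesis \eqref{able} plus part (i) provide $L^2$-in-space control at a single frequency scale, and combining with the standard energy/enstrophy identity produces an upper bound on $\int_I \|\nabla u(t)\|_{L^2}^2\,dt$ of order $A^{O(1)}$ (after suitable normalisation by $|I|$). Pigeonholing on a dyadic decomposition of $I$ finds a subinterval $I'$ of length $\gtrsim A^{-8}|I|$ on which the enstrophy is pointwise bounded; parabolic Schauder / heat-kernel smoothing (applied on $I'$ using the $L^\infty_tL^3_x$ hypothesis as input) then upgrades to the pointwise estimates on $u,\nabla u,\omega,\nabla\omega$. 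Part (vi) is the same kind of pigeonholing applied in the \emph{spatial} radial variable: the global energy-flux budget on $[t_0-T',t_0]$ limits the total enstrophy that can live in all the dyadic annuli $\{R\leq|x-x_0|\leq 2R\}$ simultaneously. Letting $R$ range over $\exp(A_6^{O(1)})$ such annuli, most of them carry enstrophy far smaller than $A_6^{-O(1)}$; fixing one such $R$ and applying a local Navier-Stokes regularity result (in the spirit of Caffarelli--Kohn--Nirenberg, using the smallness of the local energy/enstrophy) yields the pointwise bounds on the annulus $\{R\leq|x-x_0|\leq A_6 R\}$.

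The heart of the proposition is part (iv). I would start from the Duhamel representation
\begin{equation*}
P_{N_1} u(t_1) = e^{(t_1-t_2)\Delta}P_{N_1}u(t_2) - \int_{t_2}^{t_1} e^{(t_1-s)\Delta}P_{N_1}\mathbb{P}\nabla\cdot(u\otimes u)(s)\,ds,
\end{equation*}
for a $t_2$ with $t_1-t_2\sim A_3^{-1}N_1^{-2}$. The first term is essentially $e^{c}P_{N_1}u(t_2)$ and, after spatially averaging over a ball of radius $O(N_1^{-1})$ around $x_1$ via the multiplier theorem with the heat kernel, is controlled by $\sup_{|x-x_1|\lesssim N_1^{-1}}|P_{N_1}u(t_2,x)|$ up to an acceptable error. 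For the Duhamel integral, paraproduct decomposition $u\otimes u=\sum_{N_2,N_2'}P_{N_2}u\otimes P_{N_2'}u$ and the localisation of $P_{N_1}$ forces (up to symmetry) at least one factor to live at frequency $\sim N_1$; the heat factor $e^{(t_1-s)\Delta}P_{N_1}$ contributes Gaussian decay in the transverse direction, restricting the effective support to $|x-x_1|\lesssim A_4 N_1^{-1}$. Comparing the assumed lower bound $|P_{N_1}u(t_1,x_1)|\geq A_1^{-1}N_1$ against these two contributions, pigeonholing over $(s,x,N_2)$ in the nonlinear term produces the claimed $(t_2,x_2,N_2)$ satisfying \eqref{n2}; the symmetric case where the bound comes from the linear term is even easier. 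Part (v) then iterates (iv): starting from $(t_0,x_0,N_0)$, each iteration pushes back in time by $\sim A_3^{-1}N_n^{-2}$ and shifts the frequency by at most a factor $A_2$, so iterating $\sim \log_{A_2}(N_0/T_1^{-1/2})$ times, while controlling the total spatial drift via a geometric sum (this is the ``bounded total speed'' flavour), produces a point $(t_1,x_1)$ with $N_1\sim T_1^{-1/2}$.

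The main obstacle is the back propagation in part (iv), specifically ensuring that \emph{exactly} the same constant $A_1^{-1}$ appears in both \eqref{n1} and \eqref{n2} (otherwise part (v) cannot be iterated without the constant degrading). This requires the paraproduct error terms and the heat-kernel tail contributions to be controlled by powers of $A$ that are strictly smaller than $A_1$ (hence the role of the tower $A_j=A^{C_0^j}$): every step must lose at most a bounded number of factors of $C_0$ in the exponent, leaving plenty of room to keep the concentration lower bound fixed at $A_1^{-1}$.
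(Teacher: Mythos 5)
Your plan for part (iv) has a genuine gap, and it is exactly at the point you flag as "the main obstacle." A direct pigeonholing of the Duhamel integral cannot preserve the constant $A_1^{-1}$: once you decompose $\tilde P_{N_1}(u\otimes u)$ into paraproduct pieces $P_{N_2}u\otimes P_{N_2'}u$ and try to extract a lower bound on a single factor, you must bound the \emph{other} factor by something, and the only a priori bound available is Bernstein, $O(AN)$. This degrades the concentration constant to at best $A^{-O(1)}A_1^{-1}$, and since (v) requires an unbounded number of iterations (of order $\log(N_0^2 T_1)$), any loss per step — even a factor of $2$ — is fatal. The paper resolves this by running (iv) \emph{in the contrapositive}: assume $|P_N u|\leq A_1^{-1}N$ for all $N\in[A_2^{-1},A_2]$ throughout the backward parabolic region $[-A_3,0]\times B(0,A_4)$ (after normalising $N_1=1$). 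This smallness hypothesis, bootstrapped through a chain of localised $L^{3/2}_x$, $L^1_x$ and $L^2_x$ estimates, makes \emph{both} factors of the quadratic nonlinearity small, so the Duhamel contribution at $(0,0)$ is $O(A^6 A_1^{-2})\ll A_1^{-1}$, contradicting \eqref{n1}. The quadratic gain $A_1^{-2}$ is what lets the output constant equal the input constant exactly, and it is only accessible when one assumes smallness rather than trying to deduce largeness. Without this device your part (v) does not close. (Relatedly, your iteration count for (v) is off: (iv) allows $N_2$ to be \emph{larger} than $N_1$, so the frequencies need not descend geometrically; the paper instead telescopes the time increments against the bounded total speed \eqref{bts} to locate an index with $N_i\sim T_1^{-1/2}$ and to bound the spatial drift.)

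Two further steps would fail as sketched. For (ii), splitting at a single frequency $N\sim A^a|I|^{-1/2}$ and controlling $P_{>N}u$ by Duhamel against $L^\infty_t L^3_x$ alone diverges: each dyadic block $P_{N'}$ of the nonlinear Duhamel term contributes $O(A^2 N'|I|)$ to $L^1_tL^\infty_x$, and the sum over $N'>N$ is infinite. The paper first subtracts the free evolution $u^{\lin}=e^{(t+1)\Delta}u(-1)$ (note $u$ itself is not in $L^2_x$, so your "$\int_I\|\nabla u\|_{L^2}^2$" in (iii) also needs this correction), derives the energy dissipation bound $\sum_N N^2\|P_N u^{\nl}\|_{L^2_tL^2_x}^2\lesssim A^4$, and uses that square-summability to sum the high frequencies. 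For (vi), pigeonholing the space-time enstrophy over exponentially many annuli does produce a good annulus, but invoking a CKN-type $\eps$-regularity theorem there ignores the nonlocal pressure, which is influenced by the (not small) solution outside the annulus, as well as the linear component $u^{\lin}$ which carries no smallness. The paper instead pigeonholes at a single time $t_1$, then propagates the smallness forward to all of $[t_1,t_0]$ via the vorticity equation (pressure-free) with a time-dependent cutoff contracting faster than $\|u(t)\|_{L^\infty_x}$ — this is where the bounded total speed (ii) is consumed — handling the vortex-stretching term by a Whitney decomposition and a localised Biot--Savart law.
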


As $C_0$ is assumed large, any polynomial combination of $A=A_0, A_1,\dots,A_{j-1}$ will be dominated by $A_j$ for any $j \geq 1$; we take advantage of this fact without comment in the sequel to simplify the estimates.  The various numerical powers of $A$ (or $A_j$) that appear in the above proposition are not of much significance, except that it is important for iterative purposes that the negative power $A_1^{-1}$ appearing in \eqref{n1} is exactly the same as the one appearing in \eqref{n2}. 

In the remainder of this section $t_0, T, A, u, p$ are as in Proposition \ref{basic}.  Our objective is now to establish the claims (i)-(vi).

We begin with the proof of (i).  It suffices to establish \eqref{pant}, as \eqref{pant-2} then follows from the Bernstein inequalities \eqref{bern}.  The first two claims of \eqref{pant} are immediate from \eqref{able} and \eqref{bern}.  For the final claim, we first apply the Leray projection $\mathbb{P}$ to \eqref{ns} to obtain the familiar equation
\begin{equation}\label{leray}
 \partial_t u = \Delta u - \mathbb{P} \nabla \cdot (u \otimes u)
\end{equation}
where the divergence $\nabla \cdot (u \otimes u)$ of the symmetric tensor $u \otimes u$ is expressed in coordinates as
$$ (\nabla \cdot (u \otimes u))_i = \partial_j (u_i u_j)$$
with the usual summation conventions.  We apply $P_N$ to both sides of \eqref{leray}.  From \eqref{able} and \eqref{bern} we have
$$ \| P_N \Delta u(t) \|_{L^\infty_x(\R^3)} \lesssim N^3 A.$$
From \eqref{able} and H\"older we have $\| u \otimes u(t) \|_{L^{3/2}_x(\R^3)} \lesssim A^2$, hence by Lemma \ref{mult} we have
$$ \| P_N \mathbb{P} \nabla \cdot (u \otimes u)(t) \|_{L^\infty_x(\R^3)} \lesssim N^3 A^2,$$
and the final claim of \eqref{pant} follows from the triangle inequality.

Now we prove (ii), (iii).  It is not difficult to see that these estimates are invariant with respect to time translation (shifting $I, t_0, u$ accordingly) and also rescaling (adjusting $T, t_0, I, u$ accordingly).  Hence we may assume without loss of generality that $I = [0,1] \subset [t_0-T/2,t_0]$, which implies that $[-1,1] \subset [t_0-T,t_0]$.  

It will be convenient to remove\footnote{See also \cite{cal} for a similar technique to apply energy methods to Navier-Stokes solutions that lie in a function space other than $L^2_x$.} a linear component from $u$, as it is not well controlled in $L^2_x$ type spaces.  Namely, on $[-1,1] \times \R^3$ we split $u = u^{\lin} + u^{\nl}$, where $u^{\lin}$ is the linear solution 
\begin{equation}\label{linear}
 u^{\lin}(t) \coloneqq e^{(t+1)\Delta} u(-1)
\end{equation}
and $u^{\nl} \coloneqq u - u^{\lin}$ is the nonlinear component.  From \eqref{able} we have
\begin{equation}\label{able-2}
 \| u^{\lin} \|_{L^\infty_t L^3_x([-1,1] \times \R^3)}, \| u^{\nl} \|_{L^\infty_t L^3_x([-1,1] \times \R^3)} \lesssim A.
\end{equation}
From \eqref{leray} and Duhamel's formula one has
$$ u^{\nl}(t) = - \int_{-1}^t e^{(t-t')\Delta} \mathbb{P} \nabla \cdot (u \otimes u)(t')\ dt'.$$
From \eqref{able}, $u \otimes u$ has an $L^{3/2}_x(\R^3)$ norm of $O(A^2)$.  From \eqref{bern-3}, the operator $e^{(t-t')\Delta} \mathbb{P} \nabla \cdot$ maps $L^{3/2}_x$ to $L^2_x$ with an operator norm of $(t-t')^{-3/4}$.  From Minkowski's inequality we conclude an energy bound for the nonlinear component:
\begin{equation}\label{vl2}
 \| u^{\nl} \|_{L^\infty_t L^2_x([-1,1] \times \R^3)} \lesssim A^2.
\end{equation}
We now restrict attention to the slab $[-1/2,1] \times \R^3$.  Here $t+1$ lies between $1/2$ and $2$, and we can use \eqref{able}, \eqref{linear}, and \eqref{bern-3} to obtain very good bounds on $u^{\lin}$ (but only in spaces with an integrability exponent greater than or equal to $3$).  More precisely, we have
\begin{equation}\label{very}
 \| \nabla^j u^{\lin} \|_{L^\infty_t L^p_x([-1/2,1] \times \R^3)} \lesssim_j A
\end{equation}
for any $3 \leq p \leq \infty$ and $j \geq 0$.

To exploit the bound \eqref{vl2}, we use the energy method.  Since $u^{\lin}$ solves the heat equation $\partial_t u^{\lin} = \Delta u^{\lin}$, we can subtract this from \eqref{ns} to conclude that
\begin{equation}\label{vi}
 \partial_t u^{\nl} = \Delta u^{\nl} - \nabla \cdot (u \otimes u) - \nabla p.
\end{equation}
Taking inner products with $u^{\nl}$, which is divergence-free, and integrating by parts, we conclude that
$$ \frac{1}{2} \partial_t \int_{\R^3} |u^{\nl}|^2\ dx = - \int_{\R^3} |\nabla u^{\nl}|^2\ dx + \int_{\R^3} (\nabla u^{\nl}) \cdot (u \otimes u)\ dx$$
where the quantity $(\nabla u^{\nl}) \cdot (u \otimes u)$ is defined in coordinates as
$$ (\nabla u^{\nl}) \cdot (u \otimes u) \coloneqq (\partial_i u^{\nl}_j) u_i u_j.$$
From the divergence-free nature of $u^{\nl}$ and integration by parts we have
$$ \int_{\R^3} (\nabla u^{\nl}) \cdot (u^{\nl} \otimes u^{\nl})\ dx = 0$$
and hence
$$ \frac{1}{2} \partial_t \int_{\R^3} |u^{\nl}|^2\ dx = - \int_{\R^3} |\nabla u^{\nl}|^2\ dx + \int_{\R^3} (\nabla u^{\nl}) \cdot (u \otimes u - u^{\nl}\otimes u^{\nl})\ dx.$$
Integrating this on $[-1/2,1]$ using \eqref{vl2} we conclude that
$$ \int_{-1/2}^1 \int_{\R^3} |\nabla u^{\nl}|^2\ dx dt \lesssim A^2 + \int_{-1/2}^1 \int_{\R^3} |\nabla u^{\nl}|  |u \otimes u - u^{\nl}\otimes u^{\nl}|\ dx dt,$$
and hence by Young's inequality
$$ \int_{-1/2}^1 \int_{\R^3} |\nabla u^{\nl}|^2\ dx dt \lesssim A^2 + \int_{-1/2}^1 \int_{\R^3} |u \otimes u - u^{\nl}\otimes u^{\nl}|^2\ dx dt.$$
Splitting $u \otimes u - u^{\nl} \otimes u^{\nl} = u_l \otimes u + u^{\nl} \otimes u_l$ and using \eqref{able}, \eqref{able-2}, \eqref{very} (with $p=6$, $j=0$) and H\"older's inequality, one has
$$ \int_{-1/2}^1 \int_{\R^3} |u \otimes u - u^{\nl}\otimes u^{\nl}|^2\ dx dt \lesssim A^4 $$
and thus
\begin{equation}\label{122}
 \int_{-1/2}^1 \int_{\R^3} |\nabla u^{\nl}|^2\ dx dt \lesssim A^4.
\end{equation}
By Plancherel's theorem this implies in particular that
\begin{equation}\label{equiv}
 \sum_N N^2 \| P_N u^{\nl} \|_{L^2_t L^2_x([-1/2,1] \times \R^3)}^2 \lesssim A^4
\end{equation}
where $N$ ranges over powers of two.  Also, from Sobolev embedding one has
\begin{equation}\label{v26}
 \| u^{\nl} \|_{L^2_t L^6_x([-1/2,1] \times \R^3)} \lesssim A^2.
\end{equation}
We are now ready to establish the bounded total speed property (ii), which is a variant of \cite[Proposition 9.1]{tao-local}.  If $t \in [0,1]$ and $N \geq 1$ is a power of two, we see from \eqref{leray} and Duhamel's formula that
$$ P_N u^{\nl}(t) = e^{(t+\frac{1}{2})\Delta} P_N u^{\nl}\left(-\frac{1}{2}\right) - \int_{-1/2}^t P_N e^{(t-t')\Delta} \mathbb{P} \nabla \cdot \tilde P_N (u \otimes u)(t')\ dt'.$$
From \eqref{bern-2} the operator $P_N e^{(t-t')\Delta} \mathbb{P} \nabla \cdot$ has an operator norm of $O( N \exp( - N^2 (t-t') / 20 ) )$ on $L^\infty_x$, while from \eqref{able-2}, \eqref{bern-2} we see that $e^{(t+\frac{1}{2})\Delta} P_N v(-\frac{1}{2})$ has an $L^\infty_x$ norm of $O( A N \exp( - N^2 / 20 ) )$.  Thus by Young's inequality
$$ \| P_N u^{\nl} \|_{L^1_t L^\infty_x([0,1] \times \R^3)} \lesssim A N \exp(-N^2/20) + N^{-1} \| \tilde P_N (u \otimes u) \|_{L^1_t L^\infty_x([-1/2,1] \times \R^3)}.$$
We split $u \otimes u = u^{\lin} \otimes u^{\lin} + u^{\lin} \otimes u^{\nl} + u^{\nl} \otimes u^{\lin} + u^{\nl} \otimes u^{\nl}$.  From \eqref{very} one has
$$ \| \tilde P_N (u^{\lin} \otimes u^{\lin}) \|_{L^1_t L^\infty_x([-1/2,1] \times \R^3)} \lesssim A^2.$$
From \eqref{bern}, H\"older's inequality, and \eqref{very}, \eqref{v26} one has
\begin{align*}
 \| \tilde P_N (u^{\lin} \otimes u^{\nl}) \|_{L^1_t L^\infty_x([-1/2,1] \times \R^3)} &\lesssim 
N^{1/2}
 \| u^{\lin} \otimes u^{\nl} \|_{L^1_t L^6_x([-1/2,1] \times \R^3)} \\
&\lesssim A^3 N^{1/2}.
\end{align*}
Similarly with $u^{\lin} \otimes u^{\nl}$ replaced by $u^{\nl} \otimes u^{\lin}$.  We then split $u^{\nl} \otimes u^{\nl} = P_{\leq N} u^{\nl} \otimes P_{\leq N} u^{\nl} + P_{\leq N} u^{\nl} \otimes P_{>N} u^{\nl} + P_{> N} u^{\nl} \otimes P_{\leq N} u^{\nl} + P_{>N} u^{\nl} \otimes P_{>N} u^{\nl}$. We have from H\"older that
\begin{align*}
\| P_{\leq N} u^{\nl} \otimes P_{\leq N} u^{\nl} \|_{L^1_t L^\infty_x([-1/2,1] \times \R^3)} &\lesssim
\| P_{\leq N} u^{\nl} \|_{L^2_t L^\infty_x([-1/2,1] \times \R^3)}^2 \\
\| P_{\leq N} u^{\nl} \otimes P_{>N} u^{\nl}\|_{L^1_t L^2_x([-1/2,1] \times \R^3)}, \quad & \\
\| P_{> N} u^{\nl} \otimes P_{\leq N} u^{\nl} \|_{L^1_t L^2_x([-1/2,1] \times \R^3)} &\lesssim
\| P_{\leq N} u^{\nl} \|_{L^2_t L^\infty_x([-1/2,1] \times \R^3)} \\
&\quad \times\| P_{> N} u^{\nl} \|_{L^2_t L^2_x([-1/2,1] \times \R^3)} \\
\| P_{> N} u^{\nl} \otimes P_{>N} u^{\nl} \|_{L^1_t L^1_x([-1/2,1] \times \R^3)}&\lesssim
\| P_{> N} u^{\nl} \|_{L^2_t L^2_x([-1/2,1] \times \R^3)}^2
\end{align*}
and hence by \eqref{bern}, the triangle inequality, and Young's inequality
$$ \| \tilde P_N (u \otimes u) \|_{L^1_t L^\infty_x([-1/2,1] \times \R^3)} \lesssim \| P_{\leq N} u^{\nl} \|_{L^2_t L^\infty_x([-1/2,1] \times \R^3)}^2 + N^3
\| P_{> N} u^{\nl} \|_{L^2_t L^2_x([-1/2,1] \times \R^3)}^2.$$
Putting all this together, we conclude that
\begin{align*}
 \| P_N u^{\nl} \|_{L^1_t L^\infty_x([0,1] \times \R^3)} &\lesssim A^3 N^{-1/2}\\
&\quad + N^{-1} \| P_{\leq N} u^{\nl} \|_{L^2_t L^\infty_x([-1/2,1] \times \R^3)}^2 \\
&\quad + N^2 \| P_{> N} u^{\nl} \|_{L^2_t L^2_x([-1/2,1] \times \R^3)}^2.
\end{align*}
By \eqref{bern} and Cauchy-Schwarz we have
\begin{align*}
 \| P_{\leq N} u^{\nl} \|_{L^2_t L^\infty_x([-1/2,1] \times \R^3)}^2 &\lesssim \left(\sum_{N' \leq N} (N')^{3/2} \| P_{N'} u^{\nl} \|_{L^2_t L^2_x([-1/2,1] \times \R^3)}\right)^2\\
&\lesssim N^{1/2} \sum_{N' \leq N} (N')^{2} \| P_{N'} u^{\nl} \|_{L^2_t L^2_x([-1/2,1] \times \R^3)}^2
\end{align*}
where $N'$ ranges over powers of two, while from Plancherel's theorem one has
$$ \| P_{> N} u^{\nl} \|_{L^2_t L^2_x([-1/2,1] \times \R^3)}^2 \lesssim \sum_{N' > N} \| P_{N'} u^{\nl} \|_{L^2_t L^2_x([-1/2,1] \times \R^3)}^2.$$
Summing in $N$, and using the triangle inequality followed by \eqref{equiv}, we conclude that
$$ \| P_{\geq 1} u^{\nl} \|_{L^1_t L^\infty_x([0,1] \times \R^3)} \lesssim A^3 + \sum_{N'} (N')^2 \| P_{N'} u^{\nl} \|_{L^2_t L^2_x([-1/2,1] \times \R^3)}^2 \lesssim A^4.$$
From \eqref{able-2} and \eqref{bern} we also have
$$ \| u^{\lin} \|_{L^1_t L^\infty_x([0,1] \times \R^3)}, \| P_{< 1} u^{\nl} \|_{L^1_t L^\infty_x([0,1] \times \R^3)} \lesssim A$$
and we conclude
$$ \| u \|_{L^1_t L^\infty_x([0,1] \times \R^3)} \lesssim A^4$$
which gives (ii).

Now we establish (iii).  For $t \in [0,1]$ we define the enstrophy-type quantity
$$ E(t) \coloneqq \frac{1}{2} \int_{\R^3} |\nabla u^{\nl}(t,x)|^2\ dx,$$
Taking the gradient of \eqref{vi} and then taking the inner product with $\nabla u^{\nl}$, we see upon integration by parts that
$$ \partial_t E(t) = - \int_{\R^3} |\nabla^2 u^{\nl}|^2\ dx + \int_{\R^3} \Delta u^{\nl} \cdot (\nabla \cdot (u \otimes u))\ dx$$
and hence by Young's inequality
$$\partial_t E(t) \leq - \frac{1}{2} \| \nabla^2 u^{\nl} \|_{L^2_x(\R^3)}^2 + O\left( \| \nabla \cdot (u \otimes u) \|_{L^2_x(\R^3)}^2 \right).$$
By the Leibniz rule and H\"older's inequality, one has
$$ \| \nabla \cdot (u \otimes u) \|_{L^2_x(\R^3)} \lesssim \| u \|_{L^6_x(\R^3)} \| \nabla u \|_{L^3_x(\R^3)}.$$
From \eqref{very} and the triangle inequality one has
$$\| u \|_{L^6_x(\R^3)}  \lesssim A + \| u^{\nl} \|_{L^6_x(\R^3)}$$
and
$$\| \nabla u \|_{L^3_x(\R^3)}  \lesssim A + \| \nabla u^{\nl} \|_{L^3_x(\R^3)}$$
while from Sobolev embedding and H\"older one has
$$ \| u^{\nl} \|_{L^6_x(\R^3)} \lesssim \| \nabla u^{\nl} \|_{L^2_x(\R^3)} \lesssim E(t)^{1/2}$$
and
$$ \| \nabla u^{\nl} \|_{L^3_x(\R^3)} \lesssim \| \nabla u^{\nl} \|_{L^2_x(\R^3)}^{1/2} \| \nabla^2 u^{\nl} \|_{L^2_x(\R^3)}^{1/2}
\lesssim E(t)^{1/4} \| \nabla^2 u^{\nl} \|_{L^2_x(\R^3)}^{1/2}.$$
We conclude that
$$\partial_t E(t) \leq -\frac{1}{2} \| \nabla^2 u^{\nl} \|_{L^2_x(\R^3)}^2 + O\left( (A^2 + E(t)) (A^2 + E(t)^{1/2} \|\nabla^2 u^{\nl}\|_{L^2_x(\R^3)} ) \right)$$
and hence by Young's inequality
\begin{equation}\label{pet}
\partial_t E(t) \leq -\frac{1}{4} \| \nabla^2 u^{\nl} \|_{L^2_x(\R^3)}^2 + O( (A^2 + E(t)) A^2 + (A^2 + E(t))^2 E(t) ).
\end{equation}
In particular we have
\begin{equation}\label{eat} \partial_t E(t) \leq O( A^4 + A^4 E(t) + E(t)^3 ).
\end{equation}
From \eqref{122} we have
$$ \int_0^1 E(t)\ dt \lesssim A^4,$$
and hence by the pigeonhole principle, we can find a time $t_1 \in [0,1/2]$ such that
$$ E(t_1)  \lesssim A^4.
$$
A standard continuity argument using \eqref{eat} then gives $E(t) \lesssim A^4$ for $t \in [t_1, t_1 + c A^{-8}] = [\tau(0), \tau(1)]$, where $\tau(s) \coloneqq t_1 + scA^{-8}$ and $c>0$ is a small absolute constant.  Inserting this back into \eqref{pet} one has
$$ \partial_t E(t) \leq -\frac{1}{4} \| \nabla^2 u^{\nl} \|_{L^2_x(\R^3)}^2 + O( A^{12} ) $$
and hence by the fundamental theorem of calculus
\begin{equation}\label{nog}
 \int_{\tau(0)}^{\tau(1)} \int_{\R^3} |\nabla^2 u^{\nl}|^2\ dx dt \lesssim A^4.
\end{equation}
Thus we have
\begin{equation}\label{man}
\| \nabla u^{\nl} \|_{L^\infty_t L^2_x([\tau(0), \tau(1)] \times \R^3)} + \| \nabla^2 u^{\nl} \|_{L^2_t L^2_x([\tau(0), \tau(1)] \times \R^3)} \lesssim A^2.
\end{equation}
From the Gagliardo-Nirenberg inequality
\begin{equation}\label{unl}
 \| u^{\nl} \|_{L^\infty_x} \lesssim \| \nabla u^{\nl} \|_{L^2_x}^{1/2} \| \nabla^2 u^{\nl} \|_{L^2_x}^{1/2}
\end{equation}
and H\"older's inequality, one concludes in particular that
$$ \| u^{\nl} \|_{L^4_t L^\infty_x([\tau(0), \tau(1)] \times \R^3)} \lesssim A^2$$
and hence by \eqref{very}
\begin{equation}\label{fract}
 \| u \|_{L^4_t L^\infty_x([\tau(0), \tau(1)] \times \R^3)} \lesssim A^2;
\end{equation}
also from Sobolev embedding and \eqref{man} one has
$$ \| \nabla u^{\nl} \|_{L^2_t L^6_x([\tau(0), \tau(1)] \times \R^3)} \lesssim A^2$$
and hence by \eqref{very}
\begin{equation}\label{nob}
 \| \nabla u \|_{L^2_t L^6_x([\tau(0), \tau(1)] \times \R^3)} \lesssim A^2.
\end{equation}
These are subcritical regularity estimates and can now be iterated to obtain even higher regularity.  For $t \in [\tau(0.1), \tau(1)]$, we see from \eqref{leray} that
\begin{equation}\label{ler}
 u(t) = e^{(t-\tau(0)) \Delta} u(\tau(0)) - \int_{\tau(0)}^t e^{(t-t')\Delta} \mathbb{P} \nabla \cdot (u \otimes u)(t')\ dt'.
\end{equation}
From \eqref{bern-3} the operator $e^{(t-t')\Delta} \mathbb{P} \nabla  \cdot$ has norm $O( (t-t')^{-1/2} )$ on $L^\infty_x$, while $e^{(t-\tau(0))\Delta}$ maps $L^3_x$ to $L^\infty_x$ with norm $O( (t-\tau(0))^{-1/2} ) = O( A^{O(1)} )$.  We conclude from \eqref{able} that
$$ \|u(t) \|_{L^\infty_x(\R^3)} \lesssim A^{O(1)} + \int_{\tau(0)}^t (t-t')^{-1/2} \| u(t) \|_{L^\infty_x(\R^3)}^2\ dt'.$$
From \eqref{fract} and Young's convolution inequality, we conclude that
$$ \| u \|_{L^8_t L^\infty_x([\tau(0.1), \tau(1)] \times \R^3)} \lesssim A^{O(1)}$$
Repeating the above argument, we now also see for $t \in [\tau(0.2), \tau(1)]$ that
$$ \|u(t) \|_{L^\infty_x(\R^3)} \lesssim A^{O(1)} + \int_{\tau(0.1)}^t (t-t')^{-1/2} \| u(t) \|_{L^\infty_x(\R^3)}^2\ dt'$$
so from H\"older's inequality we conclude that
\begin{equation}\label{leo}
 \| u \|_{L^\infty_t L^\infty_x([\tau(0.2), \tau(1)] \times \R^3)} \lesssim A^{O(1)}.
\end{equation}
Now we differentiate \eqref{leray} to conclude that
$$
\nabla u(t) = \nabla e^{(t-\tau(0.2)) \Delta} u(\tau(0.2)) - \int_{\tau(0.2)}^t \nabla e^{(t-t')\Delta} \mathbb{P} \nabla \cdot (u \otimes u)(t')\ dt'$$
for $t \in [\tau(0.3), \tau(1)]$.  From \eqref{able}, the first term $\nabla e^{(t-\tau(0.2)) \Delta} u(\tau(0.2))$ has an $L^\infty_x$ norm of $O( A^{O(1)} )$.  From \eqref{bern-3}, the operator $\nabla e^{(t-t')\Delta} \mathbb{P}$ maps $L^6_x$ to $L^\infty_x$ with norm $O((t-t')^{-3/4})$, thus
$$ \| \nabla u(t) \|_{L^\infty_x(\R^3)} \lesssim A^{O(1)} + \int_{\tau(0.2)}^t (t-t')^{-3/4} \| \nabla \cdot (u \otimes u)(t') \|_{L^6_x(\R^3)}\ dt'.$$
From \eqref{nob}, \eqref{leo}, Leibniz and H\"older one has
$$\| \nabla \cdot (u \otimes u) \|_{L^2_t L^6_x([\tau(0.2), \tau(1)] \times \R^3)} \lesssim A^{O(1)}$$
and hence by fractional integration
$$ \| \nabla u \|_{L^4_t L^\infty_x([\tau(0.3), \tau(1)] \times \R^3)} \lesssim A^{O(1)}.$$
From this, \eqref{leo}, Leibniz, and H\"older one has
$$ \| \nabla \cdot (u \otimes u) \|_{L^4_t L^\infty_x([\tau(0.3), \tau(1)] \times \R^3)} \lesssim A^{O(1)}.$$
By \eqref{bern-3}, $\nabla e^{(t-t')\Delta} \mathbb{P}$ has an operator norm of $O((t-t')^{-1/2})$ on $L^\infty_x$, thus
$$ \| \nabla u(t) \|_{L^\infty_x(\R^3)} \lesssim A^{O(1)} + \int_{\tau(0.3)}^t (t-t')^{-1/2} \| \nabla \cdot (u \otimes u)(t') \|_{L^\infty_x(\R^3)}\ dt'$$
for $t \in [\tau(0.4), \tau(1)]$, and hence by H\"older's inequality
$$ \| \nabla u \|_{L^\infty_t L^\infty_x([\tau(0.4), \tau(1)] \times \R^3)} \lesssim A^{O(1)}.$$
From the vorticity equation \eqref{vorticity}, we now have
$$ \partial_t \omega = \Delta \omega + O( A^{O(1)} (|\omega| + |\nabla \omega|) )$$
on $[\tau(0.4), \tau(1)] \times \R^3$, and also $\omega = O(A^{O(1)})$ on this slab.  Standard parabolic regularity estimates (see e.g., \cite{lady}) then give
$$ \| \nabla \omega \|_{L^\infty_t L^\infty_x([\tau(0.5), \tau(1)] \times \R^3)} \lesssim A^{O(1)}.$$
Setting $I' \coloneqq [\tau(0.5), \tau(1)]$, we obtain the claim (iii).  We remark that it is also possible to control higher derivatives $\nabla^j u, \nabla^j \omega$ with $j>1$, for instance by using parabolic Schauder estimates in H\"older spaces, but we will not need to do so here.

Now we establish (iv).  Let $t_1,x_1,N_1$ be as in that part of the proposition.  By rescaling we may normalise $N_1=1$, and by translation invariance we may normalise $(t_1,x_1) = (0,0)$, so that $t_0-T \leq - \frac{T}{2} \leq -\frac{A_3^{2}}{2}$, so in particular $[-2A_3,0] \subset [t_0-T,t_0]$.   From \eqref{n1} we have
\begin{equation}\label{n1-norm}
 |P_1 u(0,0)| \geq A_1^{-1}.
\end{equation}
Assume for contradiction that the claim fails, then we have
$$ \| P_N u \|_{L^\infty_t L^\infty_x( [-A_3, -A_3^{-1}] \times B(0, A_4) )} \leq A_1^{-1} N$$
for all $A_2^{-1} \leq N \leq A_2$.  From \eqref{pant} and the fundamental theorem of calculus in time, we can enlarge the time interval to reach $t=0$, so that
$$ \| P_N u \|_{L^\infty_t L^\infty_x( [-A_3, 0] \times B(0, A_4) )} \lesssim A_1^{-1} N.$$

Suppose now that $N \geq A_2^{-1}$.  For $t \in [-A_3,0]$, we can use Duhamel's formula, \eqref{leray}, and the triangle inequality to write
\begin{align*}
\|P_N u(t)\|_{L^{3/2}_x(B(0, A_4))} &\leq \| e^{(t+2A_3)\Delta} P_N u(-2A_3) \|_{L^{3/2}_x(B(0,A_4))} \\
&\quad + \int_{-2A_3}^t \| e^{(t-t')\Delta} P_N \nabla \cdot (u(t') \otimes u(t')) \|_{L^{3/2}_x(\R^3)}\ dt'.
\end{align*}
From \eqref{bern-2}, $e^{(t+2A_3)\Delta} P_N$ has an operator norm of $O( \exp( - N^2 A_3 / 20 ) )$ on $L^3_x$, and 
$e^{(t-t')\Delta} P_N \nabla \cdot $ similarly has an operator norm of $O( N \exp( - N^2 (t-t')/20))$ on $L^{3/2}_x$.  Applying \eqref{able} and H\"older's inequality, we conclude that
$$ \|P_N u(t)\|_{L^{3/2}_x(B(0, A_4))} \lesssim A A_4 \exp( - N^2 A_3 / 20 ) + A^2 N^{-1}$$
and hence in the range $N \geq A_2^{-1}$ we have
\begin{equation}\label{meme}
 \| P_N u \|_{L^\infty_t L^{3/2}_x( [-A_3, 0] \times B(0, A_4) )} \lesssim A^2 N^{-1}.
\end{equation}

Now suppose that $N \geq A_2^{-1/2}$.  For $t \in [-A_3/2,0]$, we again use Duhamel's formula, \eqref{leray} and the triangle inequality to write
\begin{align*}
\|P_N u(t)\|_{L^1_x(B(0, A_4/2))} &\leq \| e^{(t+A_3)\Delta} P_N u(-A_3) \|_{L^1_x(B(0,A_4/2))} \\
&\quad + \int_{-A_3}^t \| e^{(t-t')\Delta} P_N \nabla \cdot \tilde P_N (u(t') \otimes u(t')) \|_{L^1_x(B(0,A_4/2))}\ dt'.
\end{align*}
From \eqref{bern-2}, \eqref{able}, and H\"older as before we have
$$ \| e^{(t+A_3)\Delta} P_N u(-A_3) \|_{L^1_x(B(0,A_4/2))} \lesssim A A_4^2 \exp( - N^2 A_3 / 40 ).$$
From \eqref{local} one has
\begin{align*} &\| e^{(t-t')\Delta} P_N \nabla \cdot \tilde P_N (u(t') \otimes u(t')) \|_{L^1_x(B(0,A_4/2))}
\lesssim N \exp(- N^2 (t-t')/20) \\
&\quad \times \left( \| \tilde P_N (u(t') \otimes u(t')) \|_{L^1_x(B(0,3A_4/4)} + A_4^{-50} A_4^{1/2} \| \tilde P_N (u(t') \otimes u(t')) \|_{L^{3/2}_x(\R^3)} \right)
\end{align*}
and hence by \eqref{able}
$$ \| P_N u \|_{L^\infty_t L^{1}_x( [-A_3/2, 0] \times B(0, A_4/2) )} \lesssim A_4^{-40} +
N^{-1} \| \tilde P_N (u(t') \otimes u(t')) \|_{L^\infty_t L^1_x([-A_3, 0]  \times B(0,3A_4/4))}.$$
Since $\tilde P_N( P_{\leq N/100} u(t') \otimes P_{\leq N/100} u(t') )$ vanishes, we can write
\begin{equation}\label{pind}
\tilde P_N (u(t') \otimes u(t')) = \tilde P_N (P_{>N/100} u(t') \otimes u(t')) + \tilde P_N (P_{\leq N/100} u(t') \otimes P_{>N/100} u(t')).
\end{equation}
From \eqref{local}, \eqref{able} we have
$$\| \tilde P_N (P_{>N/100} u(t') \otimes u(t')) \|_{L^\infty_t L^1_x([-A_3, 0]  \times B(0,3A_4/4))}
\lesssim \| P_{>N/100} u(t') \otimes u(t') \|_{L^\infty_t L^1_x([-A_3, 0]  \times B(0,A_4))} + A_4^{-40}.$$
From \eqref{meme} (and the triangle inequality) as well as \eqref{able} and H\"older's inequality, we thus have
$$\| \tilde P_N (P_{>N/100} u(t') \otimes u(t')) \|_{L^\infty_t L^1_x([-A_3, 0]  \times B(0,3A_4/4))} \lesssim A^3 N^{-1}.$$
Similarly for the other component of \eqref{pind}.  We conclude that
\begin{equation}\label{mold}
 \| P_N u \|_{L^\infty_t L^{1}_x( [-A_3/2, 0] \times B(0, A_4/2) )} \lesssim A^3 N^{-2}
\end{equation}
for all $N \geq A_2^{-1/2}$.

Now suppose that $A_2^{-1/3} \leq N \leq A_2^{1/3}$.  For $t \in [-A_3/3,0]$, we again use Duhamel's formula, \eqref{leray}, and the triangle inequality as before to write
\begin{align*}
& \|P_N u(t)\|_{L^2_x(B(0, A_4/4))} \leq \| e^{(t+A_3/2)\Delta} P_N u(-A_3/2) \|_{L^2_x(B(0,A_4/4))} \\
&\quad + \int_{-A_3/2}^t \| e^{(t-t')\Delta} P_N \nabla \cdot \tilde P_N (u(t') \otimes u(t')) \|_{L^2_x(B(0,A_4/4))}\ dt'.
\end{align*}
Arguing as before we have
$$ \| e^{(t+A_3/2)\Delta} P_N u(-A_3/2) \|_{L^2_x(B(0,A_4/4))} \lesssim A A_4^{1/2} \exp( - N^2 A_3 / 120 )$$
and
\begin{align*} &\| e^{(t-t')\Delta} P_N \nabla \cdot \tilde P_N (u(t') \otimes u(t')) \|_{L^2_x(B(0,A_4/4))} \lesssim
N^{5/2} \exp( -N^2 (t-t')/20) \\
&\quad \left( \| \tilde P_N (u(t') \otimes u(t')) \|_{L^1_x(B(0,A_4/3))} + A_4^{-50} N^{-1} \| \tilde P_N (u(t') \otimes u(t')) \|_{L^{3/2}_x(\R^3)} \right) 
\end{align*}
and thus
\begin{equation}\label{panu}
 \| P_N u \|_{L^\infty_t L^2_x( [-A_3/4, 0] \times B(0, A_4/4) )} \lesssim A_4^{-40} +
N^{1/2} \| \tilde P_N (u(t') \otimes u(t')) \|_{L^\infty_t L^1_x([-A_3/2, 0]  \times B(0,A_4/3))}.
\end{equation}
We can split $\tilde P_N (u(t') \otimes u(t'))$ into $O(1)$ paraproduct terms of the form $\tilde P_N(P_{N'} u(t') \otimes P_{\leq N/100} u(t'))$ where $N' \sim N$, $O(1)$ terms of the form $\tilde P_N(P_{\leq N/100} u(t') \otimes P_{N'} u(t') )$, and a sum of the form $\sum_{N_1 \sim N_2 \gtrsim N} 
\tilde P_N(P_{N_1} u(t') \otimes P_{N_2} u(t'))$.  For the ``high-low'' term $\tilde P_N(P_{N'} u(t') \otimes P_{\leq N/100} u(t'))$, we observe from \eqref{meme}, \eqref{pant} and the triangle inequality that
$$ \| P_{\leq N/100} u \|_{L^\infty_t L^{3/2}_x( [-A_3, 0] \times B(0, A_4) )} \lesssim A^2 N^{-1}.$$
Using this, \eqref{local}, \eqref{mold} (for the high frequency factor $P_{N'} u(t')$), and H\"older's inequality, we conclude that the contribution of this term to \eqref{panu} is $O( A^3 A_1^{-1} N^{-1/2})$.  Similarly for the ``low-high'' term $\tilde P_N(P_{\leq N/100} u(t') \otimes P_{N'} u(t') )$.  Finally, to control the ``high-high'' term $\sum_{N_1 \sim N_2 \gtrsim N} 
\tilde P_N(P_{N_1} u(t') \otimes P_{N_2} u(t'))$, we use \eqref{local}, the triangle inequality, H\"older, and \eqref{mold} to control this contribution by
$$ \lesssim A_4^{-40} + N^{1/2} \sum_{N_1 \sim N_2 \gtrsim N} A^3 N_1^{-2} 
 \| P_{N_2} u \|_{L^\infty_t L^{3/2}_x( [-A_3, 0] \times B(0, A_4) )}.$$
Using \eqref{meme} when $N_2 \leq A_2$ and \eqref{pant} otherwise, we see that this term also contributes $O( A^3 A_1^{-1} N^{-1/2})$.  We have thus shown that
\begin{equation}\label{thus}
  \| P_N u \|_{L^\infty_t L^2_x( [-A_3/4, 0] \times B(0, A_4/4) )} \lesssim A^3 A_1^{-1} N^{-1/2}
	\end{equation}
for $A_2^{-1/3} \leq N \leq A_2^{1/3}$.

We now return once again to Duhamel's formula to estimate
$$ |P_1 u(0,0)| \leq |e^{A_3 \Delta/4} P_1 u(-A_3/4)|(0) + \int_{-A_3/4}^0 |e^{(t-t')\Delta} P_1 \nabla \cdot \tilde P_1 (u(t') \otimes u(t'))|(0)\ dt'.$$
From \eqref{bern-2}, \eqref{able}, the first term is $O( A_3 \exp( -A_3^{2} / 320 ) )$, thus from \eqref{n1-norm} we have
$$
\int_{-A_3/4}^0 |e^{(t-t')\Delta} P_1 \nabla \cdot \tilde P_1 (u(t') \otimes u(t'))|(0)\ dt' \gtrsim A_1^{-1}.$$
From \eqref{local}, \eqref{able} one has
$$|e^{(t-t')\Delta} P_1 \nabla \cdot \tilde P_1 (u(t') \otimes u(t'))|(0) \lesssim \exp(-(t-t')/20) ( \| 
\| \tilde P_1 (u(t') \otimes u(t')) \|_{L^1_x(B(0,A_1))} + A_1^{-50} )$$
and hence by the pigeonhole principle we have
$$ \| \tilde P_1 (u(t') \otimes u(t')) \|_{L^1_x(B(0,A_1))} \gtrsim A_1^{-1}.$$
for some $-A_3/4 \leq t' \leq 0$.

Fix this $t'$.  As before, we can split $\tilde P_1 (u(t') \otimes u(t'))$ into the sum of $O(1)$ ``low-high'' terms $\tilde P_1(P_{N'} u(t') \otimes P_{\leq 1/100} u(t'))$ and ``high-low'' terms $\tilde P_1(P_{\leq 1/100} u(t') \otimes P_{N'} u(t'))$ with $N' \sim 1$, plus a ``high-high'' term $\sum_{N_1 \sim N_2 \gtrsim 1} \tilde P_1(P_{N_1} u(t') \otimes P_{N_2} u(t'))$.  For the first two types of terms, we use \eqref{local} (for frequencies larger than $A_2^{-1/3}$), \eqref{able}, and H\"older to conclude that
$$ \| P_{\leq 100} u(t') \|_{L^2_x( B(0, 2A_1) )} \lesssim A^3 A_1^{-1}$$
and then from \eqref{thus}, \eqref{local} (and \eqref{able} to control the global contribution of \eqref{local}) we see that the contribution of those two types of terms is $O( A^6 A_1^{-2})$.  For the high-high terms with $N_1,N_2 \leq A_2^{1/3}$, we again use \eqref{thus}, \eqref{local}, \eqref{able} to again obtain a bound of $O( A^6 A_1^{-2})$.  For the cases when $N_1,N_2 \gtrsim A_2^{1/3}$, we use \eqref{meme}, \eqref{able} to obtain a much better bound $O( A^3 A_2^{-1/3})$.  Putting all this together we obtain
$$ A_1^{-1} \lesssim A^6 A_1^{-2}$$
giving the required contradiction.  This establishes (iv).

Now we prove (v).  We may assume that $A_4 N_0^{-2} \leq A_4^{-1} T$, since the claim is trivial otherwise.  Thus we have $N_0 \geq A_4 T^{-1/2}$.

 By iteratively applying (iv), we may find a sequence $(t_0,x_0), (t_1,x_1), \dots, (t_n,x_n) \in [t_0-T,t_0]$ and $N_0,N_1,\dots,N_n>0$ for some $n \geq 1$, with the properties
\begin{align}
|P_{N_i} u(t_i,x_i)| &\geq A_1^{-1} N_i \label{iter-1}\\
A_2^{-1} N_{i-1} &\leq N_i \leq A_2 N_{i-1} \label{iter-2}\\
A_3^{-1} N_{i-1}^{-2} &\leq t_{i-1} - t_i \leq A_3 N_{i-1}^{-2} \label{iter-3}\\
|x_i - x_{i-1}| &\leq A_4 N_{i-1}^{-1} \label{iter-4}
\end{align}
for all $i=1,\dots,n$, with $t_i \in [t_0-T/2,t_0]$ and $N_i \geq A_3 T^{-1/2}$ for $i=0,\dots,n-1$ and either $t_n \in [t_0-T,t_0-T/2]$ or $N_n < A_3 T^{-1/2}$.  To see that this process terminates at a finite $n$, observe from the classical nature of $u$ that the $P_{N_i} u(t_i,x_i)$ are uniformly bounded in $i$, which by \eqref{iter-1} implies that the $N_i$ are uniformly bounded above, and hence by \eqref{iter-3} $t_{i-1}-t_i$ are uniformly bounded below; since $t_i$ must stay above $t_0 - T$, we obtain the required finite time termination.  By \eqref{iter-3}, the first time $t_1$ after $t_0$ lies in the interval
$$ t_1 \in [t_0 - A_2 N_0^{-2}, t_0 - A_2^{-1} N_0^{-2}].$$

If $N_n < A_3 T^{-1/2}$, then by \eqref{iter-3}, \eqref{iter-2} 
$$t_{n-1} - t_n \geq A_3^{-1} N_{n-1}^{-2} \geq A_3^{-2} N_n^{-2} \geq A_3^{-4} (t_{n-1}-t_n) \leq A_3^{-4} T$$
so in particular $t_n \leq t_0 - A_3^{-4} T$.  Of course this inequality also holds if $t_n \in [t_0-T,t_0-T/2]$.  In either case, we see from the hypothesis $A_4 N_0^{-2} \leq T_1 \leq A_4^{-1} T$ that
$$ t_n < t - T_1 \leq t_1.$$
Let $m$ be the largest index for which $t_m \geq t - T_1$, thus $1 \leq m \leq n-1$ and $t_{m+1} > t-T_1$.   By telescoping \eqref{iter-3}, we conclude that
\begin{equation}\label{aim}
 \sum_{i=0}^m A_3 N_i^{-2} = \sum_{i=1}^{m+1} A_3 N_{i-1}^{-2} \geq t-t_{m+1} \geq T_1.
\end{equation}
On the other hand, from \eqref{iter-1} and \eqref{pant} we have
$$ |P_{N_i} u(t, x_i)| \gtrsim A_1^{-1} N_i$$
for $t \in [t_i - A_1^{-2} N_i^{-2}, t_i]$; as $P_{N_i}$ is bounded on $L^\infty$ by \eqref{bern}, this implies that
$$ \| P_{N_i} u(t) \|_{L^\infty_x(\R^3)} \gtrsim A_1^{-1} N_i$$
for such $t$.  From \eqref{iter-3} we see that the time intervals $[t_i - A_1^{-2} N_i^{-2}, t_i]$ are disjoint and lie in $[t-T_1,t]$ for $i=0,\dots,m-1$.  Applying \eqref{bts}, we conclude that
$$ \sum_{i=0}^{m-1} A_1^{-1} N_i \times A_1^{-2} N_i^{-2} \lesssim A^4 T_1^{1/2} $$
and thus
$$ \sum_{i=0}^{m-1} N_i^{-1} \lesssim A_1^{4} T_1^{1/2}.$$
Using \eqref{iter-2} to extend this sum to the final index $m$, we conclude that
\begin{equation}\label{a28}
 \sum_{i=0}^{m} N_i^{-1} \lesssim A_2^{2} T_1^{1/2}.
\end{equation}
Comparing this with \eqref{aim}, we conclude that there exists $i=0,\dots,m$ such that
$$ N_i^{-1} \gtrsim A_3^{-2} T_1^{1/2}.$$
Since $A_4 N_0^{-2} \leq A_4^{-1} T$, $i$ cannot be zero, thus $1 \leq i \leq m$.
From \eqref{iter-3}, \eqref{iter-2} we have
\begin{align*}
 t_0 - t_i &\geq t_{i-1} - t_i \\
&\geq A_3^{-1} N_{i-1}^{-2}  \\
&\geq A_3^{-2} N_i^{-2} \\
&\gtrsim A_3^{-6} T_1.
\end{align*}
Since $t_0-t_i$ is also bounded by $T_1$, we also have from \eqref{iter-3} that $A_3^{-1} N_i^{-2} \leq T_1$, thus $N_i \geq A_3^{-1/2} T_1^{-1/2}$.  Finally, from telescoping \eqref{iter-4} and using \eqref{a28}, we conclude that
$$ |x_i - x_0| \lesssim A_4^{2} T_1^{1/2},$$
and the claim follows.

Finally, we prove (vi), which is the most difficult estimate.  The claim is invariant with respect to time translation and rescaling, so we may assume that $[t_0-T',t_0] = [0,1]$.  In particular $[-1,1] \subset [t_0-T,t_0]$, so we may decompose $u = u^{\lin} +u^{\nl}$ as before with the estimates \eqref{vl2}, \eqref{very}, \eqref{122}.

From \eqref{122} we can find a time $t_1 \in [-1/2,0]$ such that
$$ \int_{\R^3} |\nabla u^{\nl}(t_1,x)|^2\ dx \lesssim A^4.$$
Fix this time $t_1$.  From \eqref{very} we thus have
$$ \int_{\R^3} |\nabla u^{\nl}(t_1,x)|^2 + \sum_{j=0}^4 | \nabla^j u^{\lin}(t_1,x)|^3\ dx \lesssim A^4.$$
By the pigeonhole principle, we can thus find a scale
\begin{equation}\label{alower}
 A_6^{100} R_0 \leq R \leq \exp(A_6^{O(1)}) R_0
\end{equation}
such that
\begin{equation}\label{amp}
 \int_{A_6^{-10} R \leq |x| \leq A_6^{10} R} |\nabla u^{\nl}(t_1,x)|^2 + \sum_{j=0}^4 | \nabla^j u^{\lin}(t_1,x)|^3\ dx \lesssim A_6^{-10}.
\end{equation}
Fix this $R$.  We now propagate this estimate forward in time to $[t_1,1]$.  We first achieve this for the linear component $u^{\lin}$, which is straightforward.  From Sobolev embedding we have
$$ \sup_{A_6^{-9} R \leq |x| \leq A_6^{9} R} |\nabla^j u^{\lin}(t_1,x)| \lesssim A_6^{-3}$$
for $j=0,1,2$.  Since $\nabla^j u^{\lin}$ solves the linear heat equation, we conclude from this, \eqref{local}, and \eqref{very} that
\begin{equation}\label{linear-prop}
\sup_{t_1 \leq t \leq 1} \sup_{A_6^{-8} R \leq |x| \leq A_6^{8} R} |\nabla^j u^{\lin}(t,x)| \lesssim A_6^{-3}
\end{equation}
for $j=0,1,2$.  This estimate (when combined with \eqref{very}) will suffice to control all the terms involving the linear component $u^{\lin}$ of the velocity (or the analogous component $\omega^{\lin} \coloneqq \nabla \times u^{\lin}$ of the vorticity).

The vorticity $\omega \coloneqq \nabla \times u$ obeys the vorticity equation \eqref{vorticity}.
On $[t_1,1] \times \R^3$, we decompose $\omega = \omega^{\lin} + \omega^{\nl}$, where $\omega^{\lin} \coloneqq \nabla \times u^{\lin}$ is the linear component of the vorticity and $\omega^{\nl} \coloneqq \nabla \times u^{\nl}$ is the nonlinear component.  As $\omega^{\lin}$ solves the heat equation, we have
\begin{equation}\label{omeganl}
 \partial_t \omega^{\nl} = \Delta \omega^{\nl} - (u \cdot \nabla) \omega + (\omega \cdot \nabla) u.
\end{equation}
As in \cite[\S 10]{tao-local}, we apply the energy method to this equation with a carefully chosen time-dependent cutoff function.  Namely, let 
\begin{equation}\label{rat}
 R_- \in [A_6^{-8} R, 2A_6^{-8} R]; \quad R_+ \in [A_6^{8} R/2, A_6^{8} R]
\end{equation}
be scales to be chosen later, and define the time-dependent radii
\begin{align*}
 R_-(t) &\coloneqq R_- + C_0 \int_{t_1}^t (A_6 + \| u(t) \|_{L^\infty_x(\R^3)})\ dt \\
 R_+(t) &\coloneqq R_+ - C_0 \int_{t_1}^t (A_6 + \| u(t) \|_{L^\infty_x(\R^3)})\ dt
\end{align*}
that start at $R_-, R_+$ respectively, and contract inwards at a rate faster than the velocity field $u$.
From the bounded total speed property \eqref{bts}, \eqref{alower}, and the hypothesis $R_0 \geq 1$, we conclude that
$$ R_-(t) \in [A_6^{-8} R, 3A_6^{-8} R]; \quad R_+(t) \in [A_6^{8} R/3, A_6^{8} R]$$
for all $t \in [t_1,1]$.

For $t \in [t_1,1]$, we define the local enstrophy
$$ E(t) \coloneqq \frac{1}{2} \int_{\R^3} |\omega^{\nl}(t,x)|^2 \eta(t,x)\ dx$$
where $\eta$ is the time-varying cutoff
$$ \eta(t,x) \coloneqq \max( \min( A_6, |x| - R_-(t), R_+(t) - |x| ), 0 ),$$
thus $\eta$ is supported in the annulus $\{ R_-(t) \leq |x| \leq R_+(t) \}$, is Lipschitz with norm $1$, and equals $A_6$ in the smaller annulus $\{ R_-(t) + A_6 \leq |x| \leq R_+(t) - A_6 \}$.  From \eqref{amp} we have the initial bound
\begin{equation}\label{e-init}
 E(t_1) \lesssim A_6^{-9}.
\end{equation}
Now we control the time derivative $\partial_t E(t)$ for $t \in [t_1,1]$.  From \eqref{omeganl} and integration by parts we have
$$ \partial_t E(t) = -Y_1(t) - Y_2(t) + Y_3(t) + Y_4(t) + Y_5(t) + Y_6(t) + Y_7(t) + Y_8(t) + Y_9(t)$$
where $Y_1$ is the dissipation term
$$ Y_1(t) \coloneqq \int_{\R^3} |\nabla \omega^{\nl}(t,x)|^2\ dx,$$
$Y_2(t)$ is the recession term
$$ Y_2(t) \coloneqq -\frac{1}{2} \int_{\R^3} |\omega^{\nl}(t,x)|^2 \partial_t \eta(t,x)\ dx,$$
$Y_3(t)$ is the heat flux term
$$ Y_3(t) \coloneqq \frac{1}{2} \int_{\R^3} |\omega^{\nl}(t,x)|^2 \Delta \eta(t,x)\ dx, $$
$Y_4(t)$ is the transport term
$$ Y_4(t) \coloneqq \frac{1}{2} \int_{\R^3} |\omega^{\nl}(t,x)|^2 u(t,x) \cdot \nabla \eta(t,x)\ dx,$$
$Y_5(t)$ is a correction to the transport term arising from $\omega^{\lin}$,
$$ Y_5(t) \coloneqq - \int_{\R^3} \omega^{\nl}(t,x) \cdot (u(t,x) \cdot \nabla) \omega^{\lin}(t,x)\ \eta(t,x)\ dx,$$
$Y_6(t)$ is the main nonlinear term
$$ Y_6(t) \coloneqq \int_{\R^3} \omega^{\nl}(t,x) \cdot (\omega^{\nl}(t,x) \cdot \nabla) u^{\nl}(t,x)\ \eta(t,x)\ dx$$
and $Y_7(t),Y_8(t),Y_9(t)$ are corrections to the transport term arising from the $u^{\lin}$ and $\omega^{\lin}$,
\begin{align*}
Y_7(t) &\coloneqq \int_{\R^3} \omega^{\nl}(t,x) \cdot (\omega^{\nl}(t,x) \cdot \nabla) u^{\lin}(t,x)\ \eta(t,x)\ dx\\
Y_8(t) &\coloneqq \int_{\R^3} \omega^{\nl}(t,x) \cdot (\omega^{\lin}(t,x) \cdot \nabla) u^{\nl}(t,x)\ \eta(t,x)\ dx\\
Y_9(t) &\coloneqq \int_{\R^3} \omega^{\nl}(t,x) \cdot (\omega^{\lin}(t,x) \cdot \nabla) u^{\lin}(t,x)\ \eta(t,x)\ dx.
\end{align*}
Here all derivatives of the Lipschitz function $\eta$ are interpreted in a distributional sense.
We now aim to control $Y_3(t),\dots,Y_9(t)$ in terms of $Y_1(t), Y_2(t), E(t)$, and some other quantities that are well controlled.  From definition of $\eta$ we see that 
$$-\partial_t \eta(t,x) = C_0 (A_6 + \| u(t) \|_{L^\infty_x(\R^3)}) |\nabla \eta(t,x)|$$
so in particular we have that $Y_2(t)$ is non-negative and
$$ Y_4(t) \leq C_0^{-1} Y_2(t).$$
A direct computation of $\Delta \eta$ in polar coordinates yields the bound
\begin{align*}
 Y_3(t) &\lesssim \int_{|x| \in [R_-(t),R_-(t)+A_6] \cup [R_+(t)-A_6,R_+(t)]} \frac{|\omega^{\nl}(t,x)|^2}{|x|}\ dx  \\
&\quad + \sum_{r = R_-(t), R_-(t)+A_6, R_+(t)-A_6, R_+(t)} r^2 \int_{S^2} |\omega^{\nl}(t,r\theta)|\ d\theta
\end{align*}
where $d\theta$ is surface measure on the sphere (in fact the $r = R_-(t)+A_6, R_+(t)-A_6$ terms are non-positive and could be discarded if desired).  This expression is difficult to estimate for fixed choices of $R_-, R_+$.  However, if selects $R_-, R_+$ uniformly at random from the range \eqref{rat}, we see from Fubini's theorem that the expected value $\E |Y_3|$ of $|Y_3|$ can be estimated by
$$ \E |Y_3(t)| \lesssim A_6 \int_{|x| \in 
[A_6^{-8} R, 3A_6^{-8} R] \cup [A_6^{8} R/3, A_6^{8} R]} \frac{|\omega^{\nl}(t,x)|^2}{|x|^2}\ dx $$
and hence by \eqref{122}, \eqref{alower}
$$ \E \int_{t_1}^1 |Y_3(t)|\ dt \lesssim A_6^{-10}$$
(say).  Thus we can select $R_-, R_+$ so that
\begin{equation}\label{yat}
 \int_{t_1}^1 |Y_3(t)|\ dt \lesssim A_6^{-10}
\end{equation}
and we shall now do so.

To treat $Y_5(t)$, we use Young's inequality to bound
$$ Y_5(t) \lesssim E(t) + \int_{\R^3} |(u \cdot \nabla) \omega^{\lin}|^2\ \eta\ dx.$$
Using \eqref{able}, \eqref{very}, \eqref{linear-prop}, H\"older's inequality, we then have
$$ Y_5(t) \lesssim E(t) + A_6^{-2}$$
(say).

In a similar vein, from \eqref{linear-prop} and H\"older's inequality one has
$$ Y_7(t) \lesssim E(t)$$
(with plenty of room to spare) and from Young's inequality one has
$$ Y_9(t) \lesssim E(t) + \int_{\R^3} |(\omega^{\lin} \cdot \nabla) u^{\lin}|^2\ \eta(t,x)\ dx$$
and hence by \eqref{able}, \eqref{very}, \eqref{linear-prop}, and H\"older
$$ Y_9(t) \lesssim E(t) + A_6^{-2}.$$
For $Y_8$, we again use Young's inequality to bound
$$Y_8(t) \lesssim E(t) + \int_{\R^3} |(\omega^{\lin} \cdot \nabla) u^{\nl}|^2\ \eta(t,x)\ dx$$
and hence by \eqref{linear-prop}
$$Y_8(t) \lesssim E(t) + Y_{10}(t)$$
where
$$ Y_{10}(t) \coloneqq A_6^{-3} \int_{\R^3} |\nabla u^{\nl}(t,x)|^2\ dx.$$
Observe from \eqref{122} that
\begin{equation}\label{yat-2}
 \int_{t_1}^1 |Y_{10}(t)|\ dt \lesssim A_6^{-2}.
\end{equation}

We are left with estimation of the most difficult term $Y_6(t)$.  Following \cite{tao-local}, we cover the annulus $\{ R_-(t) \leq |x| \leq R_+(t) \}$ by a boundedly overlapping Whitney decomposition of balls $B = B( x_B, r_B )$, where the radius $r_B$ of the ball is given as $r_B \coloneqq \frac{1}{100} \eta(t,r_B)$.  In particular, we have $\eta(t,x) \sim r_B$ on the dilate $10B = B(x_B, r_B)$ of the ball.  We can then write
$$ Y_6(t) \sim \sum_B r_B \int_B |\omega^{\nl}|^2 |\nabla u^{\nl}|\ dx$$
where we suppress the explicit dependence on $t,x$ for brevity.  Similarly one has
\begin{equation}\label{eb}
 E(t) \sim \sum_B r_B \int_{10B} |\omega^{\nl}|^2\ dx
\end{equation}
and
\begin{equation}\label{yatta}
 Y_1(t) \sim \sum_B r_B \int_{10B} |\nabla \omega^{\nl}|^2\ dx
\end{equation}
To control $Y_6(t)$, we need to control $\nabla u^{\nl}$.  The Biot-Savart law suggests that this function has comparable size to $\omega^{\nl}$, but we need to localise this intuition to the ball $B$ and thus must address the slightly non-local nature of the Biot-Savart law.  Fortunately this can be handled using standard cutoff functions.  Namely, we have $\Delta u^{\nl} = - \nabla \times \omega^{\nl}$, hence if we let $\psi_B$ be a smooth cutoff adapted to $3B$ that equals $1$ on $2B$, then
$$ u^{\nl} = - \Delta^{-1} (\nabla \times (\omega^{\nl}\psi_B)) + v$$
where $v$ is harmonic on $2B$.  From Sobolev embedding and H\"older one has
$$ \| v\|_{L^2_x(2B)} \lesssim \| \omega^{\nl} \psi_B \|_{L^{6/5}_x(\R^3)} + \| u^{\nl} \|_{L^2_x(2B)}  
\lesssim r_B^{3/2} \| \omega^{\nl} \|_{L^3_x(3B)} + \| u^{\nl} \|_{L^2_x(2B)} $$
and hence by elliptic regularity for harmonic functions
$$ \| \nabla v \|_{L^\infty_x(B)} \lesssim r_B^{-5/2} \| v\|_{L^2_x(2B)} 
\lesssim r_B^{-1} \| \omega^{\nl} \|_{L^3_x(3B)} + r_B^{-5/2} \| u^{\nl} \|_{L^2_x(2B)}.$$
We conclude the pointwise estimate
\begin{equation}\label{nab}
 \nabla u^{\nl} = - \nabla \Delta^{-1} (\nabla \times (\omega^{\nl}\psi_B)) +
O( r_B^{-1} \| \omega^{\nl} \|_{L^3_x(3B)} ) + O( r_B^{-5/2} \| u^{\nl} \|_{L^2_x(2B)} )
\end{equation}
on $B$.  By elliptic regularity, $\nabla \Delta^{-1} (\nabla \times (\omega^{\nl}\psi_B)) $ has an $L^3_x(B)$ norm of 
$O( \| \omega^{\nl} \|_{L^3_x(3B)} )$.  From H\"older's inequality we thus have
$$ \int_B |\omega^{\nl}|^2 |\nabla u^{\nl}|\ dx \lesssim \| \omega^{\nl} \|_{L^3_x(3B)}^3 + r_B^{-5/2} \| \omega^{\nl} \|_{L^2_x(3B)}^2 
\| u^{\nl} \|_{L^2_x(3B)} $$
and hence $Y_6(t) \lesssim Y_{6,1}(t) + Y_{6,2}(t)$, where
$$ Y_{6,1}(t) \coloneqq \sum_B r_B \| \omega^{\nl} \|_{L^3_x(3B)}^3 $$
and
$$ Y_{6,2}(t) \coloneqq \sum_B r_B^{-3/2} \| \omega^{\nl} \|_{L^2_x(3B)}^2 \| u^{\nl} \|_{L^2_x(3B)}.$$
For $Y_{6,2}(t)$, we first consider the contribution of the large balls in which $r_B \geq A^{10}$.  Here we simply use \eqref{vl2} to bound $\| u^{\nl} \|_{L^2_x(3B)} \lesssim A^2$.  Since $r_B^{-3/2} A^2 \lesssim r_B$ for large balls $B$, the contribution of this case is $O( E(t))$ thanks to \eqref{eb}.  Now we look at the small balls in which $r_B < A^{10}$.  Here we use H\"older to bound
$$ \| u^{\nl} \|_{L^2_x(3B)} \lesssim r_B^{3/2} \| u^{\nl} \|_{L^\infty_x(\R^3)} \lesssim r_B^{3/2} ( A^2 + \| u \|_{L^\infty_x(\R^3)} )$$
so the contribution of this case is bounded by
$$ \sum_{B: r_B < A^{10}} \| \omega^{\nl} \|_{L^2_x(3B)}^2 ( A^2 + \| u \|_{L^\infty_x(\R^3)} ).$$
For small balls $B$, $3B$ is completely contained inside the region in which $\partial_t \eta \gtrsim C_0( A_6 + \| u \|_{L^\infty_x(\R^3)})$, so the contribution of this case can be bounded by $O( C_0^{-1} Y_2(t) )$.  Thus
$$ Y_{6,2}(t) \lesssim E(t) + C_0^{-1} Y_2(t).$$
Now we control $Y_{6,1}(t)$.  For each ball $B$, define the mean vorticity $\omega_B$ by
$$ \omega_B \coloneqq \frac{\int_{\R^3} \omega^{\nl} \psi_B\ dx}{\int_{\R^3} \psi_B\ dx}.$$
From the Poincar\'e inequality, Sobolev embedding, and the triangle inequality we have
$$ \| \omega^{\nl} - \omega_B \|_{L^6_x(3B)} \lesssim \| \nabla \omega^{\nl} \|_{L^2_x(10B)}$$
and similarly
\begin{equation}\label{po}
 |\omega_B - \omega_{B'}| \lesssim r_B^{-1/2} \| \nabla \omega^{\nl} \|_{L^2_x(10B)}
\end{equation}
when $B,B'$ are overlapping Whitney balls.  We can now use H\"older's inequality to bound
\begin{align*}
 Y_{6,1}(t) &\lesssim \sum_B r_B^4 \omega_B^3 + \sum_B r_B \| \omega^{\nl} - \omega_B \|_{L^3_x(3B)}^3  \\
&\lesssim \sum_B r_B^4 \omega_B^3 + \sum_B r_B \| \omega^{\nl} - \omega_B \|_{L^2_x(3B)}^{3/2} \| \nabla \omega^{\nl} \|_{L^2_x(10B)}^{3/2}.
\end{align*}
By Young's inequality and \eqref{yatta}, we then have
$$ Y_{6,1}(t) \leq \frac{1}{2} Y_1(t) + O( \sum_B r_B^4 \omega_B^3 + \sum_B r_B \| \omega^{\nl} - \omega_B \|_{L^2_x(3B)}^6 ).$$
From the triangle inequality and Cauchy-Schwarz, and \eqref{eb}, one has
$$ \| \omega^{\nl} - \omega_B \|_{L^2_x(3B)} \lesssim \| \omega^{\nl} \|_{L^2_x(3B)} \lesssim r_B^{-1/2} E(t)^{1/2}
$$
and hence from H\"older, \eqref{yatta}
$$\sum_B r_B \| \omega^{\nl} - \omega_B \|_{L^2_x(3B)}^6  \lesssim \sum_B r_B\| \omega^{\nl} - \omega_B \|_{L^6_x(3B)}^2 E(t)^2
\lesssim E(t)^2 Y_1(t).$$
Now we estimate $\sum_B r_B^4 \omega_B^3$.  We can arrange the Whitney decomposition so that all the radii $r_B$ are powers of $1.001$, and that every ball $B$ of radius less than (say) $A_6 / 100$ has a ``parent'' ball $p(B)$ that overlaps $B$ and has radius $1.001 r_B$.  	From the triangle inequality we have
$$ |\omega_B| \leq |\omega_{p^k(B)}| + \sum_{i=0}^{k-1} |\omega_{p^i(B)} - \omega_{p^{i+1}(B)}|$$
for any Whitney ball $B$, where $k = k_B$ is the first natural number for which the iterated parent $p^k(B)$ has radius larger than $A_6^{1/2}$.  By H\"older we then have
$$ |\omega_B|^3 \lesssim |\omega_{p^k(B)}|^3 + \sum_{i=0}^{k-1} (1+i)^{10} |\omega_{p^i(B)} - \omega_{p^{i+1}(B)}|^3.$$
From a volume packing argment we see that for a given $i$, a Whitney ball $B'$ is of the form $p^i(B)$ for at most $O( (1.001)^{2i})$ choices of $B$.  One can then sum the geometric series (exactly as in \cite[\S 10]{tao-local}) and conclude that
$$ \sum_B r_B^4 \omega_B^3 \lesssim \sum_{B: r_B \geq A_6/100} r_B^4 \omega_B^3 + \sum_{B: r_B < A_6^{1/2}} r_B^4 |\omega_B - \omega_{p(B)}|^3.$$
For the small balls in which $r_B < A_6/100$, we observe from \eqref{yatta} and Cauchy-Schwarz that
$$ \omega_B, \omega_{p(B)} \lesssim r_B^{-2} E(t)^{1/2}$$
and thus from \eqref{po}, \eqref{yatta}
$$ \sum_{B: r_B < A_6/100} r_B^4 |\omega_B - \omega_{p(B)}|^3 \lesssim E(t)^{1/2} Y_1(t).$$
For the large balls in which $r_B \geq A_6/100$, we write $\omega^{\nl} = \nabla \times u^{\nl}$ and integrate by parts using Cauchy-Schwarz to find that
$$ \omega_B \lesssim r_B^{-5/2} \| u^{\nl} \|_{L^2_x(3B)} $$
and hence using \eqref{vl2} and the bounded overlap of the Whitney balls
$$ \sum_{B: r_B \geq A_6/100} r_B^4 \omega_B^3 \lesssim \sum_{B: r_B \geq A_6/100} A^2 r_B^{-7/2} \| u^{\nl} \|_{L^2_x(3B)}^2
\lesssim A^4 A_6^{-7/2} \lesssim A_6^{-2}.$$ 
Thus we have
$$ Y_{6,1} \leq \frac{1}{2} Y_1(t) + O(  E(t)^{1/2} Y_1(t) + A_6^{-2} + E(t)^2 Y_1(t) ).$$

Putting all this together, we see that
$$ \partial_t E(t) \leq - \frac{1}{2} Y_1(t) + O\left( E(t) + |Y_3(t)| + |Y_{10}(t)| + A_6^{-2} + E(t)^{1/2} Y_1(t) + E(t)^2 Y_1(t)\right ).$$
A standard continuity argument using \eqref{e-init}, \eqref{yat}, \eqref{yat-2} then gives
\begin{equation}\label{eo}
 E(t) \lesssim A_6^{-2}
\end{equation}
for all $t_1 \leq t \leq 1$, and also
\begin{equation}\label{eo-2}
\int_{t_1}^1 Y_1(t)\ dt \lesssim A_6^{-2}.
\end{equation}
These are subcritical regularity estimates and can now be iterated\footnote{It is likely that one can also proceed at this point using the local regularity theory from \cite{ckn}.} as in the proof of (iii) to obtain higher regularity.  First we move from control of the vorticity back to control of the velocity.  From \eqref{nab} and elliptic regularity one has
$$
\| \nabla u^{\nl} \|_{L^2_x(B)}^2 \lesssim \| \omega^{\nl} \|_{L^2_x(3B)}^2 + r_B^{-2} \| u^{\nl} \|_{L^2_x(2B)}^2 $$
for any ball $B$; summing this on balls of radius $A_6^{10}$ (say) using \eqref{eo}, \eqref{vl2}, we conclude that
\begin{equation}\label{dance}
\int_{A_6^{-7} R \leq |x| \leq A_6^{7} R} |\nabla u^{\nl}(t,x)|^2\ dx \lesssim A_6^{-2}
\end{equation}
for all $t_1 \leq t \leq 1$.  Similarly we have
$$
\| \nabla^2 u^{\nl} \|_{L^2_x(B)}^2 \lesssim \| \nabla \omega^{\nl} \|_{L^2_x(3B)}^2 + r_B^{-2} \| \nabla u^{\nl} \|_{L^2_x(2B)}^2 $$
and using \eqref{dance}, \eqref{eo-2} in place of \eqref{vl2}, \eqref{eo} we conclude that
$$
\int_{t_1}^1 \int_{A_6^{-6} R \leq |x| \leq A_6^{6} R} |\nabla^2 u^{\nl}(t,x)|^2\ dx dt \lesssim A_6^{-2}.$$
Using the Gagliardo-Nirenberg inequality \eqref{unl} as before we see that
$$
\| u^{\nl} \|_{L^4_t L^\infty_x( [t_1,1] \times \{ A_6^{-5} R \leq |x| \leq A_6^{5} R \})} \lesssim A_6^{-2}$$
which when combined with \eqref{linear-prop} gives
$$
\| u \|_{L^4_t L^\infty_x( [t_1,1] \times \{ A_6^{-5} R \leq |x| \leq A_6^{5} R \})} \lesssim A_6^{-2}.$$
By repeating the arguments in (iii) (using \eqref{local} in place of \eqref{global} to handle the long-range components of the heat kernel, which can be controlled with extremely good bounds using \eqref{able}), one can then show iteratively that
$$
\| u \|_{L^8_t L^\infty_x( [t_1,1] \times \{ A_6^{-4} R \leq |x| \leq A_6^{4} R \})} \lesssim A_6^{-2},$$
then
$$
\| u \|_{L^\infty_t L^\infty_x( [t_1,1] \times \{ A_6^{-3} R \leq |x| \leq A_6^3 R \})} \lesssim A_6^{-2},$$
then
$$
\| \nabla u \|_{L^4_t L^\infty_x( [t_1,1] \times \{ A_6^{-2} R \leq |x| \leq A_6^2 R \})} \lesssim A_6^{-2},$$
then finally
$$
\| \nabla u \|_{L^\infty_t L^\infty_x( [t_1,1] \times \{ A_6^{-1} R \leq |x| \leq 2 A_6 R \})} \lesssim A_6^{-2}$$
and
$$
\| \nabla \omega \|_{L^\infty_t L^\infty_x( [t_1,1] \times \{ R \leq |x| \leq A_6 R \})} \lesssim A_6^{-2}$$
giving (vi).

\section{Carleman inequalities for backwards heat equations}\label{carleman-sec}

We will need some Carleman inequalities for backwards heat equations which are essentially contained in previous literature (most notably \cite{ess}, \cite{ess-back}), but made slightly more quantitative for our application (also it will be convenient to not demand that the functions involved vanish at the starting and final time).  Following \cite{ess}, we shall reverse the direction of time and work here with backwards heat equations rather than forward ones.

Our main tool is the following general inequality (cf. \cite[Lemma 2]{ess-back}):

\begin{lemma}[General Carleman inequality]\label{carl}  Let $[t_1,t_2]$ be a time interval, and let $u \in C^\infty_c( [t_1,t_2] \times \R^d \to \R^m)$ be a (vector-valued) test function solving the backwards heat equation
$$ L u = f$$
with $L$ the backwards heat operator
\begin{equation}\label{L-def}
L \coloneqq \partial_t + \Delta,
\end{equation}
and let $g: [t_1,t_2] \times \R^d \to \R$ be smooth.  Let $F: [t_1,t_2] \times \R^d \to \R$ denote the function
$$ F \coloneqq \partial_t g - \Delta g - |\nabla g|^2.$$
Then we have the inequality
$$ \partial_t \int_{\R^d} \left(|\nabla u|^2 + \frac{1}{2} F |u|^2\right)\ e^g dx
\geq \int_{\R^d} \left( \frac{1}{2} (LF) |u|^2 + 2D^2 g(\nabla u, \nabla u) - \frac{1}{2} |Lu|^2 \right)\ e^g dx$$
for all $t \in I$, where $D^2 g$ is the bilinear form expressed in coordinates as
$$ D^2 g( v, w ) \coloneqq (\partial_i \partial_j g) v_i \cdot w_j$$
with the usual summation conventions.  In particular, from the fundamental theorem of calculus one has
\begin{align*}
& \int_{t_1}^{t_2} \int_{\R^d} \left( \frac{1}{2} (LF) |u|^2 + 2D^2 g(\nabla u, \nabla u) \right)\ e^g dx dt \\
&\quad \leq \frac{1}{2} \int_{t_1}^{t_2} \int_{\R^d} |Lu|^2\ e^g dx dt + \int_{\R^d} \left(|\nabla u|^2 + \frac{1}{2} F |u|^2\right)\ e^g dx|^{t=t_2}_{t=t_1}.
\end{align*}
\end{lemma}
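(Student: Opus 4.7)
The plan is a direct energy-type computation. Writing
$$ Q(t) := \int_{\R^d} \Bigl(|\nabla u|^2 + \tfrac{1}{2} F |u|^2\Bigr) e^g\, dx, $$
the integral form of the inequality follows from the pointwise-in-$t$ differential inequality by the fundamental theorem of calculus, so everything reduces to establishing the pointwise bound on $\partial_t Q$. To that end I would differentiate $Q$ in $t$, substitute $\partial_t u = Lu - \Delta u$ (from the definition \eqref{L-def} of $L$), and integrate by parts spatially on $\R^d$, justified by the compact support of $u$, in order to move all higher spatial derivatives onto the weight $e^g$. The terms containing the source $Lu$ are then bounded via Young's inequality $2ab \le \tfrac12 a^2 + 2 b^2$, which is exactly what produces the $-\tfrac12 |Lu|^2$ factor on the right-hand side of the stated inequality.

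The algebraic heart of the argument is the identification of $2 D^2 g(\nabla u,\nabla u)$, which arises from a commutator between the Laplacian and the exponential weight. Specifically, differentiating $|\nabla u|^2 e^g$ in $t$ and substituting $\partial_t u = Lu - \Delta u$ leads to $-2\int \nabla u \cdot \nabla \Delta u\, e^g dx$; two successive integrations by parts on this integral peel off, among other pieces, a term of the form $-2\int(\partial_i u_k)(\partial_j u_k)(\partial_i\partial_j g)\, e^g dx$, which when moved to the other side of the equation furnishes $+2\int D^2 g(\nabla u,\nabla u)\, e^g dx$ as required. Simultaneously, the time derivative of $\tfrac12 F|u|^2 e^g$ and the $g_t$ factor acting on $e^g$ contribute various scalar functions of $g$, namely $\partial_t g,\Delta g,|\nabla g|^2$, multiplied against $|u|^2$ or $|\nabla u|^2$. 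The crucial observation is that the specific choice $F = \partial_t g - \Delta g - |\nabla g|^2$ is engineered precisely so that all the $|\nabla u|^2$ coefficients cancel out, while the coefficients of $|u|^2$ combine into $\tfrac12(\partial_t F + \Delta F) = \tfrac12 LF$, matching the statement exactly.

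The main obstacle is pure book-keeping: roughly a dozen intermediate terms of the schematic form ($|u|^2$ or $|\nabla u|^2$) $\times$ (derivative of $g$) must be tracked and regrouped, with the magic cancellation being forced by the definition of $F$. A conceptual shortcut I would adopt to minimise errors is to conjugate: set $v := e^{g/2} u$ and compute the conjugated operator $L_g := e^{g/2} L e^{-g/2}$, then decompose $L_g = S + A$ into its (formally) self-adjoint and skew parts. In these variables $Q(t)$ is, up to sign, $\langle v, Sv\rangle_{L^2(\R^d)}$, and $\partial_t Q$ reduces to evaluating the commutator $[S,A]$ plus source contributions involving $Lu$; the principal symbol of $[S,A]$ is exactly $2 D^2 g(\xi,\xi)$ acting on $\nabla v$ (equivalently $\nabla u$), and its zeroth-order part is $\tfrac12 LF$. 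Either route — direct bookkeeping or symbolic commutator calculus — yields the claimed inequality once combined with Young's inequality to absorb the $|Lu|^2$ terms; integration in $t$ from $t_1$ to $t_2$ then gives the displayed integral form and the boundary values $[Q]_{t_1}^{t_2}$.
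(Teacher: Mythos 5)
Your proposal is correct, and the commutator route you describe as a ``shortcut'' is exactly the paper's proof: the paper extracts the operator $S u = \Delta u + \nabla g\cdot\nabla u - \tfrac12 Fu$, which is formally self-adjoint for the weighted inner product $\langle u,v\rangle=\int uv\,e^g\,dx$ (equivalent to your conjugation by $e^{g/2}$), identifies $Q(t)=-\langle Su,u\rangle$, reads off the $2D^2g(\nabla u,\nabla u)$ and $\tfrac12 LF$ contributions from the quadratic form of the commutator $[L,S]$, and produces the $\tfrac12|Lu|^2$ term by completing the square --- i.e.\ your Young inequality $2ab\le\tfrac12 a^2+2b^2$. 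Both your direct bookkeeping route and the commutator route are sound; I see no gaps.
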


The above inequality is valid in all dimensions, but in this paper we will only need this lemma in the case $d=m=3$.

\begin{proof}  By breaking $u$ into components, we may assume without loss of generality that we are in the scalar case $m=1$.

We use the usual commutator method.  Introducing the weighted (and time-dependent) inner product
$$ \langle u, v \rangle \coloneqq \int_{\R^n} u v\ e^g dx$$
for test functions $u,v: I \times \R^n \to \R$, we compute after differentiating under the integral sign and integrating by parts
\begin{align*}
\langle Lu, v \rangle + \langle u, Lv \rangle &= \int_{\R^n} \left( \partial_t(uv) + \Delta(u v) - 2 \nabla u \cdot \nabla v \right)\ e^g dx \\
&= \partial_t \langle u,v \rangle + \int_{\R^n} \left( - (\partial_t g) uv + (\Delta g + |\nabla g|^2) uv - 2 \nabla u \cdot \nabla v \right)\ e^g dx \\
&= \partial_t \langle u,v \rangle - \langle F u, v \rangle - 2 \langle \partial_i u, \partial_i v \rangle 
\end{align*}
with the usual summation conventions.  We can write
\begin{equation}\label{amy}
 - \langle \partial_i u, \partial_i v \rangle - \frac{1}{2} \langle F u, v \rangle = \langle Su , v \rangle
\end{equation}
where $S$ is the differential operator
$$ S u \coloneqq \Delta u + \nabla g \cdot \nabla u - \frac{1}{2} F u$$
which is then formally self-adjoint with respect to the inner product $\langle,\rangle$; one can view $S$ as the self-adjoint component of $L$.  We can then rewrite the above identity as
$$ \partial_t \langle u, v \rangle = \langle Lu, v \rangle + \langle u, Lv \rangle - 2 \langle Su, v \rangle.$$
In particular, by the self-adjointness of $S$ we have for any test functions $u,v$ that
\begin{align*}
\partial_t \langle Su, v \rangle &= \langle LSu, v \rangle + \langle Su, Lv \rangle - 2 \langle Su, Sv \rangle \\
&= \langle [L,S]u, v \rangle + \langle SLu, v \rangle + \langle Su, Lv \rangle - 2 \langle Su, Sv \rangle \\
&= \langle [L,S]u, v \rangle + \langle Lu, Sv \rangle + \langle Su, Lv \rangle - 2 \langle Su, Sv \rangle \\
&= \langle [L,S]u, v \rangle + \frac{1}{2} \langle Lu, Lv \rangle - \frac{1}{2} \langle (L-2S)u, (L-2S)v\rangle.
\end{align*}
Among other things, this shows that the differential operator $[L,S]$ (which does not involve any time derivatives) is formally self-adjoint with respect to the inner product $\langle,\rangle$.  Specialising to the case $u=v$, we conclude in particular the inequality
\begin{equation}\label{sa}
\partial_t \langle Su, u \rangle \leq \langle [L,S]u, u \rangle + \frac{1}{2} \langle Lu, Lu \rangle.
\end{equation}
Now we compute $[L,S]$.  As previously noted, $[L,S]$ is a formally self-adjoint differential operator that does not involve any time derivatives.  Since the second order operator $L$ commutes with the second order component $\Delta$ of $S$, we see that $[L,S]$ is a second-order operator.  The highest order terms can be easily computed in coordinates as
$$ [L,S] u = 2 (\partial_i \partial_j g) \partial_i \partial_j u + \mathrm{l.o.t.}$$
and hence after integrating by parts the symmetric quadratic form $\langle [L,S] u, v \rangle$ must take the form
$$ \langle [L,S] u, v \rangle = \int_{\R^d} ( -2 D^2 g( \nabla u, \nabla v ) + H u v )\ e^g dx$$
for some function $H$; setting $u=1$, we see that $H$ must equal 
$$ H = [L,S] 1 = LS1 = - \frac{1}{2} LF.$$
We conclude that
$$ \langle [L,S] u, u \rangle = \int_{\R^d} ( - 2D^2 g( \nabla u, \nabla u ) - \frac{1}{2} (LF) |u|^2 )\ e^g dx.$$
Inserting this identity back into \eqref{sa} and using \eqref{amy}, we obtain the claim.
\end{proof}

The inequality below is a quantitative variant of \cite[Lemma 4]{ess-back}.

\begin{proposition}[First Carleman inequality]\label{carl-first}  Let $T>0$, $0 < r_- < r_+$, and let ${\mathcal A}$ denote the cylindrical annulus
$$ {\mathcal A} \coloneqq \{ (t,x) \in \R \times \R^3: t \in [0,T]; r_- \leq |x| \leq r_+ \}.$$
Let $u: {\mathcal A} \to \R^3$ be a smooth function obeying the differential inequality
\begin{equation}\label{lu}
 |Lu| \leq C_0^{-1} T^{-1} |u| + C_0^{-1/2} T^{-1/2} |\nabla u|
\end{equation}
on ${\mathcal A}$.  Assume the inequality
\begin{align}
r_-^2 &\geq 4 C_0 T. \label{sigma-2}
\end{align}
Then one has
$$
\int_{0}^{T/4} \int_{10r_- \leq |x| \leq r_+/2} \left( T^{-1} |u|^2 + |\nabla u|^2\right)\ dx dt \lesssim C_0^2 e^{-\frac{r_- r_+}{4C_0 T}} (X + e^{2 r_+^2 / C_0 T} Y)$$
where
$$
X \coloneqq \int\int_{\mathcal A} e^{2 |x|^2 / C_0 T} (T^{-1} |u(t,x)|^2 + |\nabla u(t,x)|^2)\ dx dt
$$
and
$$ Y \coloneqq \int_{r_- \leq |x| \leq r_+} |u(0,x)|^2\ dx.$$
\end{proposition}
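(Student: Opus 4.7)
The plan is to derive the inequality as a direct application of Lemma \ref{carl} to a spacetime cutoff of $u$, with a carefully chosen Carleman weight $g(t,x)$. Since Lemma \ref{carl} requires a compactly supported test function, I first introduce a smooth radial spatial cutoff $\chi(x)$ supported in $\{r_- \leq |x| \leq r_+\}$ and equal to $1$ on $\{5 r_- \leq |x| \leq r_+/2\}$, with $|\nabla \chi| \lesssim r_-^{-1}$ on the inner transition region and $|\nabla \chi| \lesssim r_+^{-1}$ on the outer one; together with a smooth temporal cutoff $\psi(t)$ supported in $[0, T/2]$ with $\psi \equiv 1$ on $[0, T/4]$, $\psi(T/2) = 0$, and $|\partial_t \psi| \lesssim T^{-1}$. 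Setting $v \coloneqq \chi\psi u$, the product rule gives
\[ Lv = \chi\psi\, Lu + \chi(\partial_t \psi) u + 2\psi\, \nabla\chi \cdot \nabla u + \psi(\Delta \chi) u, \]
so $Lv - \chi\psi Lu$ consists of \emph{commutator} terms supported on the boundary set $B \coloneqq \bigl(\{r_- \leq |x| \leq 5 r_-\} \cup \{r_+/2 \leq |x| \leq r_+\}\bigr) \cup ([T/4, T/2]\times\R^3)$.

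For the weight, I would take $g(t,x) \coloneqq \frac{|x|^2}{4(t+\tau)}$ for a parameter $\tau$ of order $C_0 T$, specifically $\tau \approx C_0 T/8$, so that $g(t,x) \leq 2|x|^2/(C_0 T)$ throughout $\mathcal{A}$ (matching the weight in $X$). Direct computation gives $D^2 g = \frac{1}{2(t+\tau)} I$ (positive definite), $F = \partial_t g - \Delta g - |\nabla g|^2 = -\frac{|x|^2}{2(t+\tau)^2} - \frac{3}{2(t+\tau)}$, and $LF = \frac{|x|^2}{(t+\tau)^3} - \frac{3}{2(t+\tau)^2}$. The hypothesis $r_-^2 \geq 4 C_0 T$ ensures $LF \gtrsim |x|^2/(t+\tau)^3 \gtrsim T^{-2}$ on $\mathcal{A}$, while $D^2 g \gtrsim T^{-1} I$ and $F < 0$ on $\mathcal{A}$.

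Applying Lemma \ref{carl} to $v$ on $[0, T/2]$ and using $v(T/2) = 0$ to kill the upper temporal boundary term, then dropping the favorable-signed contribution $-\int |\nabla v|^2 e^g|_{t=0} \leq 0$, I obtain a Carleman inequality of the form
\[ \int_0^{T/2}\!\!\int_{\R^3} \!\left[\tfrac{1}{2}(LF)|v|^2 + 2 D^2 g(\nabla v, \nabla v)\right] e^g\,dx\,dt \leq \tfrac{1}{2}\!\int_0^{T/2}\!\!\int_{\R^3}\! |Lv|^2 e^g\,dx\,dt + \tfrac{1}{2}\!\int_{\R^3}\! |F(0,x)|\,|v(0,x)|^2 e^{g(0,x)}\,dx. \]
The hypothesis $|Lu|^2 \leq 2 C_0^{-2} T^{-2}|u|^2 + 2 C_0^{-1} T^{-1}|\nabla u|^2$ lets me absorb $|\chi\psi Lu|^2 e^g$ into the LHS (since $LF$ and $D^2 g$ are lower-bounded by $T^{-2}$ and $T^{-1}$ with ample room, for $C_0$ large). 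The commutator contributions are supported on $B$ and controlled by $X$ via $e^g \leq e^{2|x|^2/(C_0 T)}$ combined with the size bounds on $\nabla\chi$, $\Delta\chi$, $\partial_t\psi$; the $t=0$ boundary term is controlled by $e^{2r_+^2/(C_0 T)} Y$ using $g(0,x) \leq 2|x|^2/(C_0 T)$ and $|F(0,\cdot)| \lesssim (C_0 T)^{-1}$. Finally, restricting the LHS to the target $[0, T/4] \times \{10 r_- \leq |x| \leq r_+/2\}$ and extracting the minimum value of $e^g$ there yields the claimed inequality.

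The main obstacle is the careful bookkeeping required to produce precisely the exponential factor $e^{-r_- r_+/(4 C_0 T)}$ rather than some weaker decay, especially for large ratios $r_+/r_-$, where the simple Gaussian weight above may need to be refined by a correction term that exploits the annular geometry more finely (for instance, a linear-in-$|x|$ piece chosen so that the combined Hessian remains positive definite on $\mathcal{A}$). The most delicate step is absorbing the outer-boundary commutator contributions --- those supported near $|x| = r_+$, where $e^g$ is largest --- into the $X$ integral after the weight ratios cancel.
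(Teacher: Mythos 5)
Your framework is the right one---apply Lemma \ref{carl} to a cutoff of $u$, absorb $|Lu|^2$ into the left-hand side using \eqref{lu}, send the cutoff commutator terms into $X$ and the $t=0$ boundary term into $Y$---but the weight $g = \frac{|x|^2}{4(t+\tau)}$ with $\tau \sim C_0 T$ cannot produce the stated gain, and this is the heart of the proposition rather than ``bookkeeping''. With a purely Gaussian (radially increasing) weight, the factor you ultimately extract is the ratio between the minimum of $e^g$ on the target region $\{10 r_- \leq |x| \leq r_+/2\}$ and the maximum of $e^g$ on the inner commutator region $\{r_- \leq |x| \leq 5 r_-\}$ (the outer commutator region gives no gain at all, since $e^g$ is largest there and must simply be dominated by the weight in $X$). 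For your $g$ that ratio is $\exp(O(r_-^2/(C_0 T)))$, so the best conclusion available is
$$\int_{0}^{T/4}\int_{10r_-\leq|x|\leq r_+/2}(\cdots)\ dx\,dt \lesssim e^{-O(r_-^2/(C_0T))}\bigl(X + e^{2r_+^2/C_0T}Y\bigr),$$
which is exactly the ``trivial bound'' the paper warns about immediately after the statement: the proposition is vacuous unless $r_+ \geq 20 r_-$, and its entire value in the proof of Theorem \ref{main-est} lies in the regime $r_+/r_- \sim A_6$, where $e^{-r_-r_+/(4C_0T)}$ is incomparably smaller than $e^{-O(r_-^2/(C_0T))}$. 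You flag this in your closing paragraph as an obstacle that ``may need'' a linear correction, but that correction is the essential content of the proof, not a refinement.

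The missing ingredient is the paper's weight
$$g = \alpha(T_0-t)|x| + \frac{|x|^2}{C_0 T}, \qquad \alpha \coloneqq \frac{r_+}{2C_0T^2},$$
where $T_0 \in [T/2,T]$ is chosen by pigeonholing so that the $t=T_0$ boundary term is $O(T^{-1}X)$ (this replaces your temporal cutoff; either device works). The linear piece is convex, so $D^2g \geq \frac{2}{C_0T}$ still holds, and one checks $LF \gtrsim C_0^{-2}T^{-2}$ using \eqref{sigma-2}. The gain now comes entirely from the linear term: on the target region ($t\leq T/4$, $|x|\geq 10r_-$, so $T_0-t\geq T/4$) one has $\alpha(T_0-t)|x|\geq \frac52\alpha Tr_-$, while on the inner transition region ($|x|\leq 2r_-$) one has $\alpha(T_0-t)|x|\leq 2\alpha Tr_-$; the difference $\frac12\alpha Tr_- = \frac{r_-r_+}{4C_0T}$ is precisely the claimed exponent. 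Crucially, $\alpha$ is calibrated so that on the outer region $|x|\geq r_+/2$ the linear term is dominated by the quadratic one ($\alpha T|x|\leq |x|^2/(C_0T)$), keeping $e^g\leq e^{2|x|^2/(C_0T)}$ there so the outer commutator still lands inside $X$. A weight depending on $|x|$ alone through $|x|^2$ can never separate the scales $r_-$ and $r_+$ in this way.
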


The key feature here is the gain of $e^{-\frac{r_- r_+}{4C_0T}}$, which can be compared against the trivial bound of $e^{-2 r_-^2 / C_0T} X$ that follows by lower bounding the factor $e^{2|x|^2/C_0T}$ appearing in $X$ by $e^{2r_-^2/C_0T}$.  Thus, this lemma becomes powerful when the ratio $r_+/r_-$ is large.  Informally, Proposition \ref{carl-first} asserts that if $u$ solves \eqref{lu} on $\mathcal{A}$, has some mild Gaussian decay as $|x| \to \infty$, and is extremely small at $t=0$, then it is also very small in the interior of $\mathcal{A}$ near $t=0$.  The various numerical constants such as $1/4$ or $10$ appearing in the above proposition can be modified (and optimised) if desired, but we fix a specific choice of constants for sake of concreteness.  The weight $e^{2|x|^2/C_0T}$ in $X$ is inconvenient, but it is negligible when compared against the ``natural'' decay rate of $e^{-|x|^2/4t}$ arising from the fundamental solution of the heat equation, and it can be managed in our application by using the second Carleman inequality given below.  Specialising Proposition \ref{carl-first} the case $u(0,x)=0$ (so that $Y=0$) and sending $r_+$ to infinity, one recovers a variant of the backwards uniqueness result in \cite[Lemma 4]{ess-back}.

\begin{proof}  We may assume that
\begin{equation}\label{sigma-3}
r_+ \geq 20 r_-
\end{equation}
since the claim is vacuous otherwise.
By the pigeonhole principle, one can find a time $T_0 \in [T/2,T]$ such that
\begin{equation}\label{quo}
 \int_{r_- \leq |x| \leq r_+} e^{2 |x|^2/C_0 T} (T^{-1} |u(T_0,x)|^2 + |\nabla u(T_0,x)|^2)\ dx \lesssim T^{-1} X.
\end{equation}
Fix this time $T_0$.  In the discussion below we implicitly restrict $(t,x)$ to the region $\mathcal{A} \cap ([0,T_0] \times \R^3)$.  We set 
\begin{equation}\label{alpha-def}
\alpha \coloneqq \frac{r_+}{2C_0 T^2}
\end{equation}
and observe from \eqref{sigma-2}, \eqref{sigma-3} that
\begin{equation}\label{alphar}
\alpha \geq \frac{40}{r_- T}.
\end{equation}

Following \cite{ess}, we apply Lemma \ref{carl} on the interval $[0,T_0]$ with the weight
$$ g \coloneqq \alpha (T_0-t) |x| + \frac{1}{C_0 T} |x|^2$$
and $u$ replaced by $\psi u$, where $\psi(x)$ is a smooth cutoff supported on the region $r_1 \leq |x| \leq r_2$ that equals $1$ on $2r_1 \leq |x| \leq r_2/2$ and obeys the estimates $|\nabla^j \psi(x)| = O( 1/|x|^j)$ for $j=0,1,2$.  Since $\alpha (T_0-t)|x|$ is convex in $x$, we have
$$ D^2 g(\nabla(\psi u), \nabla(\psi u)) \geq 2 \frac{1}{C_0 T} |\nabla(\psi u)|^2.$$
The function $F$ defined in Lemma \ref{carl} can be computed on ${\mathcal A}$ as
\begin{align*}
 F &= - \alpha |x| -\frac{2\alpha (T_0-t)}{|x|} - \frac{6}{C_0 T} - \left( \alpha(T_0-t) + \frac{2 |x|}{C_0 T} \right)^2 \\
 &= - \alpha |x| -\frac{2\alpha (T_0-t)}{|x|} - \frac{6}{C_0 T} - \alpha^2(T_0-t)^2 - 4 \alpha \frac{T_0-t}{C_0 T} |x| - \frac{4 |x|^2}{C_0^2 T^2}.
\end{align*}
In particular $F$ is negative.  We also calculate
$$
LF = 2 \alpha^2(T_0-t) + \frac{4 \alpha |x|}{C_0 T} - \frac{8 \alpha (T_0-t)}{C_0 T |x|} - \frac{24}{C_0^2 T^2}.$$
We see from \eqref{sigma-2} that $\frac{T_0-t}{|x|} \leq \frac{1}{4} |x|$, so that
$$ LF \geq \frac{2 \alpha |x|}{C_0 T} - \frac{24}{C_0^2 T^2}.$$
By \eqref{alphar} we thus have $LF \geq \frac{56}{C_0^2 T^2}$.  Applying Lemma \ref{carl} and discarding some terms, we conclude that
\begin{align*}
& \int_{0}^{T_0} \int_{2r_- \leq |x| \leq r_+/2} \left( 28 C_0^{-2} T^{-2} |u|^2 + 4 C_0^{-1} T^{-1} |\nabla u|^2\right)\ e^g dx dt \\
&\quad \leq \frac{1}{2} \int_{0}^{T_0} \int_{\R^3} |L(\psi u)|^2\ e^g dx dt \\
&\quad\quad + \int_{\R^3} |\nabla(\psi u)(T_0,x)|^2\ e^{g(T_0,x)} dx + \int_{\R^3} |(\psi u)(0,x)|^2 |F(0,x)|\ e^{g(0,x)}\ dx.
\end{align*}
In the region  $2r_- \leq |x| \leq r_+/2$ we have from \eqref{lu} that
$$ |L(\psi u)|^2 = |Lu|^2 \leq 2 C_0^{-2} T^{-2} |u|^2 + 2 C_0^{-1} T^{-1} |\nabla u|^2.$$
In the regions $r_- \leq |x| \leq 2r_-$ or $r_+/2 \leq |x| \leq r_+$, we have
$$ |L(\psi u)|^2 \lesssim |Lu|^2 + |x|^{-2} |\nabla u|^2 + |x|^{-4} |u|^2 \lesssim
C_0^{-2} T^{-2} |u|^2 + C_0^{-1} T^{-1} |\nabla u|^2$$
thanks to \eqref{lu}, \eqref{sigma-2}.  For all other $x$, $L(\psi u)$ vanishes.  A similar calculation gives 
$$ |\nabla(\psi u)|^2 \lesssim |\nabla u|^2 + C_0^{-1} T^{-1} |u|^2.$$
We therefore have
\begin{align*}
& \int_{0}^{T_0} \int_{2r_- \leq |x| \leq r_+/2} \left( C_0^{-2} T^{-2} |u|^2 + C_0^{-1} T^{-1} |\nabla u|^2\right)\ e^g dx dt \\
&\quad \lesssim \int_{0}^{T_0} \int_{|x| \in [r_-,2r_-] \cup [r_+/2,r_+]} (C_0^{-2} T^{-2} |u|^2 + C_0^{-1} T^{-1} |\nabla u|^2)\ e^g dx dt \\
&\quad\quad + \int_{r_- \leq |x| \leq r_+} (C_0^{-1} T^{-1} |u|^2 + |\nabla u|^2)(T_0,x)\ e^{g(T_0,x)} dx + \int_{r_- \leq |x| \leq r_+} |u(0,x)|^2 |F(0,x)|\ e^{g(0,x)}\ dx.
\end{align*}
From \eqref{quo} one has
$$ \int_{r_- \leq |x| \leq r_+} (C_0^{-1} T^{-1} |u|^2 + |\nabla u|^2)(T_0,x)\ e^{g(T_0,x)} dx  \lesssim T^{-1} X.$$
When $t \in [0,T_0]$ and $|x| \in [r_+/2,r_+]$, one has
$$ e^g \leq e^{\alpha T |x| - \frac{|x|^2}{C_0 T}} e^{2 |x|^2 / C_0T} \leq e^{2 |x|^2 / C_0 T}$$
by \eqref{alpha-def}.  When instead $t \in [0,T_0]$ and $|x| \in [r_-,2r_-]$, one has
$$ e^g \leq e^{\alpha T |x| - \frac{|x|^2}{C_0 T}} e^{2 |x|^2 / C_0 T} \leq e^{2\alpha T r_-} e^{2 |x|^2 / C_0 T}.$$
We conclude that
\begin{align*}
& \int_{0}^{T_0} \int_{2r_- \leq |x| \leq r_+/2} \left( C_0^{-2} T^{-2} |u|^2 + C_0^{-1} T^{-1} |\nabla u|^2\right)\ e^g dx dt \\
&\quad \lesssim e^{2\alpha T r_-} T^{-1} X + \int_{r_- \leq |x| \leq r_+} |u(0,x)|^2 |F(0,x)|\ e^{g(0,x)}\ dx.
\end{align*}
In the region $t \in [0,T/4]$, $10r_- \leq |x| \leq r_+/2$, one has
$$ e^g \geq e^{\frac{\alpha T |x|}{4} + \frac{|x|^2}{C_0 T}} \geq e^{\frac{5}{2} \alpha T r_-}$$
and hence
\begin{align*}
& \int_{0}^{T/4} \int_{10r_- \leq |x| \leq r_+/2} \left( C_0^{-2} T^{-2} |u|^2 + C_0^{-1} T^{-1} |\nabla u|^2\right)\ dx dt \\
&\quad \lesssim e^{-\alpha T r_-/2} \left( T^{-1} X + \int_{r_- \leq |x| \leq r_+} |u(0,x)|^2 |F(0,x)|\ e^{g(0,x)}\ dx \right).
\end{align*}
From \eqref{alpha-def} we have
$$ e^{-\alpha T r_-/2} = e^{- \frac{r_- r_+}{4C_0 T}}.$$
Finally, for $t=0$ and $r_- \leq |x| \leq r_+$ one has
$$ e^g \leq e^{\alpha T |x| + \frac{|x|^2}{C_0 T}} \leq e^{\frac{3 r_+^2}{2C_0 T}}$$
and
\begin{align*}
 |F| &\lesssim \alpha r_+ + \alpha T / r_- + C_0^{-1} T^{-1} + \alpha^2 T^2 + C_0^{-1} \alpha r_+ + C_0^{-2} T^{-2} r_+^2 \\
&\lesssim \frac{r_+^2}{T^2} + \frac{r_+}{r_- T} + \frac{1}{T} + \frac{r_+^2}{T^2}  + \frac{r_+^2}{T^2}  + \frac{r_+^2}{T^2} \\
&\lesssim \frac{r_+^2}{T} (T^{-1} + \frac{1}{r_+ r_-}) \\
&\lesssim \frac{r_+^2}{T} T^{-1} 
\end{align*}
since $\frac{1}{r_+ r_-} \leq \frac{1}{r_-^2} \lesssim \frac{1}{T}$ by \eqref{sigma-3}, \eqref{sigma-2}.  Bounding $\frac{r_+^2}{T} e^{\frac{3r_+^2}{2T}} \lesssim e^{\frac{2 r_+^2}{T}}$ and multiplying by $T$, we conclude that
\begin{align*}
& \int_{0}^{T/4} \int_{10r_- \leq |x| \leq r_+/2} \left( C_0^{-2} T^{-1} |u|^2 + C_0^{-1} |\nabla u|^2\right)\ dx dt \\
&\quad \lesssim e^{-\frac{r_- r_+}{4C_0 T} } \left( X + e^{2 r_+^2/T} \int_{r_- \leq |x| \leq r_+} |u(0,x)|^2\ dx \right)
\end{align*}
giving the claim.
\end{proof}

Our second application of Lemma \ref{carl} is the following quantitative version of standard parabolic unique continuation results.

\begin{proposition}[Second Carleman inequality]\label{carl-second}  Let $T, r>0$, and let ${\mathcal C}$ denote the cylindrical region
$$ {\mathcal C} \coloneqq \{ (t,x) \in \R \times \R^3: t \in [0,T]; |x| \leq r \}.$$
Let $u: {\mathcal C} \to \R^3$ be a smooth function obeying the differential inequality \eqref{lu} on ${\mathcal C}$.
Assume the inequality
\begin{align}
r^2 &\geq 4000 T. \label{sigma-5}
\end{align}
Then for any 
\begin{equation}\label{t-small}
 0 < t_1 \leq t_0 < \frac{T}{1000}
\end{equation}
one has
$$ \int_{t_0}^{2t_0} \int_{|x| \leq r/2} \left(T^{-1} |u|^2 + |\nabla u|^2\right) e^{-|x|^2/4t} \ dx dt \lesssim e^{-\frac{r^2}{500 t_0}} X + t_0^{3/2} (et_0/t_1)^{O( r^2 / t_0 )} Y
$$
where
$$
X \coloneqq \int_0^T \int_{|x| \leq r} (T^{-1} |u|^2 + |\nabla u|^2)\ dx dt
$$
and
$$ Y \coloneqq \int_{|x| \leq r} |u(0,x)|^2 t_1^{-3/2} e^{-|x|^2/4t_1}\ dx.$$
\end{proposition}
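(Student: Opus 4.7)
The plan is to apply the general Carleman inequality of Lemma~\ref{carl} to $\chi u$, where $\chi\colon\R^3\to[0,1]$ is a smooth radial cutoff supported on $\{|x|\le r\}$ and equal to $1$ on $\{|x|\le r/2\}$, on the time slab $[0,t^\ast]$ for some $t^\ast\in[2t_0,T/500]$ to be chosen. The Carleman weight I would try is
\[ g(t,x) = -\frac{|x|^2}{4(t+t_1)} - \sigma \log\!\left(\frac{t+t_1}{t_1}\right), \]
with $\sigma$ a parameter of size $\sim r^2/t_0$. Two features of this choice are essential: at $t=0$, $e^{g(0,\cdot)}=e^{-|x|^2/4t_1}$ matches the weight in $Y$; and for $t\in[t_0,2t_0]$, $e^g$ is comparable to $(t_1/t_0)^\sigma e^{-|x|^2/4t}$, so that the left-hand side of the proposition equals $(t_0/t_1)^\sigma$ times a natural Carleman-friendly integral.

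A direct computation yields $F=(3/2-\sigma)/(t+t_1)$, $LF=(\sigma-3/2)/(t+t_1)^2>0$, and $D^2 g=-I/(2(t+t_1))$. Inserting these into Lemma~\ref{carl} and rearranging produces an upper bound for $\int_0^{t^\ast}\!\int\tfrac{\sigma}{(t+t_1)^2}|\chi u|^2 e^g\,dx\,dt$ in terms of four items: (i)~the boundary term at $t=0$, which, since $|F(0,\cdot)|\sim \sigma/t_1$, evaluates to $\lesssim \sigma t_1^{1/2}Y$; (ii)~the boundary term at $t^\ast$, absorbed into $X$; (iii)~the cutoff commutator error, supported on $\{r/2\le|x|\le r\}$ where $e^g\lesssim e^{-r^2/(16(t+t_1))}(t_1/(t+t_1))^\sigma\le e^{-r^2/(32t_0)}(t_1/t_0)^\sigma$, giving an $e^{-r^2/(32t_0)}(t_1/t_0)^\sigma X$ contribution; and (iv)~$\tfrac12\int_0^{t^\ast}\!\int|L(\chi u)|^2 e^g\,dx\,dt$, controlled via \eqref{lu}. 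Multiplying through by $(t_0/t_1)^\sigma$ converts the left-hand side into the target $e^{-|x|^2/4t}$-weighted integral, turns the cutoff contribution into $\lesssim e^{-r^2/500 t_0}X$, and amplifies the $Y$ prefactor to $(t_0/t_1)^\sigma\lesssim (et_0/t_1)^{O(r^2/t_0)}$; a small amount of bookkeeping packages the remaining polynomial factors into the stated $t_0^{3/2}(et_0/t_1)^{O(r^2/t_0)}Y$ bound. The $|\nabla u|^2$ part of the conclusion is recovered via a standard Caccioppoli-type argument on a slightly enlarged region.

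The principal obstacle is the unfavourable sign of $D^2 g$, which places $-|\nabla(\chi u)|^2/(t+t_1)$ on the ``good'' side of the Carleman identity. This is absorbed using the smallness of the coefficients in \eqref{lu} (afforded by $C_0$ being large): the $\tfrac12|L(\chi u)|^2 e^g$ term is bounded by $O(C_0^{-1}T^{-1}|\nabla u|^2+C_0^{-2}T^{-2}|u|^2)e^g$, and together with Cauchy--Schwarz the negative gradient contribution is dominated by the positive $(LF)|u|^2$ term once $\sigma\sim r^2/t_0$, with the hypothesis $r^2\ge 4000T$ ensuring the necessary spatial margin. A further subtlety is that the $t=0$ boundary piece of Lemma~\ref{carl} also contains $|\nabla u(0,\cdot)|^2$, which $Y$ does not see directly; this is handled by pigeonholing within an initial time-slab of width $\sim t_1$ to replace the $t=0$ boundary by a favourable nearby time at which the required $|\nabla u|^2$-bound follows from a routine energy estimate combined with the $|u|^2$-control encoded by $Y$.
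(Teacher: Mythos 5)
Your setup is the right one---the weight $-\tfrac{|x|^2}{4(t+t_1)}-\sigma\log(t+t_1)$ with $\sigma\sim r^2/t_0$, the cutoff $\chi$, the matching of $e^{g(0,\cdot)}$ with the weight in $Y$, and the pigeonhole in the final time to control the upper boundary term by $X$ are all essentially what the paper does. But the step you yourself flag as the principal obstacle is not resolved by the mechanism you propose, and this is a genuine gap. The term $2D^2g(\nabla(\chi u),\nabla(\chi u))=-\tfrac{1}{t+t_1}|\nabla(\chi u)|^2$ carries no small constant: it is part of the Carleman identity itself, not of $|L(\chi u)|^2$, so the smallness of the coefficients in \eqref{lu} is irrelevant to it, and Cauchy--Schwarz cannot dominate a gradient square by $\tfrac{\sigma}{(t+t_1)^2}|\chi u|^2$. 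Worse, for this weight the cancellation is exact at leading order: using the identity $\int|\nabla v|^2e^g=-\langle Sv,v\rangle-\tfrac12\int F|v|^2e^g$ with $F=(3/2-\sigma)/(t+t_1)$, one finds $\int\bigl(\tfrac12 (LF)|v|^2-\tfrac{1}{t+t_1}|\nabla v|^2\bigr)e^g=\tfrac{1}{t+t_1}\langle Sv,v\rangle$, i.e.\ the ``good'' $LF$ term and the ``bad'' Hessian term annihilate each other up to the indefinite quantity $\langle Sv,v\rangle$. This degeneracy is expected, since $e^g$ is essentially the backward heat kernel times a power of $t+t_1$, along which the evolution is nearly neutral; with your weight the inequality of Lemma \ref{carl} yields nothing coercive.

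The paper's resolution, which is the one missing ingredient you need, is twofold: it adds the linear-in-time term $\alpha\tfrac{t+t_1}{T_0+t_1}$ to the weight (so that $F=\tfrac{\alpha}{T_0+t_1}-\tfrac{\alpha}{t+t_1}$ and $LF=\alpha/(t+t_1)^2$), and---crucially---it does not use the integrated form of Lemma \ref{carl} but returns to the differential inequality $\partial_tE(t)\ge\dots$ for $E(t)=\int(|\nabla v|^2+\tfrac12F|v|^2)e^g\,dx$ and multiplies by the integrating factor $m(t)=(t+t_1)+\tfrac{(t+t_1)^2}{10T_0}$. The derivative $m'(t)=1+\tfrac{t+t_1}{5T_0}$ acting on the $|\nabla v|^2$ part of $E$ overcompensates the $-m(t)\tfrac{1}{t+t_1}|\nabla v|^2=-(1+\tfrac{t+t_1}{10T_0})|\nabla v|^2$ coming from $D^2g$, leaving the genuinely positive remainder $\tfrac{t+t_1}{10T_0}\int|\nabla v|^2e^g$ on the good side; only after this does the absorption of $\int(t+t_1)|L(\chi u)|^2e^g$ via \eqref{lu} (and \eqref{teo}, \eqref{alpha-a}) go through. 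This also hands you the $|\nabla u|^2$ part of the conclusion directly, making your Caccioppoli step unnecessary. Finally, a small remark on your last paragraph: the $t=0$ boundary term needs no pigeonholing, because it enters as $-\int(|\nabla v(0)|^2+\tfrac12F(0)|v(0)|^2)e^{g(0)}\,dx$; the gradient piece has the favourable sign and may be discarded, while $-\tfrac12F(0)\approx\tfrac{\alpha}{2t_1}$ times the integrating factor $\approx t_1$ gives exactly the $O(\alpha)\cdot Y$ contribution.
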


As with the previous inequality, the numerical constants here such as $1000, 500$ can be optimised if desired, but this explicit choice of constants suffices for our application.  The key feature here is the gain of $e^{-\frac{r^2}{500 t_0}}$.  Specialising to the case where $u$ vanishes to infinite order at $(0,0)$, sending $t_1 \to 0$ (which sends $(et_0/t_1)^{O( \frac{r^2}{t_0} )} Y$ to zero thanks to the infinite order vanishing), and then sending $t_0 \to 0$, we obtain a variant of a standard unique continuation theorem for backwards parabolic equations (see e.g., \cite[Theorem 4.1]{ess}).

\begin{proof}
By the pigeonhole principle, we can select a time
\begin{equation}\label{teo}
 \frac{T}{200} \leq T_0 \leq \frac{T}{100}
\end{equation}
such that
\begin{equation}\label{x-slice}
\int_{|x| \leq r} (T^{-1} |u|^2 + |\nabla u|^2)\ dx \lesssim T^{-1} X.
\end{equation}
We define
\begin{equation}\label{alpha-def2}
\alpha = \frac{r^2}{400 t_0}
\end{equation}
so from \eqref{sigma-5}, \eqref{t-small} we have
\begin{equation}\label{alpha-a}
\alpha \geq 10.
\end{equation}

We apply Lemma \ref{carl} on $[0,T_0] \times \R^3$ with the weight
$$ g \coloneqq -\frac{|x|^2}{4(t+t_1)} - \frac{3}{2} \log(t+t_1) - \alpha \log \frac{t+t_1}{T_0+t_1} + \alpha \frac{t+t_1}{T_0+t_1}$$
(which is a modification of the logarithm of the fundamental solution $\frac{1}{t^{3/2}} e^{-|x|^2/4t}$ of the heat equation) and $u$ replaced by $\psi u$, where $\psi(x)$ is a smooth cutoff supported on the region $|x| \leq r$ that equals $1$ on $|x| \leq r/2$ and obeys the estimates 
\begin{equation}\label{esti}
|\nabla^j \psi(x)| = O( r^{-j} )
\end{equation}
for $r/2 \leq |x| \leq r$ and $j=0,1,2$.  Clearly
$$ D^2 g(\nabla(\psi u), \nabla(\psi u)) = -\frac{1}{2(t+t_1)} |\nabla(\psi u)|^2.$$
We can calculate
\begin{align*}
 F &= \frac{|x|^2}{4(t+t_1)^2} - \frac{3}{2(t+t_1)} - \frac{\alpha}{t+t_1} + \frac{\alpha}{T_0+t_1} + \frac{3}{2(t+t_1)} + \left| \frac{x}{2(t+t_1)} \right|^2 \\
&= \frac{\alpha}{T_0+t_1} - \frac{\alpha}{t+t_1} 
\end{align*}
and hence
$$ LF = \frac{\alpha}{(t+t_1)^2}.$$
From Lemma \ref{carl} we thus have
\begin{align*}
& \partial_t \int_{\R^3} \left(|\nabla(\psi u)|^2 - \frac{\alpha}{2(t+t_1)} |\psi u|^2 + \frac{\alpha}{2(T_0+t_1)} |\psi u|^2\right)\ e^g dx\\
&\quad \geq \int_{\R^3} \left( \frac{\alpha}{2(t+t_1)^2} |\psi u|^2 - \frac{1}{t+t_1} |\nabla(\psi u)|^2 - \frac{1}{2} |Lu|^2 \right)\ e^g dx.
\end{align*}
To exploit this differential inequality we use the method of integrating factors.  If we introduce the energy
$$ E(t) \coloneqq \int_{\R^3} \left(|\nabla(\psi u)|^2 - \frac{\alpha}{2(t+t_1)} |\psi u|^2 + \frac{\alpha}{2(T_0+t_1)} |\psi u|^2\right)\ e^g dx$$
then we conclude from the product rule that
\begin{align*}
& \partial_t \left( \left( (t+t_1) + \frac{(t+t_1)^2}{10T_0} \right) E(t) \right) \\
&\quad \geq \left(1 + \frac{t+t_1}{5(T_0+t_1)}\right) E(t) \\
&\quad\quad + ( t+t_1 + \frac{(t+t_1)^2}{10T_0} ) \int_{\R^3} \left( \frac{\alpha}{2(t+t_1)^2} |\psi u|^2 + \frac{1}{t+t_1} |\nabla(\psi u)|^2 - \frac{1}{2} |L(\psi u)|^2 \right)\ e^g dx \\
&\quad = \int_{\R^3} \left( \frac{t+t_1}{10(T_0+t_1)} |\nabla(\psi u)|^2 + \alpha \frac{5(T_0+t_1)-(t+t_1)}{10(T_0+t_1)^2} |\psi u|^2 - \frac{1}{2} (t+t_1 + \frac{(t+t_1)^2}{10 (T_0+t_1)}) |L(\psi u)|^2 \right)\ e^g dx \\
&\quad \geq \int_{\R^3} \left( \frac{t+t_1}{10(T_0+t_1)} |\nabla(\psi u)|^2 + \frac{\alpha}{10(T_0+t_1)} |\psi u|^2 - (t+t_1) |L(\psi u)|^2 \right)\ e^g dx 
\end{align*}
and hence by the fundamental theorem of calculus
\begin{align*}
&\int_0^{T_0} \int_{\R^3} \left( \frac{t+t_1}{10(T_0+t_1)} |\nabla(\psi u)|^2 + \frac{\alpha}{10(T_0+t_1)} |\psi u|^2  \right)\ e^g dx dt \\
&\quad \leq \int_0^{T_0} \int_{\R^3} (t+t_1) |L(\psi u)|^2\ e^g dx dt + \left( (t+t_1) + \frac{(t+t_1)^2}{10(T_0+t_1)} \right) E(t)|^{t=T_0}_{t=0}.
\end{align*}
Discarding some terms, we conclude that
\begin{equation}\label{manx}
\begin{split}
&\int_0^{T_0} \int_{|x| \leq r/2} \left( \frac{t+t_1}{10(T_0+t_1)} |\nabla u|^2 + \frac{\alpha}{10(T_0+t_1)} |u|^2  \right)\ e^g dx dt \\
&\quad \leq \int_0^{T_0} \int_{\R^3} (t+t_1) |L(\psi u)|^2\ e^g dx dt \\
&\quad\quad + O\left( T_0 \int_{\R^3} |\nabla(\psi u)(T_0,x)|^2\ e^g dx + \alpha \int_{|x| \leq r} |u(0,x)|^2\ e^g dx \right).
\end{split}
\end{equation}
When $|x| \leq r/2$, one has
$$ |L(\psi u)|^2 = |Lu|^2 \leq 2T^{-2} |u|^2 + 2 T^{-1} |\nabla u|^2$$
thanks to \eqref{lu}.  By \eqref{teo}, \eqref{alpha-a} the contribution of this case is less than half of the left-hand side of \eqref{manx}.
When $r/2 \leq |x| \leq r$, we have from \eqref{esti}, \eqref{sigma-5} that
\begin{align*}
 |L(\psi u)|^2 &\lesssim T^{-2} |u|^2 + T^{-1} |\nabla u|^2 + r^{-4} |u|^2 + r^{-2} |\nabla|^2 \\
&\lesssim T^{-2} |u|^2 + T^{-1} |\nabla u|^2.
\end{align*}
Finally, $L(\psi u)$ vanishes for $|x| > r$.
Putting all this together, we conclude that
\begin{align*}
&\int_0^{T_0} \int_{|x| \leq r/2} \left( \frac{t+t_1}{T_0+t_1} |\nabla u|^2 + \frac{\alpha}{T_0+t_1} |u|^2  \right)\ e^g dx dt \\
&\quad \lesssim \int_0^{T_0} \int_{r/2 \leq |x| \leq r} (t+t_1) (T^{-2} |u|^2 + T^{-1} |\nabla u|^2)\ e^g dx dt \\
&\quad\quad + T_0 \int_{\R^3} |\nabla(\psi u)(T_0,x)|^2\ e^g dx + \alpha \int_{|x| \leq r} |u(0,x)|^2\ e^g dx.
\end{align*}
Restricting the left-hand integral to the region $t_0 \leq t \leq 2t_0$ and also bounding $t_1 \leq t_0 \leq T_0 \leq T$ in several places, we conclude that
\begin{align*}
&\int_{t_0}^{2t_0} \int_{|x| \leq r/2} \left( \frac{t_0}{T_0} |\nabla u|^2 + \frac{\alpha}{T_0} |u|^2  \right)\ e^g dx dt \\
&\quad \lesssim \int_0^{T_0} \int_{r/2 \leq |x| \leq r} (T^{-1} |u|^2 + |\nabla u|^2)\ e^g dx dt \\
&\quad\quad + T \int_{\R^3} |\nabla(\psi u)(T_0,x)|^2\ e^g dx + \alpha \int_{|x| \leq r} |u(0,x)|^2\ e^g dx.
\end{align*}
From elementary calculus we have the inequality
$$ -\frac{a}{t} - b \log t \leq b \log \frac{b}{ae}$$
for any $a,b,t > 0$ (the left-hand side attains its maximum when $t = a/b$).  When $r/2 \leq |x| \leq r$ and $0 \leq t \leq T_0$, we then have
\begin{align*}
g &\leq -\frac{|x|^2}{4(t+t_1)} - \left(\alpha+\frac{3}{2}\right) \log(t+t_1) + \alpha \log (T_0+t_1) + \alpha \\
&\leq \left(\alpha+\frac{3}{2}\right) \log \frac{4 \left(\alpha+\frac{3}{2}\right)}{e |x|^2} + \alpha \log (T_0+t_1) + \alpha \\
&\leq \left(\alpha+\frac{3}{2}\right) \log \frac{32\alpha}{e r^2} + \alpha \log (T_0+t_1) + \alpha \\
&\leq \alpha \log \frac{32 \alpha (T_0+t_1)}{r^2} + \frac{3}{2} \log \frac{32\alpha}{e r^2}  
\end{align*}
and thus by \eqref{alpha-def2}
$$ \int_0^{T_0} \int_{r/2 \leq |x| \leq r} (T^{-1} |u|^2 + |\nabla u|^2)\ e^g dx dt  \lesssim
t_0^{-3/2} \exp( \alpha \log \frac{32 \alpha (T_0+t_1)}{r^2} ) X.$$
When $|x| \leq r$ and $t = T_0$, then
$$ g \leq - \frac{3}{2} \log t_0 + \alpha$$
and $\nabla(\psi u)$ is supported on the ball $\{ |x| \leq r\}$ and obeys the estimate
$$ |\nabla(\psi u)| \lesssim |\nabla u| + r^{-1} |u| \lesssim T^{-1} |u| + |\nabla u|$$
thanks to \eqref{sigma-5}, and hence by \eqref{x-slice}
$$ T \int_{\R^3} |\nabla(\psi u)(T_0,x)|^2\ e^g dx \lesssim t_0^{-3/2} \exp(\alpha) X.$$
From \eqref{sigma-5}, \eqref{alpha-a} we have $\log \frac{32 \alpha (T_0+t_0)}{r^2} \geq 1$.  Thus
\begin{align*}
&\int_{t_0}^{2t_0} \int_{|x| \leq r/2} \left( \frac{t_0}{T_0} |\nabla u|^2 + \frac{\alpha}{T_0} |u|^2  \right)\ e^g dx dt \\
&\quad \lesssim t_0^{-3/2} \exp( \alpha \log \frac{32 \alpha (T_0+t_1)}{r^2} ) X
&\quad\quad + \alpha \int_{|x| \leq r} |u(0,x)|^2\ e^g dx.
\end{align*}
In the region $t_0 \leq t \leq 2t_0$, $|x| \leq r/2$, we have
$$
g \geq -\frac{|x|^2}{4t} - \frac{3}{2} \log(3t_0) - \alpha \log \frac{3t_0}{T_0+t_1}
$$
so that
$$ e^g \gtrsim t_0^{-3/2} e^{-|x|^2/4t} \exp\left( - \alpha \log \frac{3t_0}{T_0+t_1} \right).$$
Finally, when $t=0$ and $|x| \leq r$, we have
$$ g \leq -\frac{|x|^2}{4t_1} -\frac{3}{2} \log t_1 - \alpha \log \frac{t_1}{T_0+t_1} + \alpha $$
so that
$$ e^g \leq t_1^{-3/2} e^{-|x|^2/4t_1} \exp\left( \alpha \log \frac{e(T_0+t_1)}{t_1} \right).$$
We conclude that
\begin{align*}
&\int_{t_0}^{2t_0} \int_{|x| \leq r/2} \left( \frac{t_0}{T_0} |\nabla u|^2 + \frac{\alpha}{T_0} |u|^2  \right)\ dx dt \\
&\quad \lesssim \exp\left( \alpha \log \frac{96 \alpha t_0}{r^2} \right) X
+ \alpha \exp\left( \alpha \log(\frac{3e t_0}{t_1}) \right) t_0^{3/2} \int_{|x| \leq r} |u(0,x)|^2 t_1^{-3/2} e^{-|x|^2/4t_1}\ dx.
\end{align*}
From \eqref{alpha-def2} we have $\log \frac{96 \alpha t_0}{r^2}  \leq -1$, while from \eqref{alpha-a}, \eqref{teo}, \eqref{sigma-5}, \eqref{alpha-def2} we have $\frac{\alpha}{T_0} \gtrsim T^{-1}$ and $\frac{t_0}{T_0} \gtrsim \frac{t_0}{r^2} \gtrsim \alpha^{-1}$.  We conclude that
\begin{align*}
&\int_{t_0}^{2t_0} \int_{|x| \leq r/2} \left( |\nabla u|^2 + T^{-1} |u|^2  \right)\ dx dt \\
&\quad \lesssim \alpha^{2} e^{-\alpha} X
+ \exp\left( \alpha \log(\frac{3e t_0}{t_1}) \right) t_0^{3/2} \int_{|x| \leq r} |u(0,x)|^2 t_1^{-3/2} e^{-|x|^2/4t_1}\ dx.
\end{align*}
From \eqref{alpha-def2} we have $\alpha = O(r^2/t_0)$ and $\alpha^2 e^{-\alpha} \lesssim e^{-\frac{r^2}{500 t_0}}$, and the claim follows.
\end{proof}

\section{Main estimate}

In this section we combine the estimates in Proposition \ref{basic} with the Carleman inequalities from the previous section to obtain 

\begin{theorem}[Main estimate]\label{main-est}
Let $t_0, T, u, p, A$ obey the hypotheses of Proposition \ref{basic}, and suppose that there exists $x_0 \in \R^3$ and $N_0 > 0$ such that
$$ |P_{N_0} u(t_0,x_0)| \geq A_1^{-1} N_0$$
where as before we set $A_j \coloneqq A^{C_0^j}$.
Then
$$ T N_0^2 \leq \exp(\exp(\exp(A_6^{O(1)}))).$$
\end{theorem}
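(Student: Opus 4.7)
The plan is a proof by contradiction. Assume that $TN_0^2 > \exp\exp\exp(A_6^K)$ for a sufficiently large absolute constant $K$, to be chosen at the end, and derive a contradiction with the critical bound \eqref{u3} at time $t = t_0$. The argument is exactly the scheme depicted in Figure \ref{fig:scheme}: we build a long chain of concentration points via back-propagation, convert each one into a lower bound on $\int |u(t_0,x)|^3\,dx$ localised to an annulus around $x_0$, and sum the contributions over disjoint annuli.

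Step 1 (family of back-propagated concentrations). Choose a geometric sequence of time-scales $T^{(1)} < T^{(2)} < \cdots < T^{(M)}$ inside $[A_4 N_0^{-2}, A_4^{-1} T]$ with consecutive ratio $\exp(A_6^{K_1})$ for a sufficiently large $K_1$; the assumption on $TN_0^2$ allows $M \gtrsim \log(TN_0^2)/A_6^{K_1}$. For each $n$, Proposition \ref{basic}(v) produces a point $(t_n,x_n) \in [t_0-T^{(n)}, t_0 - A_3^{-1}T^{(n)}]\times \R^3$, a frequency $N_n = A_3^{O(1)} (T^{(n)})^{-1/2}$, and the concentration $|P_{N_n} u(t_n,x_n)| \geq A_1^{-1} N_n$, with $|x_n-x_0| = O(A_4^{O(1)} (T^{(n)})^{1/2})$. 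Standard Littlewood--Paley manipulations (using Bernstein and the fact that the curl commutes with $P_{N_n}$) upgrade this to a lower bound of the form $|P_{N_n}\omega(t_n,x_n)| \gtrsim A_1^{-O(1)} N_n^2$, and hence to a Gaussian-weighted $L^2$ lower bound for $\omega$ in a ball of radius $\sim N_n^{-1}$ around $(t_n,x_n)$.

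Step 2 (regularity scaffolding). For each $n$, use Proposition \ref{basic}(iii) applied to a suitable subinterval of $[t_n,t_0]$ to produce an epoch of regularity $I_n$ of length $|I_n| \gtrsim A^{-8}(t_0-t_n)$ on which $u,\nabla u,\omega,\nabla\omega$ are pointwise bounded by appropriate powers of $A$. Then apply Proposition \ref{basic}(vi) to obtain a radius $R_n$ with $T^{(n),1/2} \leq R_n \leq \exp(A_6^{O(1)}) T^{(n),1/2}$ and an annulus of regularity $\Omega_n := I_n \times \{R_n \leq |x-x_n| \leq A_6 R_n\}$ on which $u,\nabla u, \omega, \nabla\omega$ are controlled by $A_6^{-2}$ times appropriate powers of $T^{(n)}$. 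The annulus widths $A_6 R_n$ and the separations between consecutive $T^{(n)}$ will ensure that the resulting annuli around $x_0$ in Step 5 are disjoint.

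Step 3 (unique continuation, Proposition \ref{carl-second}). Time-reverse, setting $v(\tau,y) := \omega(t_0-\tau, x_n+y)$, so that $v$ satisfies a backwards heat equation with right-hand side bounded by the $L^\infty$ norms of $u$ and $\nabla u$ on the epoch $I_n$. On the cylinder $\{0 \leq \tau \leq t_0 - t_n\} \times \{|y| \leq A_6 R_n/10\}$, the coefficients are small enough (after an appropriate rescaling in time to absorb the $C_0^{-1}$ factors in \eqref{lu}) to apply Proposition \ref{carl-second}. The Gaussian-weighted concentration from Step 1, placed at $\tau$ corresponding to $t_n$, plays the role of the initial datum $Y$. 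The contrapositive direction yields a lower bound
\[
\int_{\tilde I_n} \int_{R_n \leq |y| \leq A_6 R_n/2} |v(\tau,y)|^2 \,dy\,d\tau \gtrsim \exp\bigl(-A_6^{O(1)}\bigr)
\]
for a suitable sub-interval $\tilde I_n$ of $I_n$, provided $R_n$ is tuned so that the parasitic factor $(et_0/t_1)^{O(r^2/t_0)}$ is dominated by the contribution from the Littlewood--Paley concentration. This produces the first exponential loss.

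Step 4 (backwards uniqueness, Proposition \ref{carl-first}). Apply Proposition \ref{carl-first} to the same time-reversed vorticity on a cylindrical annulus with inner radius $\sim R_n$ and outer radius $\sim A_6 R_n$. The coefficient bound \eqref{lu} holds on this annulus thanks to Step 2. The lower bound from Step 3 is placed in the interior of the Carleman cylinder, and the $X$ contribution is controlled by $A^{O(1)}$ using the global bound \eqref{u3}. Choosing the time-reversal so that $\tau = 0$ corresponds to $t = t_0$, the resulting lower bound on $Y$ becomes
\[
\int_{R_n/10 \leq |x-x_n| \leq A_6 R_n/2} |\omega(t_0,x)|^2 \,dx \gtrsim \exp\bigl(-A_6^{O(1)}\bigr),
\]
with the exponential coming from the $e^{-r_- r_+/4C_0 T}$ gain battled against $e^{2r_+^2/C_0 T}$ from $Y$. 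This produces the second exponential loss. Inverting the Biot--Savart law locally (using the annulus-of-regularity bounds from Step 2 to control the long-range tail), and using $|x_n-x_0| \ll R_n$ from Step 1, one converts this into a comparable lower bound $\int_{\mathcal A_n} |u(t_0,x)|^3\,dx \gtrsim \exp(-A_6^{O(1)})$, where $\mathcal A_n$ is an annulus around $x_0$ of inner radius $\sim R_n/20$.

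Step 5 (summing over $n$). The geometric separation of the $T^{(n)}$ built into Step 1, combined with the fact that each $\mathcal A_n$ has thickness $\leq A_6 R_n$, guarantees that the annuli $\mathcal A_1,\ldots,\mathcal A_M$ are pairwise disjoint. Summing the lower bounds from Step 4 gives
\[
A^3 \geq \|u(t_0)\|_{L^3_x(\R^3)}^3 \gtrsim M \exp\bigl(-A_6^{O(1)}\bigr),
\]
so $M \lesssim A^3 \exp(A_6^{O(1)}) \leq \exp(A_6^{O(1)})$. Since $M \gtrsim \log(TN_0^2)/\exp(A_6^{O(1)})$, this forces $\log\log\log(TN_0^2) \leq A_6^{O(1)}$, contradicting our assumption once $K$ is chosen large enough. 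This provides the third exponential and completes the proof.

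The main obstacle is Step 3: the parasitic factor $(et_0/t_1)^{O(r^2/t_0)}$ in Proposition \ref{carl-second} is a genuine exponential in $r^2/t_0$, and will swamp the Littlewood--Paley lower bound unless one sets the Gaussian parameter $t_1$ and the radius $R_n$ in a carefully balanced way. The correct choice is to take $t_1$ comparable to $N_n^{-2} \sim T^{(n)}$ so that the Gaussian $e^{-|x|^2/4t_1}$ peaks at exactly the spatial scale dictated by the frequency $N_n$, and then to absorb the remaining factor into the $\exp(A_6^{O(1)})$ freedom in choosing $R_n$ supplied by Proposition \ref{basic}(vi). The second obstacle, less severe, is the verification that the $L^\infty$ coefficients of the vorticity equation are small enough on $I_n$ and on the annulus to legitimately invoke the Carleman inequalities with the sharp constant $C_0^{-1}$ appearing in \eqref{lu}; this is handled by the quantitative bounds in Proposition \ref{basic}(iii) and (vi) together with an appropriate parabolic rescaling.
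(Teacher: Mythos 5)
Your overall scheme is the paper's: back-propagation, epochs and annuli of regularity, the two Carleman inequalities, and a sum over disjoint annuli around $x_0$ at time $t_0$. But there are two genuine gaps. The more serious one is in Step 4, where you claim the term $X$ in Proposition \ref{carl-first} ``is controlled by $A^{O(1)}$ using the global bound \eqref{u3}.'' It is not: $X$ carries the weight $e^{2|x|^2/C_0T}$, which on the annulus of regularity (outer radius $r_+ \sim A_6 R$, applied with $T\sim T_2/C_0$) is as large as $e^{cA_6^2 R^2/T_2}$, whereas the Carleman gain is only $e^{-r_-r_+/4C_0T}\sim e^{-cA_6R^2/T_2}$; since $R^2/T_2 \geq A_6^2$, the weighted $X$ term can swamp both the gain and the lower bound on $Z$, and no conclusion about $Y$ follows. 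This is exactly the difficulty the paper flags after stating Proposition \ref{carl-first}, and resolving it requires the dichotomy \eqref{option-1}/\eqref{option-2}: when the weighted $X'$ is large, one pigeonholes in the radius $R'$ and in an (exponentially short) time scale, covers the resulting sub-annulus by parabolic balls, and applies Proposition \ref{carl-second} a \emph{second} time to push that concentration to time $t_0$. Your proposal has no analogue of this branch, and without it the backwards-uniqueness step does not close.

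The second gap is in Step 3. Proposition \ref{carl-second}, used contrapositively, takes a lower bound on the interior quantity $Z$ (the space-time integral near the \emph{center} of the cylinder at times $[t_0,2t_0]$) together with control of $X$, and outputs a lower bound on $Y$, which is Gaussian-localized at the center of the cylinder at the initial time. You center the cylinder at $x_n$ and say the concentration ``plays the role of the initial datum $Y$'' --- that inverts the logic (an upper bound on $Z$ in terms of a large $Y$ gives nothing), and even with the roles corrected, a cylinder centered at $x_n$ only shows the vorticity is large near $x_n$ at the later time, not in an annulus at radius $R_n \gg |x_n-x_0|$. To reach the annulus one must, as in the paper, center the cylinder at an arbitrary far point $x_*$ with $|x_*|\gtrsim R_n$ and radius $r\sim A_5|x_*|$ large enough that the cylinder still contains the concentration region (which then feeds $Z$), and vary $x_*$ over the annulus. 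Relatedly, your per-annulus lower bound $\exp(-A_6^{O(1)})$ cannot be right: the Gaussian decay over radii up to $\exp(A_6^{O(1)})T_2^{1/2}$ permitted by Proposition \ref{basic}(vi) forces it to be $\exp(-\exp(A_6^{O(1)}))$, which is in fact what your own triple-logarithm count in Step 5 implicitly requires.
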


\begin{proof}  After translating in time and space we may normalise $(t_0,x_0)=(0,0)$.  Let $T_1$ be an arbitrary time scale in the interval
$$A_4 N_0^{-2} \leq T_1 \leq A_4^{-1} T.$$
By Proposition \ref{basic}(v), there exists
\begin{equation}\label{do}
 (t_1,x_1) \in [-T_1, -A_3^{-O(1)} T_1] \times B(0, A_4^{O(1)} T_1^{1/2})]
\end{equation}
and
\begin{equation}\label{nando}
N_1 = A_3^{O(1)} T_1^{-1/2}
\end{equation}
such that
$$ |P_{N_1} u(t_1,x_1)| \geq A_1^{-1} N_1.$$
From the Biot-Savart law we have
$$ P_{N_1} u(t_1,x_1) = - \Delta^{-1} P_{N_1} \nabla \times \tilde P_{N_1} \omega(t_1,x_1),$$
and hence by \eqref{local}
$$ P_{N_1} u(t_1,x_1) \lesssim N_1^{-1} \| \tilde P_{N_1} \omega(t_1) \|_{L^\infty(B(x_1,A_1/N_1))} + A_1^{-50} N_1^{-1} \| \tilde P_{N_1} \omega(t_1) \|_{L^\infty(\R^3)}.$$
From \eqref{able}, \eqref{bern} one has
$$ \| \tilde P_{N_1} \omega(t_1) \|_{L^\infty(\R^3)} \lesssim A N_1^2$$
and thus we have
$$ |\tilde P_{N_1} \omega(t_1,x'_1)| \gtrsim A_1^{-1} N_1^2$$
for some $x'_1 = x_1 + O( A_1 / N_1 ) = O( A_4^{O(1)} T_1^{1/2} )$.   By Proposition \ref{basic}(i), one has
$$ \nabla \tilde P_{N_1} \omega = O( A N_1^3 ); \quad \partial_t \tilde P_{N_1} \omega = O( A N_1^4 )$$
and thus
\begin{equation}\label{pani}
 |\tilde P_{N_1} \omega(t,x)| \gtrsim A_1^{-1} N_1^2
\end{equation}
for all $(t,x) \in [t_1, t_1 + A_1^{-2} N_1^{-2}] \times B( x'_1, A_1^{-2} N_1^{-1} )$.  By Proposition \ref{basic}(iii), there is an interval 
$$I' \subset [t_1, t_1 + A_1^{-2} N_1^{-2}] \cap [-T_1, -A_3^{-O(1)} T_1]$$ 
with $|I'| = A_3^{-O(1)} T_1$ such that
$$
 u(t,x) = O( A_3^{O(1)} T_1^{-1/2} ), \nabla u(t,x) = O( A_3^{O(1)} T_1^{-1} )
$$
and
\begin{equation}\label{omega-bound}
 \omega(t,x) = O( A_3^{O(1)} T_1^{-1} ), \nabla \omega(t,x) = O( A_3^{O(1)} T_1^{-3/2} )
\end{equation}
on $I' \times \R^3$.  By shrinking $I'$ as necessary, we may thus assume that
\begin{equation}\label{hawt}
 |u(t,x)| \leq C_0^{-1/2} |I'|^{-1/2}; \quad |\nabla u(t,x)| \leq C_0^{-1} |I'|^{-1}.
\end{equation}
On the other hand, from \eqref{pani}, \eqref{do}, \eqref{nando} one has
$$ \int_{B(0, A_4^{O(1)} T_1^{1/2})} |\tilde P_{N_1} \omega(t,x)|^2\ dx \gtrsim A_3^{-O(1)} T_1^{-1/2}$$
for all $t \in I'$.  From \eqref{local} and \eqref{hawt} this implies that
\begin{equation}\label{stip}
 \int_{B(0, A_4^{O(1)} T_1^{1/2})} |\omega(t,x)|^2\ dx \gtrsim A_3^{-O(1)} T_1^{-1/2}
\end{equation}
for all $t \in I'$.

Write $I' \coloneqq [t'-T', t']$, and let $x_* \in \R^3$ be any point with $|x_*| \geq A_5 T_1^{1/2}$.  We apply Proposition \ref{carl-second} on the slab $[0,T'] \times \R^3$ with $r \coloneqq A_5 |x_*|$, $t_0 \coloneqq T'/2$, and $t_1 \coloneqq A^{-4}_5 T'$, and $u$ replaced by the function
$$ (t,x) \mapsto \omega( t' - t, x_* + x )$$
(so that the hypothesis \eqref{lu} follows from the vorticity equation and \eqref{hawt}) to conclude that
$$ Z\lesssim \exp( - A_5 |x_*|^2 / T' ) X + (T')^{3/2} \exp( O( A_5^3 |x_*|^2 / T' ) ) Y $$
where
$$ X \coloneqq \int_{I'} \int_{B(x_*,A_5 |x_*|)} ((T')^{-1} |\omega|^2 + |\nabla \omega|^2)\ dx dt$$
and
$$ Y \coloneqq (T')^{-3/2} \int_{B(x_*,A_5 |x_*|)} |\omega(t',x)|^2 e^{-A_5^{4} |x-x_*|^2 / 4 T'}\ dx$$
and
$$ Z \coloneqq \int_{t'-T'}^{t'-T'/2} \int_{B(x_*, A_5 |x_*|/2)} (T')^{-1} |\omega|^2 e^{-|x-x_*|^2/4(t'-t)}\ dx dt.$$
From \eqref{stip} we have
$$ Z \gtrsim A_3^{-O(1)} \exp( - |x_*|^2 / 100 T' ) (T')^{-1/2}.$$
From \eqref{omega-bound} we have
$$ X \lesssim (T')^{-2} A_5^{3} |x_*|^3 \lesssim A_5^{3} \exp( |x_*|^2 / T' ) (T')^{-1/2}$$
and hence the expression $\exp( - A_5 |x_*|^2 / T' ) X$ is negligible compared to $Z$.  We conclude that
$$ Y \gtrsim \exp( - O( A_5^{3} |x_*|^2 / T' ) ) (T')^{-2}.$$
Using \eqref{omega-bound}, the contribution to $Y$ outside of the ball $B(x_*, |x_*|/2)$ is negligible, thus
$$ \int_{B(x_*,|x_*|/2)} |\omega(t',x)|^2\ dx \gtrsim \exp( - O( A_5^{3} |x_*|^2 / T' ) ) (T')^{-1/2} $$
and therefore
$$ \int_{B(0,2R) \backslash B(0,R/2)} |\omega(t',x)|^2\ dx \gtrsim \exp( - O( A_5^{3} R^2 / T' ) ) (T')^{-1/2} $$
whenever $R \geq A_5 T_1^{1/2}$.  A similar argument holds with $t'$ replaced by any time in $[t'-T'/4, t']$.  We conclude in particular that we have the Gaussian lower bound
\begin{equation}\label{gauss-lower}
\int_{-T_1}^{-A_4^{-1} T_1} \int_{B(0,2R) \backslash B(0,R/2)} |\omega(t,x)|^2\ dx dt \gtrsim \exp( -A_5^{4} R^2 / T_1 ) T_1^{1/2} 
\end{equation}
for any time scale $T_1$ and spatial scale $R$ with $A_4 N_0^{-2} \leq T_1 \leq A_4^{-1} T$ and $R \geq A_5 T_1^{1/2}$.

Now let $T_2$ be a scale for which
\begin{equation}\label{scalo}
 A_4^{2} N_0^{-2} \leq T_2 \leq A_4^{-1} T.
\end{equation}
By Proposition \ref{basic}(vi), there exists a scale
\begin{equation}\label{rb0}
 A_6 T_2^{1/2} \leq R \leq \exp(A_6^{O(1)}) T_2^{1/2} 
\end{equation}
such that on the cylindrical annulus
$$ \Omega := \{ (t,x) \in [-T_2,0] \times \R^3: R \leq |x| \leq A_6 R \}$$
one has the estimates
\begin{equation}\label{annu}
\nabla^j u(t,x) = O(A_6^{-2} T_2^{-\frac{j+1}{2}}); \quad \nabla^j \omega(t,x) = O(A_6^{-2} T_2^{-\frac{j+2}{2}})
\end{equation}
for $j=0,1$.  We apply Proposition \ref{carl-first} on the slab $[0,T_2/C_0] \times \R^3$ with $r_- \coloneqq 10 R$, $r_+ \coloneqq A_6 R / 10$, and $u$ replaced by the function
$$ (t,x) \mapsto \omega( - t, x )$$
(so that the hypothesis \eqref{lu} follows from the vorticity equation and \eqref{annu}) to conclude that
$$ Z' \lesssim \exp( - A_6^{1/2} R^2 / T_2 ) X' + \exp( \exp( A_6^{O(1)} ) ) Y'$$
where
$$ X' \coloneqq \int_{-T_2/C_0}^0 \int_{10 R \leq |x| \leq A_6 R / 10} e^{\frac{2|x|^2}{T_2}} (T_2^{-1} |\omega|^2 + |\nabla \omega|^2)\ dx dt$$
and
$$ Y' \coloneqq \int_{10 R \leq |x| \leq A_6 R / 10} |\omega(0, x)|^2\ dx$$
and
$$ Z' \coloneqq \int_{-T_2/4C_0}^0 \int_{100 R \leq |x| \leq A_1 R/20} T_2^{-1} |\omega|^2\ dx dt.$$
From \eqref{gauss-lower} (with $R$ replaced by $200R$) we have
$$ Z' \gtrsim  \exp( -A_5^{5} R^2 / T_2 ) T_2^{-1/2}.$$
Thus we either have
\begin{equation}\label{option-1}
X' \gtrsim  \exp( A_6^{1/3} R^2 / T_2 ) T_2^{-1/2}
\end{equation}
or
\begin{equation}\label{option-2}
Y' \gtrsim \exp(- \exp(A_6^{O(1)}) ) T_2^{-1/2}.
\end{equation}
Suppose for the moment that \eqref{option-1} holds.  From the pigeonhole principle, we can then find a scale
\begin{equation}\label{rb}
 10R \leq R' \leq A_6 R/10
\end{equation}
such that
$$ \int_{-T_2/C_0}^0 \int_{R' \leq |x| \leq 2R'} e^{\frac{2|x|^2}{T_2}} (T_2^{-1} |\omega|^2 + |\nabla \omega|^2)\ dx dt \gtrsim
\exp( A_6^{1/4} R^2 / T_2 ) T_2^{-1/2}$$
and thus
$$ \int_{-T_2/C_0}^0 \int_{R' \leq |x| \leq 2R'} (T_2^{-1} |\omega|^2 + |\nabla \omega|^2)\ dx dt \gtrsim
\exp( - 10 (R')^2 / T_2 ) T_2^{-1/2}$$
From \eqref{annu} we see that the contribution to the left-hand side arising from those times $t$ in the interval $[-\exp( - 20 (R')^2 / T_2 ) T_2,0]$ is negligible, thus
$$ \int_{-T_2/C_0}^{-\exp( - 20 (R')^2 / T_2 ) T_2} \int_{R' \leq |x| \leq 2R'} (T_2^{-1} |\omega|^2 + |\nabla \omega|^2)\ dx dt \gtrsim \exp( - 10 (R')^2 / T_2 ) T_2^{-1/2}.
$$
Thus by a further application of the pigeonhole principle, one can locate a time scale
\begin{equation}\label{tang}
 \exp( - 20 (R')^2 / T_2 ) T_2 \leq t_0 \leq T_2/C_0
\end{equation}
such that
$$ \int_{-2t_0}^{-t_0} \int_{R' \leq |x| \leq 2R'} (T_2^{-1} |\omega|^2 + |\nabla \omega|^2)\ dx dt \gtrsim \exp( - 10 (R')^2 / T_2 ) T_2^{-1/2}.$$
Covering the annulus $R' \leq |x| \leq 2R'$ by $O( \exp( O( (R')^2/T_2) )$ balls of radius $t_0^{1/2}$, one can then find $x_*$ with $R' \leq |x_*| \leq 2R'$ such that
\begin{equation}\label{cor}
 \int_{-2t_0}^{-t_0} \int_{B( x_*, t_0^{1/2})} (T_2^{-1} |\omega|^2 + |\nabla \omega|^2)\ dx dt \gtrsim \exp( - O( (R')^2 / T_2 ) ) T_2^{-1/2}.
\end{equation}

Now we apply Proposition \ref{carl-second} on the slab $[0,1000 t_0] \times \R^3$ with $r \coloneqq C_0^{1/4} (t_0/T_2)^{1/2} R' \leq |x_*|/10$, $t_1 \coloneqq t_0$, and $u$ replaced by the function
$$ (t,x) \mapsto \omega( -t, x_* + x )$$
(so that the hypothesis \eqref{lu} follows from the vorticity equation and \eqref{hawt}) to conclude that
\begin{equation}\label{zpp}
 Z'' \lesssim \exp( - C_0^{1/2} \frac{(R')^2}{500 T_2} ) X'' + t_0^{3/2} \exp( O( C_0^{1/2} (R')^2/T_2 ) ) Y'' 
\end{equation}
where
$$ X'' \coloneqq \int_{-T_2}^0 \int_{B(x_*,|x_*|/2)} (t_0^{-1} |\omega|^2 + |\nabla \omega|^2)\ dx dt$$
and
$$ Y'' \coloneqq t_0^{-3/2} \int_{B(x_*,|x_*|/2)} |\omega(0,x)|^2 e^{-|x-x'|^2 / 4 t_0}\ dx$$
and
$$ Z'' \coloneqq \int_{-2t_0}^{[-t_0} \int_{B(x_*, t_0^{1/2})} (t_0^{-1} |\omega|^2 + |\nabla \omega|^2) e^{-|x-x_*|^2/4|t|}\ dx dt.$$
From \eqref{cor}, \eqref{tang} one has
$$ Z'' \gtrsim \exp( - O( (R')^2 / T_2 ) ) T_2^{-1/2}.$$
From \eqref{annu}, \eqref{tang} one has
$$ X'' \lesssim T_2^{-1} t_0^{-1} (R')^3 \lesssim \exp( O( (R')^2/T_2 ) ) T_2^{-1/2}.$$
As $C_0$ is large, the first term on the right-hand side of \eqref{zpp} can thus be absorbed by the left-hand side, so we conclude that
$$ Y'' \gtrsim \exp( - O( C_0^{1/2} (R')^2 / T_2 ) ) T_2^{-2}$$
and hence
$$ \int_{R'/2 \leq |x| \leq 2R'} |\omega(0,x)|^2\ dx \gtrsim \exp( - O( C_0^{1/2} (R')^2 / T_2 ) ) T_2^{-2} t_0^{3/2}.$$
Using the bounds \eqref{rb}, \eqref{rb0}, \eqref{tang}, we conclude in particular that
\begin{equation}\label{tango}
 \int_{2 R \leq |x| \leq A_6 R / 2} |\omega(0, x)|^2\ dx \gtrsim \exp(- \exp(A_6^{O(1)}) ) T_2^{-1/2}.
\end{equation}
Note that this bound is also implied by \eqref{option-2}.  Thus we have unconditionally established \eqref{tango} for any scale $T_2$ obeying \eqref{scalo}, and for a suitable scale $R$ obeying \eqref{rb0} and the bounds \eqref{annu}.

We now convert this vorticity lower bound \eqref{tango} to a lower bound on the velocity.  The annulus $\{ 2R \leq |x| \leq A_6 R/2\}$ has volume $O( \exp( \exp( A_6^{O(1)})) T_2^{3/2} )$ by \eqref{rb0}, hence by the pigeonhole principle there exists a point $x_*$ in this annulus for which
$$ |\omega(0, x_*)| \gtrsim \exp(- \exp(A_6^{O(1)}) ) T_2^{-1}.$$
Comparing this with \eqref{annu}, we see that 
$$ |\int_{\R^3} \omega(0, x_* - r y) \varphi(y)\ dy| \gtrsim \exp(- \exp(A_6^{O(1)}) ) T_2^{-1}  $$
for some bump function $\varphi$ supported on $B(0,1)$, where $r$ is a radius of the form $r = \exp(- \exp(A_6^{O(1)}) ) T_2^{1/2}$.  Writing $\omega = \nabla \times u$ and integrating by parts, we conclude that
$$ |\int_{\R^3} u(0, x_* - r y) \nabla \times \varphi(y)\ dy| \gtrsim \exp(- \exp(A_6^{O(1)}) ) T_2^{-1/2}$$
and hence by H\"older's inequality 
$$ \int_{B(0,1)} |u(0, x_* - r y)|^3\ dy \gtrsim \exp(- \exp(A_6^{O(1)}) ) T_2^{-3/2}$$
or equivalently
$$ \int_{B(x_*, r)} |u(0,x)|^3\ dx \gtrsim \exp(- \exp(A_6^{O(1)}) ).$$
We conclude that for any scale $T_2$ obeying \eqref{scalo}, we have
$$ \int_{T_2^{1/2} \leq |x| \leq \exp( A_7) T_2^{1/2}} |u(0,x)|^3\ dx \gtrsim \exp(- \exp(A_6^{O(1)}) ).$$
Summing over a set of such scales $T_2$ increasing geometrically at ratio $\exp( A_7)$, we conclude that if
$T \geq A_4^{2} N_0^{-2}$, then
$$ \int_{\R^3} |u(0,x)|^3\ dx \gtrsim \exp(- \exp(A_6^{O(1)}) ) \log(T N_0^2).$$
Comparing this with \eqref{able}, one obtains the claim.
\end{proof}

\section{Applications}\label{final-sec}

Using the main estimate, we now prove the theorems claimed in the introduction.

We begin with Theorem \ref{main-1}.  By increasing $A$ as necessary we may assume that $A \geq C_0$, so that Theorem \ref{main-est} applies.  By rescaling it suffices to establish the claim when $t=1$, so that $T \geq 1$.  Applying Theorem \ref{main-est} in the contrapositive, we see that
\begin{equation}\label{piano}
 \| P_{N} u \|_{L^\infty_t L^\infty_x([1/2, 1] \times \R^3)} \leq A_1^{-1} N
\end{equation}
whenever $N \geq N_*$, where
$$ N_* \coloneqq \exp(\exp(\exp(A^{C_0^7}))).$$
We now insert this bound into the energy method.  As before, we split $u = u^{\lin} + u^{\nl}$ on $[1/2,1] \times \R^3$, where 
$$ u^{\lin}(t) \coloneqq e^{t\Delta} u(0)$$
and $u^{\nl} \coloneqq u - u^{\lin}$, and similarly split $\omega = \omega^{\lin} + \omega^{\nl}$.  From \eqref{bern-3}, \eqref{able} we have
\begin{equation}\label{ob}
 \| \nabla^j u^{\lin} \|_{L^\infty_t L^p_x([1/2, 1] \times \R^3)} \lesssim_j A
\end{equation}
for all $j \geq 0$ and $3 \leq p \leq \infty$.  We introduce the nonlinear enstrophy
$$ E(t) \coloneqq \frac{1}{2} \int_{\R^3} |\omega^{\nl}(t,x)|^2\ dx$$
for $t \in [1/2,1]$, and compute the time derivative $\partial_t E(t)$.  From the vorticity equation \eqref{omeganl} and integration by parts we have
\begin{equation}\label{enst}
\partial_t E(t) = -Y_1(t) + Y_2(t) + Y_3(t) + Y_4(t) + Y_5(t)
\end{equation}
where
\begin{align*}
Y_1(t) &= \int_{\R^3} |\nabla \omega^{\nl}(t,x)|^2\ dx \\
Y_2(t) &= - \int_{\R^3} \omega^{\nl} \cdot (u \cdot \nabla) \omega^{\lin}\ dx \\
Y_3(t) &= \int_{\R^3} \omega^{\nl} \cdot (\omega^{\nl} \cdot \nabla) u^{\nl}\ dx \\
Y_4(t) &= \int_{\R^3} \omega^{\nl} \cdot (\omega^{\nl} \cdot \nabla) u^{\lin}\ dx \\
Y_5(t) &= \int_{\R^3} \omega^{\nl} \cdot (\omega^{\lin} \cdot \nabla) u^{\nl}\ dx \\
Y_6(t) &= \int_{\R^3} \omega^{\nl} \cdot (\omega^{\lin} \cdot \nabla) u^{\lin}\ dx.
\end{align*}
From H\"older, \eqref{ob}, \eqref{able} we have
$$ Y_2(t), Y_6(t) \lesssim A^2 E(t)^{1/2} \lesssim A^4 + E(t)$$
and similarly
$$ Y_4(t), Y_5(t) \lesssim A E(t),$$
using Plancherel's theorem to control 
\begin{equation}\label{planch}
\| \nabla u^{\nl} \|_{L^2_x(\R^3)} \lesssim \| \omega^{\nl} \|_{L^2_x(\R^3)}.
\end{equation}
For $Y_3(t)$ we apply a Littlewood-Paley decomposition to all three factors to bound
$$ Y_3(t) \lesssim \sum_{N_1,N_2,N_3} \int_{\R^3} P_{N_1} \omega^{\nl} \cdot (P_{N_2} \omega^{\nl} \cdot \nabla) P_{N_3} u^{\nl}\ dx$$
where $N_1,N_2,N_3$ range over powers of two.  
The integral vanishes unless two of the $N_1,N_2,N_3$ are comparable to each other, and the third is less than or comparable to the other two.  Controlling the two highest frequency terms in $L^2_x$ and the lower one in $L^\infty_x$, and using the Littlewood-Paley localised version of \eqref{planch}, we conclude that
$$ Y_3(t) \lesssim \sum_{N_1,N_2,N_3: N_1 \sim N_2 \gtrsim N_3} \| P_{N_1} \omega^{\nl} \|_{L^2_x(\R^3)} \| P_{N_2} \omega^{\nl} \|_{L^2_x(\R^3)} \| P_{N_3} \omega^{\nl} \|_{L^\infty_x(\R^3)}.$$
From \eqref{able}, \eqref{bern}, the quantity $\| P_{N_3} \omega^{\nl} \|_{L^\infty_x(\R^3)}$ is bounded by $O( A N_3^2)$; for $N_3 \geq N_*$, \eqref{bern} we have the superior bound $O( A_1^{-1} N_3^2 )$.  We thus see that
$$ \sum_{N_3 \lesssim N_2} \| P_{N_3} \omega^{\nl} \|_{L^\infty_x(\R^3)} \lesssim A_1^{-1} N_2^2 + A N_*^2 $$
and thus by Cauchy-Schwarz
$$ Y_3(t) \lesssim \sum_{N_1} \| P_{N_1} \omega^{\nl} \|_{L^2_x(\R^3)}^2 (A_1^{-1} N_1^2 + A N_*^2).$$
On the other hand, from Plancherel's theorem we have
$$ Y_1(t) \sim \sum_{N_1} \| P_{N_1} \omega^{\nl} \|_{L^2_x(\R^3)}^2 N_1^2$$
and
$$ E(t) \sim \sum_{N_1} \| P_{N_1} \omega^{\nl} \|_{L^2_x(\R^3)}^2 $$
and hence
$$ Y_3(t) \lesssim A_1^{-1} Y_1(t) + A N_*^2 E(t).$$
Putting all this together, we conclude that
$$ \partial_t E(t) + Y_1(t) \lesssim A N_*^2 E(t) + A^4.$$
In particular, from Gronwall's inequality we have
$$ E(t_2) \lesssim E(t_1) + A^4 $$
whenever $1/2 \leq t_1 \leq t_2 \leq 1$ is such that $|t_2-t_1| \leq A^{-1} N_*^{-2}$.  On the other hand, from a (slightly rescaled) version of \eqref{122} we have
$$
\int_{1/2}^1 E(t)\ dt \lesssim A^4
$$
and hence on any time interval in $[1/2,1]$ of length $A^{-1} N_*^{-2}$ there is at least one time $t$ with $E(t) \lesssim A^5 N_*^2$.  We conclude that
$$ E(t) \lesssim A^5 N_*^2 \lesssim N_*^{O(1)}, $$
for all $t \in [3/4,1]$, which then also implies
$$ \int_{3/4}^1 Y_1(t) \lesssim N_*^{O(1)}.$$
  Iterating this as in the proof of Proposition \ref{basic}(iii) (or Proposition \ref{basic}(vi)), we now have the estimtes
$$ |u(t,x)|, |\nabla u(t,x)|, |\omega(t,x)|, |\nabla \omega(t,x)| \lesssim N_*^{O(1)}$$
on $[7/8,1] \times \R^3$.  This gives Theorem \ref{main-1}.

\begin{remark} More generally, one would expect in view of Theorem \ref{main-est} that any reasonable function space estimate obeyed by the linear heat equation with $L^3_x$ initial data will now also hold for classical solutions to Navier-Stokes obeying \eqref{u3}, but with an additional loss of $\exp\exp\exp(A^{O(1)})$ in the estimates.  It seems likely that a modification of the arguments above would be able to obtain such estimates, particularly if one replaces the linear estimates \eqref{very} (or \eqref{ob}) by more refined estimates that involve the profile of the initial data $u(0)$, and in particular on how the Littlewood-Paley components $\| P_N u(0) \|_{L^3_x(\R^3)}$ of the $L^3_x$ norm of that data vary with the frequency $N$.  We will not pursue this question further here.
\end{remark}

Now we prove Theorem \ref{main-2}.  We may rescale $T_* = 1$.  Let $c>0$ be a sufficiently small constant, and suppose for contradiction that
$$ \limsup_{t \to 1^+} \frac{\|u(t)\|_{L^3_x(\R^3)}}{(\log\log\log \frac{1}{1-t})^c} < +\infty,$$
thus we have
\begin{equation}\label{oil}
 \|u(t)\|_{L^3_x(\R^3)} \leq M (\log\log\log(1000 + \frac{1}{1-t}))^c
\end{equation}
for all $0 \leq t < 1$ and some constant $M$.  Applying Theorem \ref{main-1}, we obtain (for $c$ small enough) the bounds
\begin{equation}\label{water}
 \| u(t) \|_{L^\infty_x(\R^3)}, \| \nabla u(t) \|_{L^\infty_x(\R^3)}, \| \omega(t) \|_{L^\infty_x(\R^3)}, \| \nabla \omega(t) \|_{L^\infty_x(\R^3)} \lesssim_M (1-t)^{-1/10}
\end{equation}
(say) for all $1/2 \leq t < 1$.  In particular, $u$ is bounded in $L^2_t L^\infty_x$, contradicting the classical Prodi-Serrin-Ladyshenskaya blowup criterion \cite{prodi}, \cite{serrin}, \cite{lady-0}; one could also use the Beale-Kato-Majda criterion \cite{bkm} and \eqref{water} to obtain the required contradiction.  The claim follows.

\bibliographystyle{amsalpha}

\end{document}